\newcommand{\Text}[1]{\text{\textnormal{#1}}}
\newcommand{\ld}{\Text{ld}}
\newcommand{\Tr}{\Text{Tr}}
\title{Computing the log-determinant of symmetric, diagonally dominant matrices
in near-linear time}
\author{Timothy Hunter\thanks{Department of Electrical Engineering and Computer Sciences, University of California, Berkeley.} \and 
Ahmed {El Alaoui}$^{*}$ \and
Alexandre M. Bayen$^{*}$}
\newtheorem{theorem}{Theorem}
\newtheorem{definition}{Definition}
\newtheorem{lemma}{Lemma}
\newtheorem{proposition}{Proposition}
\newtheorem{corollary}{Corollary}
\begin{document}
\maketitle

\begin{abstract}
We present new algorithms for computing the log-determinant of symmetric,
diagonally dominant matrices. Existing algorithms run with cubic complexity
with respect to the size of the matrix in the worst case. Our algorithm 
computes an approximation of the log-determinant in time near-linear with 
respect to the number of non-zero entries and with high probability. This 
algorithm builds upon the utra-sparsifiers introduced by Spielman and Teng for
Laplacian matrices and ultimately uses their refined versions introduced by Koutis, Miller and Peng in the context of solving linear systems. We also present simpler algorithms that compute upper and 
lower bounds and that may be of more immediate practical interest.
\end{abstract}

\footnotetext{Under submission to the SIAM Journal on Computing}

\section{Introduction}

We consider the problem of computing the determinant of symmetric,
diagonally dominant (SDD) matrices, i.e. real symmetric matrices $A$
for which: 
\[
A_{ii}\geq\sum_{j\neq i}\left|A_{ij}\right|
\]
The set of all such matrices of size $n\times n$ is denoted $SDD_{n}$,
and the set of all symmetric real matrices is called $\mathcal{S}_{n}$.
Call $m$ the number of non-zero entries in $A$. We are interested
in computing the determinant of sparse matrices, i.e. matrices for
which $m\ll n^{2}$.

The best exact algorithm known for computing the determinant of general
matrices, the Cholesky factorization, runs in a cubic complexity $\mathcal{O}\left(n^{3}\right)$.
Computing the factorization can be sped up for a few specific patterns
such as trees, but no algorithm has been shown to work in a generic
way for $SDD_{n}$, let alone general symmetric matrices. We present
an algorithm that returns an approximation of the logarithm of the
determinant in time quasi-linear with the number of non-zero entries
of $A$. More specifically, we show that our algorithm, \texttt{UltraLogDet},
computes an $\epsilon$-approximation of the logarithm of the determinant
with high probability and in expected time%
\footnote{We use the notation $\tilde{\mathcal{O}}$ to hide a factor at most
$\left(\log\log n\right)^{8}$%
}: 
\[
\tilde{\mathcal{O}}\left(m\epsilon^{-2}\log^{3}n\log^{2}\left(\frac{n\kappa_{A}}{\epsilon}\right)\right)
\]

where $\kappa_{A}$ is the condition number of $A$. This algorithm
builds upon the work of Spielman and Teng on \emph{ultra-sparsifiers}
\cite{Spielman2009a}, and it critically exploits the recent improvements
from Koutis, Miller and Peng \cite{Koutis2010}. This is to our knowledge
the first algorithm that presents a nearly linear complexity which
depends neither on the condition number of $A$ (except through a
log-term) nor on a specific pattern for the non-zero coefficients
of $A$.

The high sophistication of the algorithm transpires through the large
exponent of $\log\log n$. However, our algorithm will directly benefit
from any improvement on ultra-sparsifiers. Given the considerable
practical importance of such preconditioners, we expect some fast
improvements in this area. Also, the bulk of the work is performed
in a Monte Carlo procedure that is straightforward to parallelize.
Furthermore, we also present simpler, non-optimal algorithms that
compute upper and lower bounds of the logarithm of the determinant,
and that may be of more immediate practical interest.

\subsection{Background}

There are two approaches in numerical linear algebra to approximately
compute a determinant (or the log of the determinant): by performing
a (partial) Cholesky factorization of $A$, or by considering the
trace of some power series.

As mentioned above, the Cholesky factorization performs a decomposition
of the form: $A=PLDL^{T}P^{T}$ with $P$ a permutation matrix, $L$
a low-triangular matrix with $1$ on the diagonal and $D$ a diagonal
matrix of non-negative coefficients. Then the log-determinant of $A$
is simply%
\footnote{We will use the $\left|\cdot\right|$ operator to denote the determinant,
it will be clear from the context that it is different from the absolute
value.%
}: 
\[
\log\left|A\right|=\sum_{i}\log D_{ii}
\]
The complexity of dense Cholesky factorization for dense matrices
is $\mathcal{O}\left(n^{3}\right)$. Unfortunately, Cholesky factorization
usually does not gain much from the knowledge of the sparsity pattern
due to the \emph{fill-in problem} (see \cite{meurant1999computer},
section 3.2). There is one case, though, for which Cholesky factorization
is efficient: if the sparsity pattern of $A$ is a tree, then performing
Cholesky factorization takes $\mathcal{O}\left(n\right)$ time, and
the matrix $L$ is a banded matrix \cite{liu1990eliminationtrees}.
If the sparsity pattern of $A$ is not a tree, however, this advantageous
decomposition does not hold anymore.

When the matrix $A$ is close to the identity, more precisely when
the spectral radius of $M=A-I$ is less than~$1$, one can use the
remarkable Martin expansion of the log-determinant \cite{martin1992approximations}:
\begin{equation}
\log\left|A\right|=\text{Tr}\left(\log A\right)\label{eq:martin-expansion}
\end{equation}
where $\log A$ is the matrix logarithm defined by the series expansion:
\begin{equation}
\log A=\sum_{i=0}^{\infty}\frac{\left(-1\right)^{i}}{i+1}M^{i}\label{eq:matrix-log}
\end{equation}
The determinant can then be computed by a sum of traces of the power
of $M$, and the rate of convergence of this series is driven by the
spectral radius $M$. This line of reasoning has led researchers to
look for decompositions of $A$ of the form $A=U+V$ with the determinant
of $U$ being easier to compute and $U^{-1}V+I$ having a small spectral
radius. Then $\log\left|A\right|=\log\left|U\right|+\log\left|U^{-1}V+I\right|$.
The most common decomposition $U,V$ is in terms of block diagonal
and off-diagonal terms, which can then use Hadamard inequalities on
the determinant to bound the error \cite{Ipsen2006}. Diagonal blocks
also have the advantage of having determinants easy to compute. However,
this approach requires some strong assumptions on the condition number
of $A$, which may not hold in practice.

The trace approach is driven by \emph{spectral properties }(the condition
number) while the Cholesky approach is driven by \emph{graphical }properties\emph{
}(the non-zero pattern)\emph{. }We\emph{ }propose to combine these
two approaches by decomposing the problem with one component that
is close to a tree (and is more amenable to Cholesky methods), and
one component that has a bounded condition number. Our solution is
to use a \emph{spectral sparsifier} introduced by Spielman in \cite{Spielman2008}.

\subsection{Applications}

The problem of estimating determinants has important applications
in spatial data analysis, statistical physics and statistics. In spatial
statistics, it is often convenient to interpolate measurements in
a 2-, 3- or 4-dimensional volume using a sparse Gaussian process,
a technique known in the geospatial community as \emph{kriging }\cite{zhang2010kriging,li2005analysis}\emph{.
}Computing the optimal parameters of this Gaussian process involves
repeated evaluations of the partition function, which is a log-determinant.
In this context, a diagonally dominant matrix for the Gram matrix
of the process corresponds to distant interactions between points
of measure (which is verified in some contexts, see \cite{KelleyPace1997291}).
Determinants also play a crucial role in quantum physics and in theoretical
physics. The wave function of a system of multiple fermion particles
is an antisymmetric function which can be described as a determinant
(Slatter determinant, \cite{atkins2011molecular,lowdin1955quantum}).
In the theory of quantum chromodynamics (QCD), the interaction between
particles can be discretized on a lattice, and the energy level of
particles is the determinant of some functional operators over this
lattice \cite{duncan1998efficient}. It is itself a very complex problem
because of the size of the matrices involved for any non-trivial problem,
for which the number of variables is typically in the millions \cite{bernardson1994monte}.
In this setting, the restriction to diagonally dominant matrices can
be interpreted as an interaction between relatively massive particles
\cite{deForcrand1989516}, or as a bound on the propagation of interactions
between sites in the lattice \cite{bernardson1994monte}.

For these reasons, computing estimates of the log-determinant has
been an active problem in physics and statistics. In particular, the
Martin expansion presented in Equation \eqref{eq:martin-expansion}
is extensively used in quantum physics \cite{Ipsen2006}, and it can
be combined with sampling method to estimate the trace of a matrix
series (\cite{Zhang2008},\cite{McCourt2008},\cite{Zhang2007}).
Another different line of research has worked on bounds on the values
of the determinant itself. This is deeply connected to simplifying
statistical models using variational methods. Such a relaxation using
a message-passing technique is presented in \cite{Wainwright2006}.
Our method is close in spirit to Reuksen's work \cite{Reusken2002}
by the use of a preconditioner. However, Reuksen considers preconditioners
based on a clever approximation of the Cholesky decomposition, and
its interaction with the eigenvalues of the complete matrix is not
well understood. Using simpler methods based on sampling, we are able
to carefully control the spectrum of the remainder, which in turn
leads to strong convergence guarantees.

\subsection{A note on scaling}

Unlike other common characteristics of linear operators, the determinant
and the log-determinant are very sensitive to dimensionality. We will
follow the approach of Reuksen \cite{Reusken2002} and consider the
\emph{regularized log-determinant} $f\left(A\right)=n^{-1}\log\left|A\right|$
instead of the log-determinant. The regularized determinant has appealing
properties with respect to dimensionality. In particular, its sensitivity
to perturbations does not increase with the dimensionality, but only
depends on spectral properties of the operator $A$. For example,
calling $\lambda_{\min}$ and $\lambda_{\max}$ the minimum and maximum
eigenvalues of $A$, respectively: 
\[
\log\lambda_{\min}\leq f\left(A\right)\leq\log\lambda_{\max}
\]
\[
\left|f\left(A+\epsilon I\right)-f\left(A\right)\right|\leq\epsilon\left\Vert A^{-1}\right\Vert _{2}+\mathcal{O}\left(\epsilon^{2}\right)
\]
The last inequality in particular shows that any perturbation to $\log\left|A\right|$
will be in the order $\mathcal{O}\left(n\right)$, and so that all
the interesting log-determinants in practice will be dominated by
some $\mathcal{O}\left(n\right)$.

\subsection{Main results}

We first present some general results about the preconditioning of
determinants. Consider $A\in SDD_{n}$ invertible, and some other
matrix $B\in SDD_{n}$ that is close to $A$ in the spectral sense.
All the results of this article stem from observing that: 
\begin{eqnarray*}
\log\left|A\right| & = & \log\left|B\right|+\log\left|B^{-1}A\right|\\
 &  & \log\left|B\right|+\text{Tr}\left(\log\left(B^{-1}A\right)\right)
\end{eqnarray*}
The first section is concerned with estimating the remainder term
$\text{Tr}\left(\log\left(B^{-1}A\right)\right)$ using the Martin
expansion. The exact inverse $B^{-1}$ is usually not available, but
we are given instead a linear operator $C$ that is an $\epsilon-$approximation
of $B^{-1}$, for example using a conjugate gradient method. We show
in Section \ref{sec:Preconditioned-log-determinants} that if the
precision of this approximation is high enough, we can estimate the
remainder with high probability and with a reasonable number of calls
to the operator $C$ (this sentence will be made precise in the rather
technical Theorem \ref{thm:preconditioning-approx}). Using this general
framework, the subsequent Section \ref{sec:Making-the-problem} shows
that spectral sparsifiers make excellent preconditioners that are
close enough to $A$ and so that computing the Martin expansion is
not too expansive. In particular, we build upon the recursive structure
of Spielman-Teng ultra-sparsifiers to obtain our main result: 
\begin{theorem}
\label{thm:ultra_main}On input $A\in SDD_{n}$ with $m$ non-zeros,
$\eta>0$, the algorithm \texttt{UltraLogDet} returns a scalar $z$
so that: 
\[
\mathbb{P}\left[\left|z-n^{-1}\log\left|A\right|\right|>\epsilon\right]\leq\eta
\]
and this algorithm completes in expected time $\tilde{O}\left(m\epsilon^{-2}\log^{3}n\log^{2}\left(\frac{\kappa_{A}}{\epsilon}\right)\log\left(\eta^{-1}\right)\right)$.
Moreover, if $\epsilon>\Omega(n^{-1})$, then the running time improves
by a factor $\epsilon$. 
\end{theorem}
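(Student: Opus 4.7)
The plan is to combine the preconditioning identity from the ``Main results'' subsection with the recursive ultra-sparsifier chain of Koutis--Miller--Peng \cite{Koutis2010}. At a single level, given $A \in SDD_{n}$ I would pick a spectral sparsifier $B \in SDD_{n}$ whose relative condition number $\kappa(B^{-1/2}AB^{-1/2})$ is a small constant, so that Theorem~\ref{thm:preconditioning-approx} applies with a truncation of the Martin expansion \eqref{eq:matrix-log} of length only $O(\log(\kappa_{A}/\epsilon))$. The cost of this level splits into two pieces: first, estimating $n^{-1}\Tr(\log(B^{-1}A))$ via a Hutchinson-type trace estimator with $O(\epsilon^{-2}\log(\eta^{-1}))$ random probe vectors, each requiring $O(\log(\kappa_{A}/\epsilon))$ KMP solves against $B$ with the preconditioner $C \approx B^{-1}$ supplied by \cite{Koutis2010}; second, a recursive call to compute $\log|B|$ itself.

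For the recursion I would exploit that the KMP ultra-sparsifier of $B$ is a spanning tree plus $n/\text{polylog}(n)$ extra edges, so that partial Cholesky elimination of the tree part (and bookkeeping of the eliminated diagonal in the log-determinant) leaves a residual SDD system whose size shrinks geometrically. Iterating for $L = \Theta(\log n)$ levels yields a chain $A = A_{0}, A_{1}, \ldots, A_{L}$ whose last matrix is small enough (or sufficiently tree-like) to admit an exact log-determinant computation in near-linear time, using the fact, recalled in the background, that tree-structured Cholesky takes $\mathcal{O}(n)$. The total running time is then a geometric sum of per-level costs; assembling the per-level sampling cost $\tilde{\mathcal{O}}(m \epsilon^{-2} \log^{2} n)$, the $\log(\kappa_{A}/\epsilon)$ factor from Martin truncation, the $\log(\kappa_{A}/\epsilon)$ factor internal to each KMP solve, and the $\log n$ recursion depth produces the claimed $\tilde{\mathcal{O}}(m\epsilon^{-2}\log^{3} n \log^{2}(\kappa_{A}/\epsilon)\log(\eta^{-1}))$.

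Correctness follows from a union bound plus triangle-inequality aggregation: at each of the $L$ levels I set the per-level failure probability to $\eta/L$ and the per-level target error to $\epsilon/L$, so that the cumulative deviation of the final estimate of $n^{-1}\log|A|$ from the true value is at most $\epsilon$ with probability at least $1 - \eta$. The $L = \text{polylog}(n)$ factors introduced by this rescaling are absorbed into $\tilde{\mathcal{O}}$. The ``moreover'' clause, giving an improvement by a factor $\epsilon$ when $\epsilon = \Omega(n^{-1})$, would follow from a sharper Hutchinson variance bound: for accuracies coarser than the natural $1/n$ fluctuation of the regularized trace, fewer samples per level suffice, saving one factor of $\epsilon$.

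The main obstacle will be the error and spectral bookkeeping across the recursive chain. Each level contributes three sources of error --- truncation of the Martin series, Hutchinson variance, and the mismatch between $C$ and the true $B^{-1}$ --- and Theorem~\ref{thm:preconditioning-approx} must be invoked with internal precision parameters chosen so that these compose additively rather than multiplicatively over the $L$ levels, while keeping the per-level sample complexity at $\tilde{\mathcal{O}}(\epsilon^{-2})$. The crux is that the KMP construction keeps $\kappa(A_{i}^{-1}A_{i-1}) = \mathcal{O}(1)$ \emph{uniformly} along the chain, so that the spectral radius driving Martin convergence stays bounded and the per-level error budget never has to be sharpened beyond a polylogarithmic factor; verifying that the ultra-sparsifier guarantees survive the combination with partial Cholesky and with the approximate-inverse oracle $C$ is where the bulk of the technical work lies.
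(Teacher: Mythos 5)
Your proposal follows the same route as the paper: reduce to a Laplacian, build the Koutis--Miller--Peng chain, do partial Cholesky (\texttt{GreedyElimination}) at each level, telescope $\log|A|$ into the eliminated diagonal terms plus remainders $\ld\left(B_{i}^{+}A_{i}\right)$, estimate each remainder with \texttt{PreconditionedLogDetMonteCarlo} via Theorem~\ref{thm:preconditioning-approx} using the near-linear solver as the approximate inverse, and union-bound over the $\mathcal{O}\left(\log n\right)$ levels. The difficulty is that the step you explicitly defer (``where the bulk of the technical work lies'') is precisely the step the paper's proof has to supply, and your stated crux --- that keeping the \emph{relative} condition numbers $\kappa_{i}$ of the chain at $\mathcal{O}\left(1\right)$ means the per-level budgets never sharpen beyond polylog --- is not sufficient. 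Theorem~\ref{thm:preconditioning-approx} requires the oracle precision $\nu\leq\epsilon/\left(8\kappa^{3}\kappa\left(B_{i}\right)\right)$, which involves the \emph{absolute} condition number $\kappa\left(B_{i}\right)$ of the preconditioner, and nothing in the chain definition keeps that bounded. The paper must argue, via monotonicity of the condition number under partial Cholesky (the Schur complement bound from \cite{Zhang2005}), that $\kappa\left(A_{i+1}\right)\leq\kappa\left(B_{i}\right)\leq\prod_{j\leq i}\kappa_{j}\,\kappa\left(A\right)=\tilde{O}\left(n^{\log\kappa_{c}}\log^{2}n\,\kappa\left(A\right)\right)$, i.e.\ polynomially large in $n$, and then observe that this blow-up only enters through $\log\left(1/\nu\right)$ in the solver cost of Lemma~\ref{lem:linear-precond-existence}; that is exactly what produces the $\log^{2}\left(\frac{n\kappa_{A}}{\epsilon}\right)$ factor. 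Without this (or some substitute control of $\kappa\left(B_{i}\right)$ down the chain) your attribution of ``one $\log\left(\kappa_{A}/\epsilon\right)$ factor internal to each KMP solve'' is unsupported.

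Three smaller points of bookkeeping. First, the chain does not give a constant relative condition number at the top level: $\kappa_{1}=\tilde{O}\left(\log^{2}n\right)$ and only $\kappa_{i}=\kappa_{c}$ for $i\geq2$; since the number of Monte Carlo matrix--vector products in Theorem~\ref{thm:preconditioning-approx} scales linearly with $\kappa$, the $\log^{3}n$ in the final bound comes from $\kappa_{1}\cdot\left(m\log n\text{ per solve}\right)$ at the first level (later levels shrink geometrically), not from multiplying a per-level $\log^{2}n$ by the recursion depth. Second, splitting the budget as $\epsilon/L$ and $\eta/L$ per level inflates the sample complexity by $L^{2}=\log^{2}n$, which is \emph{not} absorbed by the paper's $\tilde{O}$ (it hides only $\left(\log\log n\right)^{8}$ factors); the paper keeps per-level precision $\epsilon$ and only union-bounds the failure probabilities. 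Third, the ``moreover'' clause needs no sharper variance bound: it is already contained in the $\left(\frac{1}{\epsilon}+\frac{1}{n\epsilon^{2}}\right)$ sample complexity of Theorem~\ref{thm:det-sampling-theorem}, since the $\frac{1}{n\epsilon^{2}}$ term is dominated by $\frac{1}{\epsilon}$ once $\epsilon\geq\Omega\left(n^{-1}\right)$.
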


The rest of the article is structured as follows. In the next section,
we present some results about estimating the log-determinant from
a truncated expansion. These results will justify the use of \emph{preconditioners
}to compute the determinant of a matrix. The techniques developed
by Spielman et al. work on the Laplacians of weighted graphs. Section
3 introduces some new concepts to expand the notion of determinants
to Laplacian matrices, and presents a few straightforward results
in the relations between graph Laplacians and SDD matrices. Section
3.2 will use these new concepts to introduce a first family of preconditioners
based on low-stretch spanning trees. Finally, Section 3.3 contains
the proof of our main result, an algorithm to compute determinants
in near-linear time.

\section{Preconditioned log-determinants\label{sec:Preconditioned-log-determinants}}

We begin by a close inspection of a simple sampling algorithm to compute
log-determinants, presented first in \cite{Barry1999}. We will first
present some error bounds on this algorithm that expand on bounds
previously presented in \cite{Bai1996} and \cite{Barry1999}. This
section considers general symmetric matrices and does not make assumptions
about diagonal dominance.

Consider a real symmetric matrix $S\in\mathcal{S}_{n}^{+}$ such that
its spectral radius is less than $1$: $0\preceq S\preceq\left(1-\delta\right)I$
for some $\delta\in\left(0,1\right)$. Our goal is to compute $\log\left|I-S\right|$
up to precision $\epsilon$ and with high probability. From the Martin
expansion: 
\begin{equation}
\log\left|I-S\right|=-\Tr\left(\sum_{k=1}^{\infty}\frac{1}{k}S^{k}\right)\label{eq:martin}
\end{equation}

This series of traces can be estimated by Monte Carlo sampling, up
to precision $\epsilon$ with high probability, by truncating the
series and by replacing the exact trace evaluation by $x^{T}S^{k}x$
for some suitably chosen random variables $x$. In order to bound
the errors, we will bound the large deviation errors using the following
Bernstein inequality: 
\begin{lemma}
{[}Bernstein's inequality{]} \label{lem:bernstein} Let $X_{1}\cdots X_{n}$
be independent random variables with $\mathbb{E}\left[X_{i}\right]=0$,
$\left|X_{i}\right|<c$ almost surely. Call $\sigma^{2}=\frac{1}{n}\sum_{i}\text{\textnormal{Var}}\left(X_{i}\right)$,
then for all $\epsilon>0$: 
\[
\mathbb{P}\left[\frac{1}{n}\Big|\sum_{i}X_{i}\Big|\geq\epsilon\right]\leq2\exp\left(-\frac{n\epsilon^{2}}{2\sigma^{2}+2c\epsilon/3}\right)
\]

\end{lemma}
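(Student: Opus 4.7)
The plan is to prove Lemma~\ref{lem:bernstein} by the classical Chernoff--Markov (moment generating function) route: first derive a one-sided tail bound on $S_n := \sum_{i=1}^{n} X_i$, then apply the same argument to $-X_i$ and union-bound to obtain the two-sided statement.

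First I would write, for any $t>0$, the exponential Markov inequality
$$\mathbb{P}\!\left[S_n \geq n\epsilon\right] \leq e^{-tn\epsilon}\,\mathbb{E}\!\left[e^{tS_n}\right] = e^{-tn\epsilon}\prod_{i=1}^{n}\mathbb{E}\!\left[e^{tX_i}\right],$$
using independence in the second step. The main technical step is to control each MGF. Since $\mathbb{E}[X_i]=0$, Taylor expansion gives $\mathbb{E}[e^{tX_i}] = 1 + \sum_{k \geq 2} \frac{t^{k}}{k!}\mathbb{E}[X_i^{k}]$. The almost-sure bound $|X_i|\leq c$ yields $|X_i^{k}| \leq c^{k-2} X_i^{2}$, hence $\mathbb{E}[X_i^{k}] \leq c^{k-2}\text{Var}(X_i)$. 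Summing the resulting series and using $1+x \leq e^{x}$ leads to the cumulant bound
$$\mathbb{E}\!\left[e^{tX_i}\right] \leq \exp\!\left(\frac{\text{Var}(X_i)}{c^{2}}\,(e^{tc}-1-tc)\right).$$

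Multiplying across $i$ and recalling the definition $\sigma^{2} = \frac{1}{n}\sum_i \text{Var}(X_i)$, one obtains $\prod_i \mathbb{E}[e^{tX_i}] \leq \exp\!\left(\tfrac{n\sigma^{2}}{c^{2}}(e^{tc}-1-tc)\right)$. Next I would optimize over $t$ by invoking the elementary inequality $e^{x} - 1 - x \leq \frac{x^{2}/2}{1 - x/3}$ valid for $0 \leq x < 3$; substituting $x = tc$ and choosing $t = \epsilon/(\sigma^{2} + c\epsilon/3)$ (which lies in the valid range) produces, after simplification, the tail exponent $-\frac{n\epsilon^{2}}{2\sigma^{2} + 2c\epsilon/3}$. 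Applying the identical argument to $-X_i$ and taking a union bound gives the two-sided inequality, with the factor $2$ in the statement coming precisely from this symmetrization.

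The conceptual content here is standard and presents no real obstacle; the delicate point is purely arithmetic, namely choosing the bound on $e^{tc}-1-tc$ and the optimizer $t$ so that the resulting constants $2$ and $2/3$ in the denominator match the statement exactly. Any clean Bernstein-style bound on the cumulant generating function (the rational bound used above, or equivalently a derivation through the Bennett function $h(u) = (1+u)\log(1+u) - u$) produces the same final constants, so the choice is a matter of convenience rather than depth.
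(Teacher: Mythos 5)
Your proposal is correct: the Chernoff--Markov route with the cumulant bound $\mathbb{E}[e^{tX_i}]\leq\exp\bigl(\tfrac{\mathrm{Var}(X_i)}{c^{2}}(e^{tc}-1-tc)\bigr)$, the elementary estimate $e^{x}-1-x\leq\frac{x^{2}/2}{1-x/3}$ for $0\leq x<3$, and the choice $t=\epsilon/(\sigma^{2}+c\epsilon/3)$ (which indeed satisfies $tc<3$) reproduce exactly the exponent $-\frac{n\epsilon^{2}}{2\sigma^{2}+2c\epsilon/3}$, and the symmetrization supplies the factor $2$. Note that the paper itself gives no proof of this lemma --- it is invoked as the classical Bernstein inequality --- so there is no internal argument to compare against; your derivation is the standard one and the constants match the statement.
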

We can adapt some results from \cite{Barry1999} to prove this bound
on the deviation from the trace. 
\begin{lemma}
\label{lem:bernstein-trace}Consider $H\in\mathcal{S}_{n}$ with the
assumption $\lambda_{\text{\textnormal{min}}}I_{n}\preceq H\preceq\lambda_{\text{\textnormal{max}}}I$.
Consider $p$ vectors sampled from the standard Normal distribution:
$\mathbf{u}_{i}\sim\mathcal{N}\left(\mathbf{0},I_{n}\right)$ for
$i=1\cdots p$. Then for all $\epsilon>0$: 
\[
\mathbb{P}\left[\left|\frac{1}{p}\sum_{i=1}^{p}\frac{\mathbf{u}_{i}^{T}H\mathbf{u}_{i}}{\mathbf{u}_{i}^{T}\mathbf{u}_{i}}-\frac{1}{n}\Tr\left(H\right)\right|\geq\epsilon\right] \leq 2\exp\left(-\frac{p\epsilon^{2}}{4\frac{\left(\lambda_{\max}-\lambda_{\min}\right)^{2}}{n}+2\frac{\left(\lambda_{\max}-\lambda_{\min}\right)\epsilon}{3}}\right)
\]
\end{lemma}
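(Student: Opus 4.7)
The plan is to apply Bernstein's inequality (Lemma \ref{lem:bernstein}) to the i.i.d.\ centered random variables
\[
X_i \;:=\; \frac{\mathbf{u}_i^T H \mathbf{u}_i}{\mathbf{u}_i^T \mathbf{u}_i} - \frac{1}{n}\Tr(H).
\]
What needs to be supplied are (i) that $\mathbb{E}[X_i] = 0$, (ii) a deterministic bound $|X_i|\le c$, and (iii) a variance bound $\sigma^2 \ge \mathrm{Var}(X_i)$. Matching the exponent in the statement, the target constants are $c = \lambda_{\max}-\lambda_{\min}$ and $\sigma^2 = 2(\lambda_{\max}-\lambda_{\min})^2/n$.

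By the rotational invariance of the standard Gaussian, $v_i := \mathbf{u}_i/\|\mathbf{u}_i\|$ is uniformly distributed on the unit sphere $S^{n-1}$ (and independent of the norm), so $\mathbf{u}_i^T H \mathbf{u}_i / \mathbf{u}_i^T \mathbf{u}_i = v_i^T H v_i$. Rotational invariance also gives $\mathbb{E}[v_i v_i^T] = \tfrac{1}{n} I_n$, from which $\mathbb{E}[v_i^T H v_i] = \Tr(H \mathbb{E}[v_i v_i^T]) = \tfrac{1}{n}\Tr(H)$, so the $X_i$ are centered. Since $\|v_i\|=1$, the Rayleigh quotient is sandwiched as $\lambda_{\min} \le v_i^T H v_i \le \lambda_{\max}$, and the same bounds hold for $\tfrac{1}{n}\Tr(H)$, giving $|X_i| \le \lambda_{\max}-\lambda_{\min}$.

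The main technical step is the variance bound. After diagonalizing (the distribution of $v^T H v$ depends only on the eigenvalues $\lambda_1,\dots,\lambda_n$ of $H$, by rotational invariance), write $v^T H v = \sum_k \lambda_k v_k^2$ where $v$ is uniform on the sphere. Using the standard spherical moments $\mathbb{E}[v_k^4] = 3/[n(n+2)]$ and $\mathbb{E}[v_k^2 v_\ell^2] = 1/[n(n+2)]$ for $k\ne\ell$ (which follow from $v_k^2 \sim \mathrm{Beta}(1/2,(n-1)/2)$, or equivalently from normalizing Gaussians), a direct expansion of $\mathbb{E}[(v^T H v)^2] - (\mathbb{E}[v^T H v])^2$ collapses to
\[
\mathrm{Var}(v^T H v) \;=\; \frac{2}{n(n+2)}\left[\,\sum_k \lambda_k^2 - \frac{1}{n}\Big(\sum_k \lambda_k\Big)^2\,\right].
\]
The bracketed quantity is $n$ times the empirical variance of the $\lambda_k$'s, so Popoviciu's inequality bounds it by $\tfrac{n}{4}(\lambda_{\max}-\lambda_{\min})^2$, yielding $\mathrm{Var}(X_i) \le (\lambda_{\max}-\lambda_{\min})^2/[2(n+2)]$, which is well below the target $2(\lambda_{\max}-\lambda_{\min})^2/n$.

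Plugging $c = \lambda_{\max}-\lambda_{\min}$ and $\sigma^2 = 2(\lambda_{\max}-\lambda_{\min})^2/n$ into Lemma \ref{lem:bernstein} with sample size $p$ yields exactly the exponent displayed in the lemma. The only non-bookkeeping step is the fourth-moment computation on the sphere; everything else is a matter of assembling the ingredients. I would therefore either cite the spherical moments as standard or derive them in a short auxiliary computation before invoking Bernstein.
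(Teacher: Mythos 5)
Your proposal is correct and follows essentially the same route as the paper: center the Rayleigh-quotient estimators, bound them by $\lambda_{\max}-\lambda_{\min}$, control their variance, and invoke the Bernstein inequality of Lemma \ref{lem:bernstein}. The only difference is cosmetic — the paper cites the centering, boundedness, and the variance formula $\frac{2}{n(n+2)}\sum_k\left(\lambda_k-n^{-1}\Tr(H)\right)^2$ from Barry and Pace, whereas you rederive them from spherical fourth moments and sharpen the variance via Popoviciu before relaxing back to the same $\sigma^2=2(\lambda_{\max}-\lambda_{\min})^2/n$ used in the stated exponent.
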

\begin{proof}
The distribution of $\mathbf{u}_{i}$ is invariant through a rotation,
so we can consider $H$ diagonal. We assume without loss of generality
that $H=\text{diag}\left(\lambda_{1},\cdots,\lambda_{n}\right)$.
Again without loss of generality, we assume that $\lambda'_{\max}=\lambda_{\max}-\lambda_{\min}$
and $\lambda'_{\min}=0$ (by considering $H'=H-\lambda_{\min}I$).
Call $V_{i}=\frac{\mathbf{u}_{i}^{T}H\mathbf{u}_{i}}{\mathbf{u}_{i}^{T}\mathbf{u}_{i}}-n^{-1}\text{Tr}\left(H\right)$.
Using results from \cite{Barry1999}, we have: $\left|V_{i}\right|\leq\lambda_{\max}-\lambda_{\min}$,
$\mathbb{E}\left[V_{i}\right]=0$ and 
\[
\text{Var}\left(V_{i}\right)=\frac{2}{n(n+2)}\sum_{i=1}^{n}\left(\lambda_{i}-n^{-1}\text{Tr}\left(H\right)\right)^{2}
\]

Each of the variables $V_{i}$ is independent, 
so invoking Lemma \ref{lem:bernstein} gives: 
\[
\mathbb{P}\left[\frac{1}{p}\left|\sum_{i=1}^{p}V_{i}\right|\geq\epsilon\right]\leq2\exp\left(-\frac{p\epsilon^{2}}{2\sigma^{2}+2\left(\lambda_{\max}-\lambda_{\min}\right)\epsilon/3}\right)
\]
with 
\begin{align*}
\sigma^{2} & =\frac{2}{n(n+2)}\sum_{i=1}^{n}\left(\lambda_{i}-n^{-1}\text{Tr}\left(H\right)\right)^{2}\\
 & \leq\frac{2}{n^{2}}\sum_{i=1}^{n}\left(\lambda_{\max}-\lambda_{\min}\right)^{2}=\frac{2}{n}\left(\lambda_{\max}-\lambda_{\min}\right)^{2}
\end{align*}

\end{proof}
The previous lemma shows that if the eigenspectrum of a matrix is
bounded, we can obtain a Bernstein bound on the error incurred by
sampling the trace. Furthermore, the convergence of the series \eqref{eq:martin}
is also determined by the extremal eigenvalues of $S$. If we truncate
the series (\ref{eq:martin}), we can bound the truncation error using
the extremal eigenvalues. We formalize this intuition in the following
theorem, which is adapted from the main theorem in \cite{Barry1999}.
While that main theorem in \cite{Barry1999} only considered a confidence
interval based on the covariance properties of Gaussian distribution,
we generalize this result to a more general Bernstein bound. 
\begin{theorem}
\label{thm:det-sampling-theorem}Consider $S\in\mathcal{S}_{n}^{+}$
with $0\preceq S\preceq\left(1-\delta\right)I$ for some $\delta\in\left(0,1\right)$.
Call $y=n^{-1}\log\left|I-S\right|$ the quantity to estimate, and
consider $\mathbf{u}_{i}\sim\mathcal{N}\left(\mathbf{0},I_{n}\right)$
for $i=1\cdots p$ all independent. Call $\hat{y}_{p,l}$ an estimator
of the truncated series of $l$ elements computed by sampling the
trace using $p$ samples: 
\[
\hat{y}_{p,l}=-\frac{1}{p}\sum_{j=1}^{p}\sum_{k=1}^{l}\frac{1}{k}\frac{\mathbf{u}_{j}^{T}S^{k}\mathbf{u}_{j}}{\mathbf{u}_{j}^{T}\mathbf{u}_{j}}
\]
Given $\epsilon>0$ and $\eta\in\left(0,1\right)$, the $\hat{y}_{p,l}$
approximates $y$ up to precision $\epsilon$ with probability at
least $1-\eta$ by choosing $p\geq16\left(\frac{1}{\epsilon}+\frac{1}{n\epsilon^{2}}\right)\log\left(2/\eta\right)\log^{2}\left(\delta^{-1}\right)$
and $l\geq2\delta^{-1}\log\left(\frac{n}{\delta\epsilon}\right)$:
\[
\mathbb{P}\left[\left|y-\hat{y}_{p,l}\right|\geq\epsilon\right]\leq\eta
\]

\end{theorem}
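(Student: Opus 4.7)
The plan is to split the total error into a deterministic truncation part and a stochastic sampling part, and drive each below $\epsilon/2$ so that the triangle inequality yields $|y-\hat{y}_{p,l}|\leq\epsilon$. Set $y_l := -n^{-1}\sum_{k=1}^l \frac{1}{k}\Tr(S^k)$ and $H := \sum_{k=1}^l \frac{1}{k}S^k \in \mathcal{S}_n^+$, so that $y_l = -n^{-1}\Tr(H)$ and $\hat{y}_{p,l} = -\frac{1}{p}\sum_{j=1}^p \frac{\mathbf{u}_j^T H \mathbf{u}_j}{\mathbf{u}_j^T \mathbf{u}_j}$. The difference $y-y_l$ is deterministic and controlled by the Martin tail bound; the difference $y_l - \hat{y}_{p,l}$ is exactly the quantity controlled by Lemma \ref{lem:bernstein-trace} applied to $H$.

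For the truncation error, every eigenvalue of $S$ lies in $[0,1-\delta]$, so
\[
|y - y_l| \leq \sum_{k=l+1}^\infty \frac{(1-\delta)^k}{k} \leq \frac{(1-\delta)^{l+1}}{\delta(l+1)}.
\]
Using $1-\delta \leq e^{-\delta}$, the hypothesis $l \geq 2\delta^{-1}\log(n/(\delta\epsilon))$ gives $(1-\delta)^l \leq (\delta\epsilon/n)^2$, which is comfortably below $\epsilon/2$. Thus the full error budget can be spent on the sampling step.

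For the sampling error, the key quantitative input is a spectral bound on $H$: since $\sum_{k=1}^\infty \frac{(1-\delta)^k}{k} = \log(\delta^{-1})$, the eigenvalues of $H$ lie in $[0,\log(\delta^{-1})]$, so $\lambda_{\max}-\lambda_{\min} \leq \log(\delta^{-1})$. Lemma \ref{lem:bernstein-trace} then yields
\[
\mathbb{P}\bigl[|y_l - \hat{y}_{p,l}| \geq \epsilon/2\bigr] \leq 2\exp\left(-\frac{p(\epsilon/2)^2}{4\log^2(\delta^{-1})/n + \log(\delta^{-1})\epsilon/3}\right).
\]
Setting the right-hand side at most $\eta$ and solving for $p$ reduces to the stated hypothesis $p \geq 16\bigl(\tfrac{1}{\epsilon}+\tfrac{1}{n\epsilon^2}\bigr)\log(2/\eta)\log^2(\delta^{-1})$ after absorbing the constants from the two terms in the Bernstein denominator.

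The only slightly delicate step is tracking the factor $\log^2(\delta^{-1})$: it originates in the closed form $\|H\|_2 \leq -\log\delta$, itself a consequence of evaluating the Mercator series $-\log(1-x) = \sum_{k\geq 1} x^k/k$ at $x = 1-\delta$. Everything else is a routine geometric-tail estimate plus a direct invocation of Lemma \ref{lem:bernstein-trace}; no randomness enters beyond the $p$ Gaussian vectors, and the truncation bound holds deterministically, so no union bound is required.
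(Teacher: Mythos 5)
Your proposal is correct and follows essentially the same route as the paper's own proof: split the error at the truncated mean $y_l=\mathbb{E}[\hat y_{p,l}]$, bound the tail of the Martin series geometrically using $0\preceq S\preceq(1-\delta)I$, bound the spectrum of the truncated series by $\log(\delta^{-1})$, and invoke Lemma \ref{lem:bernstein-trace} at precision $\epsilon/2$ before solving for $p$. The only differences are cosmetic (you take $H$ with a positive sign, and you work from the theorem's stated choice of $l$ rather than the slightly different one used inside the paper's proof), and your constant-absorption step to reach $p\geq16\left(\tfrac{1}{\epsilon}+\tfrac{1}{n\epsilon^{2}}\right)\log(2/\eta)\log^{2}(\delta^{-1})$ carries exactly the same implicit requirement as the paper's.
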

The proof of this result is detailed in Appendix A.

From this theorem we derive two results that justify the notion of
preconditioners for determinants: one for exact preconditioners and
one for approximate preconditioners. The corresponding algorithm,
which we call \texttt{Precond\-itioned\-LogDetMonteCarlo}, is presented
in Algorithm \ref{alg:SampleLogDet}. 
\begin{corollary}
\label{cor:preconditioning}Let $A\in\mathcal{S}_{n}^{+}$ and $B\in\mathcal{S}_{n}^{+}$
be positive definite matrices so that $B$ is a $\kappa-$approximation
of $A$: 
\begin{equation}
A\preceq B\preceq\kappa A\label{eq:A-B-bounds}
\end{equation}
Given $\epsilon>0$ and $\eta\in\left(0,1\right)$, the algorithm
\texttt{PreconditionedLogDetMonteCarlo} computes $\frac{1}{n}\log\left|B^{-1}A\right|$
up to precision $\epsilon$ with probability greater than $1-\eta$,
by performing $16\kappa\left(\frac{1}{\epsilon}+\frac{1}{n\epsilon^{2}}\right)\log\left(\frac{2\kappa}{\epsilon}\right)\log\left(2/\eta\right)\log^{2}\left(\kappa\right)$
vector inversions from $B$ and vector multiplies from $A$. 
\end{corollary}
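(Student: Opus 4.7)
The plan is to reduce the corollary to a direct application of Theorem \ref{thm:det-sampling-theorem}. The key structural observation is that although $B^{-1}A$ is not symmetric, it is similar via $B^{1/2}$ to the symmetric matrix $T := B^{-1/2} A B^{-1/2}$, so $\log|B^{-1}A| = \log|T|$ and $\Tr((B^{-1}A)^k) = \Tr(T^k)$ for every $k \geq 1$. The preconditioning hypothesis \eqref{eq:A-B-bounds} is equivalent to $\kappa^{-1} I \preceq T \preceq I$, so the shifted matrix $\tilde S := I - T$ satisfies $0 \preceq \tilde S \preceq (1 - \kappa^{-1}) I$, which is exactly the hypothesis of Theorem \ref{thm:det-sampling-theorem} with $\delta = \kappa^{-1}$.

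Applying Theorem \ref{thm:det-sampling-theorem} to $\tilde S$ approximates $n^{-1}\log|I - \tilde S| = n^{-1}\log|B^{-1}A|$ to precision $\epsilon$ with probability at least $1 - \eta$, provided one uses $p = \Theta((\epsilon^{-1} + (n \epsilon^2)^{-1}) \log(\eta^{-1}) \log^2 \kappa)$ Gaussian samples and truncation length $l = \Theta(\kappa \log(n \kappa / \epsilon))$. Each of the $pl$ Martin iterations can be implemented by the recurrence $v \mapsto v - B^{-1}(Av)$, i.e.\ by applying the operator $S := I - B^{-1}A$, which costs exactly one multiply by $A$ and one linear solve against $B$. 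Since $S$ is similar to $\tilde S$, their powers have identical traces, so the Hutchinson-style estimator has the same expectation whether expressed in terms of $S$ or of $\tilde S$, and \texttt{Precond\-itioned\-LogDetMonteCarlo} therefore realises exactly the estimator analysed by the theorem. Multiplying $p$ by $l$ recovers the operation count in the statement, up to the rearrangement of a logarithmic factor.

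The delicate point I would have to justify carefully is that Lemma \ref{lem:bernstein-trace} is proved for symmetric matrices while the computed powers $S^k$ are in general non-symmetric, so the Bernstein concentration does not apply verbatim. This is the main technical obstacle. The cleanest resolution exploits the fact that a Gaussian quadratic form $u^T M u$ depends only on the symmetric part of $M$: unbiasedness with mean $\Tr(\tilde S^k)$ follows immediately from the similarity, and the variance calculation of Lemma \ref{lem:bernstein-trace} can be transferred through the relation $S = B^{-1/2} \tilde S B^{1/2}$ together with the rotational invariance of the Gaussian, so that only the spectrum of $\tilde S$ — bounded by $1 - \kappa^{-1}$ — enters the Bernstein parameters. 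Once this transfer is verified, the probability guarantee and the operation count follow by direct substitution of $\delta = \kappa^{-1}$ into Theorem \ref{thm:det-sampling-theorem}.
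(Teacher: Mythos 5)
Your main line of argument is essentially the paper's own proof: set $\tilde S=I-B^{-1/2}AB^{-1/2}$, observe that \eqref{eq:A-B-bounds} is equivalent to $0\preceq\tilde S\preceq\left(1-\kappa^{-1}\right)I$ so that Theorem \ref{thm:det-sampling-theorem} applies with $\delta=\kappa^{-1}$, use the similarity $I-B^{-1}A=B^{-1/2}\tilde S B^{1/2}$ to conclude that the log-determinants and the traces of all powers coincide, so that each Martin iteration can be realised by one multiply by $A$ and one solve against $B$, and finally multiply $p$ by $l$ to get the operation count. The paper does exactly this, writing the truncated series for $S$ and rearranging it into traces of $H^{i}$ with $H=I-B^{-1}A$.

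Regarding the ``delicate point'' you raise: you are right that it exists, and you should know that the paper does not resolve it either --- it only proves the deterministic identity $\Tr\left(S^{i}\right)=\Tr\left(H^{i}\right)$, which equates the \emph{expectations} of the two estimators, and then treats the $H$-based Monte Carlo estimator as if Lemma \ref{lem:bernstein-trace} applied to it verbatim. However, the repair you sketch is not correct as stated. The Gaussian quadratic form $\mathbf{u}^{T}H^{k}\mathbf{u}$ is governed by the symmetric part $\frac{1}{2}\left(H^{k}+\left(H^{k}\right)^{T}\right)$, and under the non-orthogonal similarity $H^{k}=B^{-1/2}\tilde S^{k}B^{1/2}$ the spectral range of that symmetric part --- hence both the almost-sure bound and the variance entering Bernstein's inequality --- can exceed that of $\tilde S^{k}$ by a factor on the order of $\sqrt{\kappa\left(B\right)}$ (a $2\times2$ example with $B^{1/2}=\mathrm{diag}\left(t,1\right)$ and an off-diagonal $\tilde S^{k}$ already shows this). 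Rotational invariance of the Gaussian only licenses conjugation by orthogonal matrices, not by $B^{1/2}$, so the claim that ``only the spectrum of $\tilde S$ enters the Bernstein parameters'' does not follow; unbiasedness transfers, concentration does not. A genuine fix would have to either run the estimator on the symmetric form (e.g.\ by sampling with covariance adapted to $B$, so that the quadratic form equals $\mathbf{g}^{T}\tilde S^{k}\mathbf{g}$ for a standard Gaussian $\mathbf{g}$) or bound the symmetric part of $H^{k}$ directly, accepting a dependence on $\kappa\left(B\right)$ --- the same issue the paper leaves untreated.
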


The proof of this corollary is presented in Appendix A. Usually, computing
the exact inverse by an SDD matrix is too expensive. We can instead
extend the previous result to consider a black box procedure that
approximately computes $B^{-1}x$. If the error introduced by the
approximate inversion is small enough, the result from the previous
corollary still holds. This is what the following theorem establishes: 
\begin{theorem}
\label{thm:preconditioning-approx}Consider $A,B\in\mathcal{S}_{n}^{+}$
positive definite with $B$ a $\kappa-$approximation of $A$ with
$\kappa\geq2$. Furthermore, assume there exists a linear operator
$C$ so that for all $y\in\mathbb{R}^{n}$, $C$ returns a $\nu-$approximation
of $B^{-1}y$: 
\[
\left\Vert C\left(y\right)-B^{-1}y\right\Vert _{B}\leq\nu\left\Vert B^{-1}y\right\Vert _{B}
\]
Given $\eta\in\left(0,1\right)$ and $\epsilon>0$, if $\nu\leq\min\left(\frac{\epsilon}{8\kappa^{3}\kappa\left(B\right)},\frac{1}{2\kappa}\right)$,
then the algorithm \\
 \texttt{PreconditionedLogDetMonteCarlo} returns a scalar $z$ so
that: 
\[
\mathbb{P}\left[\left|z-n^{-1}\log\left|B^{-1}A\right|\right|\geq\epsilon\right]\leq\eta
\]
by performing $64\kappa\left(\frac{1}{\epsilon}+\frac{1}{n\epsilon^{2}}\right)\log\left(\frac{2\kappa}{\epsilon}\right)\log\left(2/\eta\right)\log^{2}\left(\kappa\right)$
vector calls to the operator $C$ and vector multiplies from $A$. 
\end{theorem}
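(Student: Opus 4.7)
The plan is to reduce the claim to Corollary \ref{cor:preconditioning} by showing that replacing $B^{-1}$ with its $\nu$-approximation $C$ introduces only a small deterministic bias in the truncated Martin series. Write $T = B^{-1}A$ and $\tilde T = CA$, and let
\[
\hat y_{p,l} = -\frac{1}{p}\sum_{j=1}^{p}\sum_{k=1}^{l}\frac{1}{k}\frac{u_j^T (I-T)^k u_j}{u_j^T u_j}
\]
be the idealized estimator using exact $B$-solves, while $\hat z_{p,l}$ denotes the actual output obtained by iterating $v \mapsto C(Av)$ in place of $v \mapsto B^{-1}(Av)$. The triangle inequality gives
\[
|\hat z_{p,l} - n^{-1}\log|B^{-1}A|| \le |\hat z_{p,l} - \hat y_{p,l}| + |\hat y_{p,l} - n^{-1}\log|B^{-1}A||.
\]
I would choose $p$ and $l$ so that Corollary \ref{cor:preconditioning}, applied with error budget $\epsilon/2$ and confidence $1-\eta$, controls the second term. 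This splitting is what produces the leading constant $64$ in the operation count (roughly $4\times$ the constant $16$ from Corollary \ref{cor:preconditioning} after accounting for the doubled budget in the $1/\epsilon$ and $1/(n\epsilon^2)$ terms).

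To control the first, purely deterministic term by $\epsilon/2$, I would work in the $B$-inner product $\langle x,y\rangle_B = x^T B y$, where $T = B^{-1}A$ is self-adjoint with spectrum in $[\kappa^{-1},1]$. The hypothesis on $C$ rewrites as $\|(\tilde T - T)z\|_B = \|(C-B^{-1})Az\|_B \le \nu \|Tz\|_B \le \nu \|z\|_B$, so $\|\tilde T - T\|_B \le \nu$. Combined with $\nu \le 1/(2\kappa)$, this places the $B$-spectrum of $\tilde T$ inside $[1/(2\kappa), 3/2]$, so the power iterates $\tilde T^k$ stay uniformly bounded, and the telescoping identity
\[
\tilde T^k - T^k = \sum_{i=0}^{k-1} \tilde T^{i}(\tilde T - T) T^{k-1-i}
\]
yields $\|\tilde T^k - T^k\|_B = O(k\nu)$. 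Converting from the $B$-operator norm to Euclidean quadratic forms against the Gaussian samples $u_j$ loses a factor of $\kappa(B)$, and summing $\sum_{k=1}^{l}\frac{1}{k}\cdot O(k\nu\,\kappa(B))$ contributes a deterministic bias of order $l\,\nu\,\kappa(B)$, multiplied by another factor of $\kappa$ from the boundedness of the $u^T(\cdot)u / u^T u$ normalization on the relevant spectral range. Since Corollary \ref{cor:preconditioning} requires $l = O(\kappa \log(n\kappa/\epsilon))$, the hypothesis $\nu \le \epsilon/(8\kappa^3 \kappa(B))$ is precisely what is needed (up to logarithmic factors being absorbed) to force this bias below $\epsilon/2$.

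The main obstacle will be controlling $\|\tilde T^k - T^k\|$ uniformly over $k = 1,\ldots,l$ without catastrophic blow-up: a naive bound loses a factor of $k$ per term, and to keep the total error small one must exploit that both $\tilde T$ and $T$ are stable contractions in the $B$-geometry, which is exactly what $\nu \le 1/(2\kappa)$ buys. A secondary subtlety is that the Monte Carlo estimator uses standard Euclidean Gaussians rather than $B$-rotated ones, so the conversion between Euclidean and $B$-operator norms introduces the $\kappa(B)$ factor in the hypothesis on $\nu$; this conversion should not commute freely with the power iteration, so care is needed to apply it only once at the end rather than per iterate.
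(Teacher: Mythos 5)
Your overall architecture matches the paper's: split the error into a deterministic bias caused by replacing $B^{-1}$ with $C$, plus the Monte Carlo error of the idealized estimator, give each a budget of $\epsilon/2$, and let the factor-of-four increase in $p\,l$ absorb the halved budgets. The gap is in how you control the deterministic bias. Your telescoping identity gives $\left\Vert \tilde T^{k}-T^{k}\right\Vert _{B}=O(k\nu)$ \emph{at best}, and summing $\sum_{k\leq l}\frac{1}{k}\,O(k\nu)$ produces a bias proportional to $l\nu$ (times the norm-conversion factor). Since $l$ must be of order $\kappa\log\left(\frac{2\kappa}{\epsilon}\right)$, under the stated hypothesis $\nu\leq\frac{\epsilon}{8\kappa^{3}\kappa\left(B\right)}$ your bias bound is of order $\frac{\epsilon}{\kappa}\log\left(\frac{2\kappa}{\epsilon}\right)$, which exceeds $\epsilon/2$ whenever $\log\left(\frac{2\kappa}{\epsilon}\right)>\kappa$; the theorem's hypothesis has no slack to ``absorb logarithmic factors.'' Moreover, the claim that $\nu\leq\frac{1}{2\kappa}$ makes $\tilde T^{k}$ ``uniformly bounded'' is not justified: $T=B^{-1}A$ has $B$-norm $\leq1$ but is not bounded away from $1$, and $\left\Vert \tilde T\right\Vert _{B}$ can be as large as $1+\nu$, so the powers are only controlled by $(1+\nu)^{l}\leq e^{\nu l}$, which on the $\nu=\frac{1}{2\kappa}$ branch of the min is polynomially large in $\kappa/\epsilon$. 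You have also picked the wrong operator to call a contraction: the algorithm iterates (an approximation of) $R=I-B^{-1}A$, not $T$, and it is $R$ that enjoys the strict contraction $\left\Vert R\right\Vert _{B}\leq\left(1-\kappa^{-1}\right)^{2}$.

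That strict contraction is precisely the missing idea, and it is how the paper removes the factor of $l$. The paper models the inexact solves as a $\nu$-approximate power sequence for $R$ (per-step relative error in the $B$-norm) and shows by induction that the propagated errors $\omega_{k}=\left\Vert x^{(k)}-R^{k}x^{(0)}\right\Vert _{B}$ decay geometrically, so that $\sum_{k\geq1}\omega_{k}\leq4\kappa^{2}\nu\left\Vert x^{(0)}\right\Vert _{B}$ \emph{independently of} $l$ (Lemmas \ref{lem:nu-sequence-bound} and \ref{lem:partial-sequence-approximate}); a single Cauchy--Schwarz conversion then costs only $\sqrt{\kappa\left(B\right)}$, yielding the deterministic bound $\left|z_{p,l}-\hat y_{p,l}\right|\leq4\nu\kappa^{2}\sqrt{\kappa\left(B\right)}\leq\epsilon/2$ with no logarithmic loss, after which Theorem \ref{thm:det-sampling-theorem} (with $\delta=\kappa^{-1}$) handles the probabilistic half exactly as you intended. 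To repair your argument you would need to replace the $O(k\nu)$-per-power telescoping on $T$ by this geometric-decay accumulation for $R$ (or otherwise exploit $\left\Vert R\right\Vert _{B}\leq\left(1-\kappa^{-1}\right)^{2}$), since without it the stated threshold on $\nu$ does not suffice.
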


The proof of this result is detailed in Appendix A. While the overall
bound looks the same, the constant (taken away by the $\mathcal{O}\left(\cdot\right)$
notation) is four times as large as in Corollary \ref{cor:preconditioning}.

This last theorem shows that we can compute a good approximation of
the log-determinant if the preconditioner $B$: (a) is close to $A$
in the spectral sense, and (b) can be approximately inverted and the
error introduced by the approximate inversion can be controlled. This
happens to be the case for symmetric, diagonally dominant matrices.

\begin{algorithm}
Algorithm \textbf{PreconditionedLogDetMonteCarlo}($B$,$A$,$\eta$,$p$,$l$):

$y\leftarrow0$

for $j$ from $1$ to $p$:

~~Sample $\mathbf{u}\sim\mathcal{N}\left(\mathbf{0},I\right)$

~~$\mathbf{v}\leftarrow\mathbf{u}/\left\Vert \mathbf{u}\right\Vert $

~~$z\leftarrow0$

~~for $k$ from $1$ to $l$:

~~~~$\mathbf{v}\leftarrow B^{-1}A\mathbf{v}$~up to precision
$\eta$

~~~~$z\leftarrow z+k^{-1}\mathbf{v}^{T}\mathbf{u}$

~~$y\leftarrow y+p^{-1}z$

Return $y$

\caption{PreconditionedLogDetMonteCarlo\label{alg:SampleLogDet}}
\end{algorithm}

\section{Ultra-sparsifiers as determinant preconditioners}

\subsection{Reduction on a Laplacian}

\label{sec:Making-the-problem}

From now on, we consider the computation of $\log A$, where $A\in SDD_{n}$.
The techniques we will develop work on Laplacian matrices instead
of SDD matrices. An SDD matrix is positive semi-definite while a Laplacian
matrix is always singular, since its nullspace is spanned by $\mathbf{1}$.
We generalize the definition of the determinant to handle this technicality. 
\begin{definition}
\emph{Pseudo-log-determinant (PLD):} Let $A\in\mathcal{S}^{n+}$ be
a non-null positive semi-definite matrix. The pseudo-log-determinant
is defined by the sum of the logarithms of all the positive eigenvalues:
\[
\ld\left(A\right)=\sum_{\lambda_{i}>0}\log\left(\lambda_{i}\right)
\]
where $\lambda_{i}$ are the eigenvalues of $A$. 
\end{definition}

The interest of the PLD lies in the connection between SDD matrices
and some associated Laplacian. It is well-known that solving an SDD
system in $SDD_{n}$ can be reduced to solving a Laplacian system
of size $2n+1$, using the reduction technique introduced Gremban
in \cite{Gremban1996}. Recall that a Laplacian has all its non-diagonal
terms non-positive, the sum of each row and each column being zero.
The reduction has been simplified by Kelner et al. in \cite{kelner2013simple},
Appendix A. Using the Kelner et al.\ reduction, we can turn the computation
of a the log-determinant of a SDD system into the computation of two
PLDs of Laplacians, as shown in the next lemma. 
\begin{lemma}
\emph{Kelner et al.\ reduction for log-determinants. }Given an invertible
SDD matrix $A$, consider the Kelner decomposition $A=D_{1}+A_{p}+A_{n}+D_{2}$
where: 
\begin{itemize}
\item $A_{p}$ is the matrix that contains all the positive off-diagonal
terms of $A$ 
\item $A_{n}$is the matrix that contains all the negative off-diagonal
terms of $A$ 
\item $D_{1}$ is a diagonal matrix that verifies $D_{1}\left(i,i\right)=\sum_{j\neq i}\left|A\left(i,j\right)\right|$ 
\item $D_{2}$ is the excess diagonal matrix: $D_{2}=A-A_{p}-A_{n}-D_{1}$ 
\end{itemize}
Call $\hat{A}=D_{1}+A_{n}-A_{p}$ and $\tilde{A}=\left(\begin{array}{cc}
D_{1}+D_{2}/2+A_{n} & -D_{2}/2-A_{p}\\
-D_{2}/2-A_{p} & D_{1}+D_{2}/2+A_{n}
\end{array}\right)$. Then $\hat{A}$ and $\tilde{A}$ are both Laplacian matrices and
\[
\log\left|A\right|=\ld\left(\tilde{A}\right)-\ld\left(\hat{A}\right)
\]
\end{lemma}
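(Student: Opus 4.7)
The plan has two parts: (i) check that both $\hat{A}$ and $\tilde{A}$ are genuine graph Laplacians, and (ii) establish the determinant identity by block-diagonalizing $\tilde A$ via an orthogonal change of basis.

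For the Laplacian checks, I would just verify the two defining conditions directly. For $\hat A = D_1 + A_n - A_p$, the off-diagonal entries are those of $A_n - A_p$, and by construction $A_n$ has non-positive and $-A_p$ has non-positive off-diagonal entries; hence all off-diagonal entries of $\hat A$ are $\leq 0$. Row sums: $\sum_{j\neq i}\lvert A_n(i,j)-A_p(i,j)\rvert = \sum_{j\neq i}\lvert A(i,j)\rvert = D_1(i,i)$, which cancels the diagonal. For $\tilde A$, I would first observe that $D_2$ is diagonal and nonnegative, since its $i$th diagonal entry equals $A_{ii}-\sum_{j\neq i}\lvert A_{ij}\rvert \geq 0$ by diagonal dominance of $A$. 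That makes $-D_2/2$ have nonpositive diagonal, so the off-diagonal block $-D_2/2-A_p$ has all entries $\leq 0$, and the diagonal blocks $D_1+D_2/2+A_n$ inherit their off-diagonals from $A_n$, also $\leq 0$. The row-sum verification is then a direct line of algebra: the diagonal block contributes $D_1(i,i)+D_2(i,i)/2+\sum_{j\neq i}A_n(i,j)$ and the off-diagonal block contributes $-D_2(i,i)/2-\sum_{j\neq i}A_p(i,j)$, summing to $D_1(i,i)-\sum_{j\neq i}\lvert A(i,j)\rvert = 0$.

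For the identity $\log\lvert A\rvert = \ld(\tilde A)-\ld(\hat A)$, the key observation is that $\tilde A$ has the symmetric block form $\begin{pmatrix} X & Y \\ Y & X \end{pmatrix}$ with $X=D_1+D_2/2+A_n$ and $Y=-D_2/2-A_p$. Consider the orthogonal matrix
\[
Q=\frac{1}{\sqrt{2}}\begin{pmatrix} I & I \\ I & -I \end{pmatrix}.
\]
A short computation gives $Q^{T}\tilde A\, Q = \begin{pmatrix} X+Y & 0 \\ 0 & X-Y \end{pmatrix}$. Plugging in the definitions of $X$ and $Y$, the $D_2/2$ terms cancel in $X+Y$ and double up in $X-Y$, yielding $X+Y = D_1+A_n-A_p = \hat A$ and $X-Y = D_1+D_2+A_n+A_p = A$.

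Since $Q$ is orthogonal, the spectrum of $\tilde A$ is the disjoint union of the spectra of $\hat A$ and $A$. The matrix $A$ is invertible and SDD, so all its eigenvalues are strictly positive and $\ld(A)=\log\lvert A\rvert$. The matrix $\hat A$ is a Laplacian, so its nullspace contributes only zero eigenvalues which are excluded from the PLD. Therefore the positive eigenvalues of $\tilde A$ decompose cleanly into the positive eigenvalues of $\hat A$ and those of $A$, giving $\ld(\tilde A)=\ld(\hat A)+\log\lvert A\rvert$, which rearranges to the desired identity. The only subtle point is the bookkeeping of the zero eigenvalue of $\hat A$ (and the corresponding zero eigenvalue of $\tilde A$): the definition of $\ld$ simply drops them, so no cancellation beyond what is already written is needed. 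I do not expect any real obstacle here beyond ensuring the block-matrix algebra is carried out carefully.
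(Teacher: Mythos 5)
Your proof is correct and takes essentially the same route as the paper: the paper exhibits the eigenvectors $\left(x,-x\right)$ and $\left(y,y\right)$ of $\tilde{A}$ arising from $A$ and $\hat{A}$, which is exactly the symmetric/antisymmetric splitting that your conjugation by $Q$ encodes, so in both arguments the spectrum of $\tilde{A}$ is the concatenation of the spectra of $\hat{A}$ and $A$ and the PLD identity follows. If anything your packaging is slightly tighter, since $Q^{T}\tilde{A}Q=\left(\begin{smallmatrix}\hat{A} & 0\\ 0 & A\end{smallmatrix}\right)$ handles multiplicities automatically and you verify the Laplacian structure explicitly where the paper asserts it ``by construction.''
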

\begin{proof}
The matrices $\hat{A}$ and $\tilde{A}$ are Laplacian by constructions,
and we show that the eigenvalues of $\tilde{A}$ are exactly the concatenation
of the eigenvalues of $\hat{A}$ and $A$. Call $\lambda_{i}$ an
eigenvalue of $A$ with $x$ an associated eigenvector. Then the vector
$\left(\begin{array}{c}
x\\
-x
\end{array}\right)$ is an eigenvector of $\tilde{A}$ with associated eigenvalue $\lambda$.
Similarly, call $\mu_{i}$ an eigenvalue of $\hat{A}$ with $y$ an
associated eigenvector. Then $\mu$ is an eigenvalue of $\tilde{A}$
with associated eigenvector $\left(\begin{array}{c}
y\\
y
\end{array}\right)$. Since $\tilde{A}$ is exactly of size $2n$, the set of eigenvalues
of $\tilde{A}$ is exactly the concatenation of the eigenvalues of
$\hat{A}$ and $A$. By definition of the PLD: $\ld\left(\tilde{A}\right)=\sum_{i:\lambda_{i}>0}\log\lambda_{i}+\sum_{\mu_{i}>0}\log\mu_{i}$.
Since $A$ is invertible, $\lambda_{i}>0$ for all $i$ and $\sum_{i:\lambda_{i}>0}\log\lambda_{i}=\sum_{i}\log\lambda_{i}=\log\left|A\right|$.
Finally, by definition of the PLD, we get $\sum_{\mu_{i}>0}\log\mu_{i}=\ld\left(\hat{A}\right)$. 
\end{proof}

To any Laplacian $L$ we can associate a unique positive definite
matrix $F_{L}$ (up to a permutation), and this transform preserves
eigenvalues and matrix inequalities. We call this process ``floating''
of the Laplacian, by analogy to the ``grounding'' in the electrical
sense of the SDD matrix as a Laplacian introduced by Gremban (see
\cite{Gremban1996}, Chapter 4). 
\begin{definition}
\emph{Floating a Laplacian}. Consider $L$ a Laplacian matrix. Call
$F_{L}$ the matrix formed by removing the last row and the last column
from $L$. 
\end{definition}

The following lemma shows that the Laplacian matrix overdetermines
a system, and that no information is lost by floating it. 
\begin{lemma}
\label{lem:floating-properties}Consider $Z$ a (weighted) Laplacian
matrix of a connected graph, then: 
\begin{enumerate}
\item The eigenvalues of $F_{Z}$ are the positive eigenvalues of $Z$,
and the corresponding eigenvectors for $F_{Z}$ are the same eigenvectors,
truncated by the last coefficient. 
\item $\ld\left(Z\right)=\log\left|F_{Z}\right|$ 
\item Given $Z_{1},Z_{2}$ Laplacian matrices, we have $Z_{1}\preceq Z_{2}\Rightarrow F_{Z_{1}}\preceq F_{Z_{2}}$
. 
\end{enumerate}
\end{lemma}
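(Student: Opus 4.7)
The plan is to prove the three properties in a natural order: first the spectral correspondence in (1), then the determinant identity in (2) as a corollary, and finally the Loewner-order monotonicity in (3) by a simple subspace restriction.

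The key structural fact to exploit for (1) is that, for a connected graph, the weighted Laplacian $Z$ has a one-dimensional null space spanned by $\mathbf{1}$, with every other eigenvalue strictly positive. Writing $Z$ in the block form
\[
Z=\begin{pmatrix} F_Z & b \\ b^T & c \end{pmatrix},
\]
the row-sum identity $Z\mathbf{1}=0$ forces $b$ and $c$ to be determined by $F_Z$ (namely $b=-F_Z\mathbf{1}_{n-1}$ and $c=\mathbf{1}_{n-1}^TF_Z\mathbf{1}_{n-1}$). The cleanest way to extract the spectrum is to change basis so that $\mathbf{1}/\sqrt{n}$ replaces the last standard basis vector; in the new basis $Z$ becomes block-diagonal with a zero block along $\mathbf{1}$ and an $(n-1)\times(n-1)$ block that carries exactly the nonzero eigenvalues of $Z$. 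Identifying this block with $F_Z$ via the appropriate similarity transform establishes the eigenvalue correspondence, and the eigenvector statement then follows by pulling the eigenvectors of that block back through the change of basis.

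Property (2) is then an immediate corollary: $\log|F_Z|$ is the sum of the logarithms of the eigenvalues of $F_Z$, which by (1) equals $\sum_{\lambda_i>0}\log\lambda_i$, and this is $\ld(Z)$ by definition of the pseudo-log-determinant. For property (3), the argument is the cleanest of the three: $Z_1\preceq Z_2$ means $x^T Z_1 x \le x^T Z_2 x$ for every $x\in\mathbb{R}^n$. Specializing to test vectors of the form $x=(y^T,0)^T$ with $y\in\mathbb{R}^{n-1}$ gives $y^T F_{Z_1} y \le y^T F_{Z_2} y$ for every such $y$, which is exactly $F_{Z_1}\preceq F_{Z_2}$.

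The main obstacle is property (1): the literal reading "truncate the eigenvector" is delicate because the last coordinate of a positive eigenvector of $Z$ need not vanish, so the correspondence is not a naive coordinate projection but rather the consequence of the orthogonality of those eigenvectors to $\mathbf{1}$ together with the change of basis described above. Once (1) is properly established, (2) is a one-line consequence and (3) requires only the subspace restriction, so essentially all the work is in setting up (1) cleanly.
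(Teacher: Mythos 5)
Your treatment of part (3) is correct and is essentially the paper's own argument: floating is the compression $F_Z=PZP^T$ with $P=\left(I_{n-1}\;0\right)$, a linear map on symmetric matrices that preserves the Loewner order, which is exactly your restriction to test vectors $(y^T,0)^T$. The gap is in part (1), at the step ``identifying this block with $F_Z$ via the appropriate similarity transform.'' After an orthogonal change of basis whose last vector is $\mathbf{1}/\sqrt{n}$, $Z$ does become block diagonal with an $(n-1)\times(n-1)$ block carrying the positive eigenvalues of $Z$; but that block is the restriction of $Z$ to $\mathbf{1}^{\perp}$, whereas $F_Z$ is the compression of $Z$ to $\mathrm{span}(e_1,\dots,e_{n-1})$. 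These are related by a congruence (a non-orthogonal change of coordinates), not a similarity, so eigenvalues are not preserved and no such identification exists. Concretely, for a single edge, $Z=\begin{pmatrix}1&-1\\-1&1\end{pmatrix}$ has positive eigenvalue $2$ while $F_Z=(1)$ has eigenvalue $1$; and if $Zv=\lambda v$ with $v=(\tilde v,v_n)$ and off-diagonal block $b$, the first $n-1$ rows give $F_Z\tilde v=\lambda\tilde v-v_n b$, so the truncated eigenvector is not an eigenvector of $F_Z$ unless $v_n=0$.

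Consequently part (2) cannot be obtained as a corollary of your part (1): in fact, by the matrix-tree theorem $\det F_Z=\frac{1}{n}\prod_{\lambda_i>0}\lambda_i$, so $\log\left|F_Z\right|=\ld\left(Z\right)-\log n$, and statements (1) and (2) hold only up to this $\log n$ discrepancy. (The paper's own proof, which calls (1) ``obvious'' and deduces (2) from $F_Z=PZP^T$, glosses over the same point; in the paper's downstream use only differences such as $\ld\left(L_G\right)-\ld\left(L_H\right)=\log\left|F_{L_G}\right|-\log\left|F_{L_H}\right|$ appear, where the $\log n$ terms cancel.) Your instinct that the ``truncate the eigenvector'' reading is delicate was exactly right, but the fix is not a change of basis: what can actually be proved from the congruence $F_Z=PZP^T$, together with the fact that $\mathrm{range}(P^T)$ meets $\ker Z=\mathrm{span}(\mathbf{1})$ trivially, is that $F_Z$ is positive definite and order-preserving under floating (your part (3)), plus the determinant identity with the explicit $\log n$ correction.
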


The proof of this lemma is straightforward, and is contained in Appendix
B.

A Laplacian matrix can be considered either for its graphical properties,
or for its algebraic properties. Recent results have shown a deep
connection between these two aspects, and they let us develop a general
framework for computing determinants: consider a Laplacian $L_{G}$
identified to its graph $G$. Using graphical properties of $L_{G}$,
we can construct a subgraph $H$ of $G$ for which the PLD is easier
to compute and that is a good approximation of $G$ in the spectral
sense. Then we can float the subgraph $H$ and apply results of section
\ref{sec:Preconditioned-log-determinants} to approximate the remainder
with high probability. More precisely: 
\begin{eqnarray*}
\ld\left(L_{G}\right) & = & \log\left|F_{L_{G}}\right|\\
 & = & \ld\left(L_{H}\right)-\log\left|F_{L_{H}}\right|+\log\left|F_{L_{G}}\right|\\
 & = & \ld\left(L_{H}\right)+\log\left|F_{L_{H}}^{-1}F_{L_{G}}\right|
\end{eqnarray*}

The first term $\ld\left(L_{H}\right)$ is usually easier to compute
by considering the graphical properties of $L_{H}$, while the remainder
$\log\left|F_{L_{H}}^{-1}F_{L_{G}}\right|$ is approximated by sampling.
Preconditioner graphs $L_{H}$ are typically efficient to factorize
using Cholesky factorization, and close enough to $G$ so that the
sampling procedure from the previous section can be applied to compute
$\log\left|F_{L_{H}}^{-1}F_{L_{G}}\right|$. We will see how to adapt
Spielman and Teng's remarkable work on \emph{ultra-sparsifiers} to
produce good preconditioners $H$ for the determinant. 


\subsection{A first preconditioner\label{sec:A-first-preconditioner}}

While the results in this section are not the main claims of this
paper, we hope they will provide some intuition, and an easier path
towards an implementation.

We present a first preconditioner that is not optimal, but that will
motivate our results for stronger preconditioners: a tree that spans
the graph $G$. Every graph has a low-stretch spanning tree, as discovered
by Alon et al. \cite{Alon1995}. The bound of Alon et al. was then
improved by Abraham et al. \cite{Abraham2008}. We restate their main
result. 
\begin{lemma}
(Lemma 9.2 from \cite{Spielman2009a}). Consider a weighted graph
$G$. There exists a spanning tree $T$ that is a subgraph of $G$
so that: 
\[
L_{T}\preceq L_{G}\preceq\kappa L_{T}
\]
with $\kappa=\tilde{\mathcal{O}}\left(m\log n\right)$. 
\label{lem:tree-st} \end{lemma}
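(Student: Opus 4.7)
The plan is to sandwich $L_G$ between two multiples of $L_T$ by combining a trivial structural lower bound with an upper bound controlled by the \emph{total stretch} of the edges of $G$ with respect to $T$, and then to invoke the construction of Abraham et al.\ to exhibit a spanning tree whose total stretch is $\tilde{\mathcal{O}}(m \log n)$.

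For the lower bound, I would decompose $L_G = L_T + L_{G\setminus T}$, where $L_{G\setminus T}$ is the Laplacian of the non-tree edges of $G$ equipped with their original weights. Since every weighted Laplacian is positive semidefinite, this immediately yields $L_T \preceq L_G$. For the upper bound, I would argue edge by edge. For each edge $e=(u,v)$ of $G$ of weight $w_e$, the contribution to $L_G$ is the rank-one matrix $w_e b_e b_e^T$ with $b_e = \mathbf{1}_u - \mathbf{1}_v$. Writing $b_e$ as a telescoping sum along the unique path $P_e$ joining $u$ and $v$ in $T$ and applying Cauchy-Schwarz in the inner product induced by $L_T$ gives the edge-wise bound
\[
w_e b_e b_e^T \preceq w_e R_T(u,v)\, L_T,
\]
where $R_T(u,v) = \sum_{f \in P_e} w_f^{-1}$ is the effective resistance between the endpoints of $e$ in $T$. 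Summing over the edges of $G$ produces $L_G \preceq \text{st}_T(G)\, L_T$, where $\text{st}_T(G) = \sum_{e=(u,v)\in E(G)} w_e R_T(u,v)$ is the total stretch of $G$ with respect to $T$.

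The genuinely nontrivial input, and the main obstacle, is the existence of a spanning tree with small total stretch: this is precisely the theorem of Abraham, Bartal and Neiman (refining Alon et al.), which produces in near-linear time a spanning tree $T \subseteq G$ satisfying $\text{st}_T(G) = \tilde{\mathcal{O}}(m \log n)$. Setting $\kappa = \text{st}_T(G)$ and combining with the two spectral inequalities above yields the claimed sandwich $L_T \preceq L_G \preceq \kappa L_T$ with $\kappa = \tilde{\mathcal{O}}(m \log n)$. Everything besides the combinatorial low-stretch construction reduces to elementary Laplacian algebra, so essentially all the difficulty is outsourced to the cited tree-embedding result.
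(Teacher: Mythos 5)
Your proposal is correct and follows essentially the same route as the paper: the lower bound comes from $T$ being a subgraph, the upper bound is the stretch inequality $L_G \preceq \text{st}_T(G)\,L_T$, and the existence of a tree with total stretch $\tilde{\mathcal{O}}(m\log n)$ is taken from Abraham et al. The only cosmetic difference is that you sketch the telescoping/Cauchy--Schwarz proof of the stretch inequality inline, whereas the paper cites it from Spielman--Teng and later proves a generalization (Lemma \ref{lem:stretch-inequality}) via a rank-one Schur-complement bound.
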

\begin{proof}
This follows directly from \cite{Spielman2009a}. $T$ is a subgraph
of $G$ (with the same weights on the edges), so $L_{T}\preceq L_{G}$
(see \cite{Spielman2009a} for example for a proof of this fact).
Furthermore, we have $L_{G}\preceq\text{st}_{T}\left(G\right)L_{T}$.
This latter inequality is a result of Spielman et al. in \cite{Spielman2010}
that we will generalize further in Lemma \ref{lem:stretch-inequality}.
Finally, a result by \cite{Abraham2008} shows that $T$ can be chosen
such that $\text{st}_{T}\left(G\right)\leq\mathcal{O}(m\log n(\log\log n)^{3})$. 
\end{proof}

Trees enjoy a lot of convenient properties for Gaussian elimination.
The Cholesky factorization of a tree can be computed in linear time,
and furthermore this factorization has a linear number of non-zero
elements \cite{Spielman2009a}. This factorization can be expressed
as: 
\[
L_{T}=PLDL^{T}P^{T}
\]
where $P$ is a permutation matrix, $L$ is a lower-triangular matrix
with the diagonal being all ones, and $D$ a diagonal matrix in which
all the elements but the last one are positive, the last element being
$0$. These well-known facts about trees are presented in \cite{Spielman2009a}.
Once the Cholesky factorization of the tree is performed, the log-determinant
of the original graph is an immediate by-product: 
\[
\log\left|L_{T}\right|=\sum_{i=1}^{n-1}\log D_{ii}
\]
Furthermore, computing $L_{T}^{+}x$ also takes $\mathcal{O}\left(n\right)$
computations by forward-backward substitution (see \cite{duff1986direct}).
Combining Corollary \ref{cor:preconditioning} and Lemma \eqref{lem:tree-st}
gives immediately the following result. 
\begin{theorem}
\label{thm:PLD-tree}Let $G$ be a graph with $n$ vertices and $m$
edges. Its PLD can be computed up to a precision $\epsilon$ and with
high probability in time: 
\[
\tilde{O}\left(m^{2}\log n\log^{2}\left(m\right)\left(\frac{1}{\epsilon}+\frac{1}{n\epsilon^{2}}\right)\log\left(\frac{2m}{\epsilon}\right)\log\left(2/\eta\right)\right)
\]
\end{theorem}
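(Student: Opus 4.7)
The plan is to combine the three ingredients already developed: Lemma \ref{lem:tree-st} (low-stretch spanning tree), Lemma \ref{lem:floating-properties} (floating preserves spectrum and PLD), and Corollary \ref{cor:preconditioning} (preconditioned Monte Carlo estimator). First I would invoke Lemma \ref{lem:tree-st} to produce a spanning tree $T \subseteq G$ satisfying $L_T \preceq L_G \preceq \kappa L_T$ with $\kappa = \tilde{O}(m \log n)$. Floating both Laplacians then gives, by Lemma \ref{lem:floating-properties}, positive definite matrices satisfying $F_{L_T} \preceq F_{L_G} \preceq \kappa F_{L_T}$ together with the identities $\ld(L_T) = \log|F_{L_T}|$ and $\ld(L_G) = \log|F_{L_G}|$, so that
$$\ld(L_G) \;=\; \ld(L_T) \;+\; \log\bigl|F_{L_T}^{-1} F_{L_G}\bigr|.$$

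The first term I would compute exactly in $O(n)$ time from the tree Cholesky factorization $L_T = P L D L^T P^T$, via $\ld(L_T) = \sum_{i=1}^{n-1}\log D_{ii}$; the same factorization simultaneously enables $O(n)$-time forward/backward solves against $F_{L_T}$. The second term I would estimate by applying Corollary \ref{cor:preconditioning} with the tree as preconditioner. Each step of \texttt{PreconditionedLogDetMonteCarlo} then performs one multiplication by $F_{L_G}$ in $O(m)$ time and one linear solve with $F_{L_T}$ in $O(n)$ time, for a per-step cost of $O(m)$. Summing the exactly computed $\ld(L_T)$ with the estimator yields the advertised approximation of the (normalized) PLD with the required failure probability.

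The running time is then pure substitution. The corollary bounds the number of matrix--vector operations by
$$O\!\left(\kappa\left(\tfrac{1}{\epsilon} + \tfrac{1}{n\epsilon^{2}}\right)\log\!\tfrac{2\kappa}{\epsilon}\,\log\!\tfrac{2}{\eta}\,\log^{2}\kappa\right),$$
each costing $O(m)$ as above, and plugging in $\kappa = \tilde{O}(m \log n)$ absorbs one factor of $m \log n$ into the leading term while converting $\log^{2}\kappa$ into $\log^{2} m$ and $\log(2\kappa/\epsilon)$ into $\log(2m/\epsilon)$. Since no new ideas are required, the main ``obstacle'' is essentially bookkeeping: one must verify that the spectral inequality survives floating in the direction needed by the corollary (which is precisely Lemma \ref{lem:floating-properties}(3)), interpret the stated ``precision $\epsilon$'' as applying to the normalization $n^{-1}\ld(L_G)$ that Corollary \ref{cor:preconditioning} naturally produces, and confirm that the $\tilde{O}$ notation cleanly absorbs the $(\log\log n)^{3}$ stretch factor of Abraham et al.\ that sits inside $\kappa$.
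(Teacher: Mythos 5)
Your proposal is correct and follows essentially the same route as the paper: a low-stretch spanning tree from Lemma \ref{lem:tree-st} giving $\kappa=\tilde{O}(m\log n)$, exact $O(n)$ tree Cholesky solves as the preconditioner inside Corollary \ref{cor:preconditioning}, $O(m)$ products by the floated Laplacian $F_{L_G}$ per iteration, and direct substitution of $\kappa$ into the corollary's iteration bound. The extra bookkeeping you flag (floating preserving the spectral inequality, the $n^{-1}$ normalization, and absorbing the $(\log\log n)^{3}$ factor into $\tilde{O}$) matches what the paper implicitly relies on.
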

\begin{proof}
Using Lemma \eqref{lem:tree-st}, we compute a low-stretch tree $L_{T}$
so that $L_{T}\preceq L_{G}\preceq\kappa L_{T}$ with $\kappa=\tilde{\mathcal{O}}\left(m\log n\right)$.
Using Corollary \eqref{cor:preconditioning}, approximating the PLD
with high precision requires 
\begin{eqnarray*}
\tilde{O}\left(\kappa\left(\frac{1}{\epsilon}+\frac{1}{n\epsilon^{2}}\right)\log\left(\frac{2\kappa}{\epsilon}\right)\log\left(2/\eta\right)\log^{2}\left(\kappa\right)\right)\\
=\tilde{O}\left(m\log n\left(\frac{1}{\epsilon}+\frac{1}{n\epsilon^{2}}\right)\log\left(\frac{2m}{\epsilon}\right)\log\left(2/\eta\right)\log^{2}\left(m\right)\right)
\end{eqnarray*}
inversions by the tree $T$ (done in $\mathcal{O}\left(n\right)$)
and vector products by the floated Laplacian $F_{L_{G}}$(done in
$\mathcal{O}\left(m\right)$). The overall cost is\[ \tilde{O}\left(m^{2}\log n\log^{2}\left(m\right)\left(\frac{1}{\epsilon}+\frac{1}{n\epsilon^{2}}\right)\log\left(\frac{2m}{\epsilon}\right)\log\left(2/\eta\right)\right).\]
\end{proof}
The previous result shows that the log-determinant can be computed
in roughly $\mathcal{O}\left(m^{2}\right)$ ($m$ being the number
of non-zero entries). This result may be of independent interest since
it requires relatively little machinery to compute, and it is a theoretical
improvement already for graphs with small vertex degree ($m=\mathcal{O}\left(n^{1+o\left(1\right)}\right)$)
over the Cholesky factorization of $G$ (which has complexity $\mathcal{O}\left(n^{3}\right)$
in all generality). Also, note that the PLD of the tree constructed
above provides an upper bound to the log-determinant of $G$ since
$L_{G}\preceq\kappa L_{T}$. We will see in Subsection \ref{sub:Stretch-bounds}
that we can compute a non-trivial lower bound as well. 


\subsection{Incremental sparsifiers\label{sec:Incremental-sparsifiers}}

We can do better and achieve near-linear time by using ultra-sparsifiers.
The main insight of our result is that the class preconditioners presented
by Spielman and Teng are based on incomplete Cholesky factorization,
and hence have a determinant that is relatively easy to compute, and
furthermore that they are excellent spectral preconditioners, so the
procedure \texttt{PreconditionedLogDetMonteCarlo} is efficient to
apply. We reintroduce some concepts presented in \cite{Koutis2010}
to present a self-contained result. The following paragraphs are well-known
facts about Spielman-Teng preconditioners and have been presented
in \cite{Koutis2010,Spielman2009a}.

The central idea to the Spielman-Teng preconditioner is to sample
$\mathcal{O}\left(n\right)$ edges from the graph $A$, to form a
subgraph $B$ that is close to a tree (hence it is easy to compute
some partial Cholesky factorization), yet it is close to the original
$A$ is the spectral sense ($A\preceq B\preceq\kappa A$), thanks
to the additional edges. The partial Cholesky factorization is computed
using the \texttt{GreedyElimination} algorithm presented in \cite{Koutis2010}.
In order for this section to be self-contained, we include here the
main results of Section 4 in \cite{Spielman2009a}.

Consider the Laplacian matrix $L_{B}$ of the subgraph $B$. There
exists an algorithm that computes the partial Cholesky factorization:
\[
L_{B}=PLCL^{T}P^{T}
\]
where: 
\begin{itemize}
\item $P$ is a permutation matrix 
\item $L$ is a non-singular, low triangular matrix of the form 
\[
L=\left(\begin{array}{cc}
L_{1,1} & 0\\
L_{2,1} & I_{n_{1}}
\end{array}\right)
\]
with the diagonal of $L_{1,1}$ being all ones. 
\item $C$ has the form 
\[
C=\left(\begin{array}{cc}
D_{n-n_{1}} & 0\\
0 & L_{A_{1}}
\end{array}\right)
\]
and every row and column of $L_{A_{1}}$ has at least 3 non-zero coefficients.
Furthermore, $L_{A_{1}}$ is itself Laplacian and: 
\[
\text{ld}\left(L_{G}\right)=\sum_{1}^{n-n_{1}}\log D_{ii}+\mbox{\text{ld}}\left(L_{A_{1}}\right)
\]

\end{itemize}
The exact algorithm that achieves this factorization is called \texttt{GreedyElimination}
and is presented in \cite{Koutis2010}. Using this factorization,
the PLD of the original Laplacian $L_{A}$ is: 
\begin{eqnarray}
\text{ld}\left(L_{A}\right) & = & \text{ld}\left(L_{B}\right)+\text{ld}\left(B^{+}A\right)\nonumber \\
 & = & \sum_{1}^{n-n_{1}}\log D_{ii}+\mbox{\text{ld}}\left(A_{1}\right)+\text{ld}\left(B^{+}A\right)\label{eq:chain-recursion}
\end{eqnarray}
Thus, we are left with solving a smaller problem $A_{1}$, and we
approximate the value of $\text{ld}\left(B^{+}A\right)$ using the
algorithm \texttt{SampleLogDet}. ST preconditioners are appealing
for this task: they guarantee that $A_{1}$ is substantially smaller
than $A$, so the recursion completes in $\mathcal{O}\left(\log n\right)$
steps. Furthermore, computing the vector product $B^{+}Ax$ is itself
efficient (in can be done approximated in near-linear time), so we
can apply Theorem \ref{thm:preconditioning-approx}. We formalize
the notion of chain of preconditioners by reintroducing some material
from \cite{Koutis2010}.

\begin{algorithm}
\begin{raggedright} Algorithm \textbf{UltraLogDet}($A$,$\epsilon$,$\eta$):

\end{raggedright}

\begin{raggedright} If $A$ is of a small size ($<$100), directly
compute $\text{ld}\left(A\right)$ with a dense Cholesky factorization.

\end{raggedright}

\begin{raggedright} Compute $B=$\textbf{IncrementalSparsify($A$)}

\end{raggedright}

\begin{raggedright} Compute $D,A'=$\textbf{PartialCholesky($B$)}

\end{raggedright}

\begin{raggedright} $\eta\leftarrow\min\left(\frac{\epsilon}{8\kappa^{3}\kappa\left(B\right)},\frac{1}{2\kappa}\right)$

\end{raggedright}

\begin{raggedright} $p\leftarrow8\left(\frac{1}{\epsilon}+\frac{1}{n\epsilon^{2}}\right)\log\left(\eta^{-1}\right)\log^{2}\left(\delta^{-1}\right)$

\end{raggedright}

\begin{raggedright} $l\leftarrow\delta^{-1}\log\left(\frac{2}{\epsilon\delta}\right)$

\end{raggedright}

\begin{raggedright} Compute $s=$\textbf{PreconditionedLogDetMonteCarlo($B,A,\eta,p,l$)}

\end{raggedright}

\begin{raggedright} Return $s+\log\left|D\right|+$\textbf{UltraLogDet($A'$,$\epsilon$,$\eta$)}

\end{raggedright}

\caption{Sketch of the main algorithm\label{alg:The-main-algorithm}}
\end{algorithm}

\begin{definition}
Definition 4.2 from \cite{Koutis2011}. Good preconditioning chain.
Let $d\in\mathbb{N}^{*}$, $\mathcal{C}=\left\{ A_{1}=A,B_{1},A_{2},B_{2},A_{3}\dots B_{d-1},A_{d}\right\} $
be a chain of graphs and $\mathcal{K}=\left(\kappa_{1}\cdots\kappa_{d-1}\right)\in\mathbb{R}_{+}^{d-1}$.
We say that $\left\{ \mathcal{C},\mathcal{K}\right\} $ is a good
preconditioning chain for $A$ if there exists $\mathcal{U}=\left(\mu_{1}\cdots\mu_{d}\right)\in\mathbb{N}_{+}^{d}$
so that: 
\begin{enumerate}
\item $A_{i}\preceq B_{i}\preceq\kappa_{i}A_{i}$ . 
\item $A_{i+1}$= \texttt{GreedyElimination}$\left(B_{i}\right)$ . 

\item The number of edges of $A_{i}$ is less than $\mu_{i}$. 
\item $\mu_{1}=\mu_{2}=m$ where $m$ is the number of edges of $A$. 
\item $\mu_{i}/\mu_{i+1}\geq c_{r}\left\lceil \sqrt{\kappa_{i}}\right\rceil $
for some constant $c_{r}$. 
\item $\kappa_{i+1}\leq\kappa_{i}$. 
\item $\mu_{d}$ is smaller than some fixed arbitrary constant. 
\end{enumerate}
\end{definition}
Good chains exist, as found by Koutis, Miller and Peng: 
\begin{lemma}
\label{lem:good-chain} (Lemma 4.5 from \cite{Koutis2011}) Given
a graph $A$, the algorithm \texttt{BuildChain}$\left(A,p\right)$
from \cite{Koutis2011} produces with probability $1-p$ a good preconditioning
chain $\left\{ \mathcal{C},\mathcal{K}\right\} $ such that $\kappa_{1}=\tilde{O}\left(\log^{2}n\right)$
and $\kappa_{i}=\kappa_{c}$ for all $i\geq2$ for some constant $\kappa_{c}$.
The length of the chain is $d=\mathcal{O}\left(\log n\right)$ and
the algorithm runs in expected time $\tilde{O}\left(m\log n\right).$

\end{lemma}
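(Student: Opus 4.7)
The plan is to construct the chain iteratively: at each level $i$ we produce $B_i$ from $A_i$ as a sparse spectral approximation, and then set $A_{i+1}=\texttt{GreedyElimination}(B_i)$. Two things must be verified simultaneously: (a) that $B_i$ meets the spectral guarantee $A_i\preceq B_i\preceq\kappa_i A_i$ while being sparse enough that \texttt{GreedyElimination} substantially shrinks it, and (b) that the sequence of $\kappa_i$ and edge counts $\mu_i$ satisfies invariants 1--7. I would treat the first level specially (with an aggressive $\kappa_1$) and all later levels uniformly (with a constant $\kappa_c$), then run an induction on $i$.

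For the sparsification step, I would start from a low-stretch spanning tree $T_i$ of $A_i$ in the sense of Abraham et al., so that $\mathrm{st}_{T_i}(A_i)=\tilde O(m_i\log n)$. The preconditioner $B_i$ is built by upscaling $T_i$ by the factor $\kappa_i$ and then including each off-tree edge $e$ independently with probability $p_e$ proportional to its $T_i$-stretch (rescaled so that $\sum_e p_e =\tilde O(m_i\log n/\kappa_i)$) and reweighted by $1/p_e$. A matrix Chernoff / Rudelson-type argument, applied in the $T_i$-norm, then gives $A_i\preceq B_i\preceq\kappa_i A_i$ with probability $1-p/d$; a union bound over the $d=O(\log n)$ levels yields the overall $1-p$ guarantee.

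For the reduction step, the crucial observation is that $B_i$ is structurally "a tree plus $\tilde O(m_i\log n/\kappa_i)$ extra edges," so the number of vertices with $B_i$-degree at least three is at most a constant multiple of that off-tree count. Since \texttt{GreedyElimination} successively removes degree-1 and degree-2 vertices without fill beyond the surviving core, we get $\mu_{i+1}=\tilde O(m_i\log n/\kappa_i)$. Choosing $\kappa_1=\tilde\Theta(\log^2 n)$ collapses $\mu_2$ to $\tilde O(m/\log n)$, and for $i\geq 2$ taking $\kappa_c$ a sufficiently large constant enforces $\mu_i/\mu_{i+1}\geq c_r\lceil\sqrt{\kappa_c}\rceil$, hence also $\kappa_{i+1}\leq\kappa_i$. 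A geometric series on $\mu_i$ then forces termination in $d=O(\log n)$ levels with $\mu_d=O(1)$, giving invariants 3--7. The running time per level is dominated by the low-stretch tree construction in $\tilde O(m_i\log n)$ and linear-time sampling and elimination; summing the geometric $\sum_i m_i$ yields $\tilde O(m\log n)$.

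The hard part will be the quantitative matrix concentration in the sparsification step: one must track the variance of the rescaled edge Laplacians in the $A_i$-norm and show that the scaled tree backbone dominates enough of that variance to make the Chernoff tail small at the prescribed $\kappa_i$. This is exactly the technical core that Koutis, Miller and Peng streamlined over Spielman--Teng, and once it is in hand the chain invariants and the running time follow cleanly by the induction sketched above.
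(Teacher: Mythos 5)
This lemma is not proved in the paper at all: it is imported verbatim as Lemma 4.5 of \cite{Koutis2011} and used as a black box, so the relevant comparison is between your sketch and the Koutis--Miller--Peng argument itself. Your outline does track their construction correctly at a high level (low-stretch spanning tree, scale the tree by $\kappa_i$, sample off-tree edges with probability proportional to stretch and reweight, apply matrix concentration, then \texttt{GreedyElimination} on the tree-plus-few-edges structure, and recurse with a large first-level $\kappa_1$ and constant $\kappa_c$ afterwards). But as a proof it has a genuine gap that you yourself flag: the matrix Chernoff/Rudelson concentration step, with the correct sampling probabilities (stretch-based upper bounds on effective resistance) and the correct count of sampled edges, is exactly the content of the lemma, and you defer it rather than prove it. Without it, neither the spectral inequality $A_i\preceq B_i\preceq\kappa_i A_i$ nor the size bound on the eliminated core is established, so invariants 1, 3 and 5 of the chain definition remain unverified.

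There are also two bookkeeping points that would need repair even as a sketch. First, the sampled off-tree edge count is $\tilde{O}\bigl(\mathrm{st}_{T_i}(A_i)\log n/\kappa_i\bigr)$, i.e.\ it carries an extra $\log n$ from the concentration argument on top of the $\tilde{O}(m_i\log n)$ total stretch; this is precisely why the first level needs $\kappa_1=\tilde{O}(\log^2 n)$ rather than $\log n$, and your accounting should make that factor explicit or the recursion does not shrink. Second, the chain definition used in this paper fixes $\mu_1=\mu_2=m$, so your statement that choosing $\kappa_1$ ``collapses $\mu_2$ to $\tilde{O}(m/\log n)$'' is inconsistent with the invariants you are trying to verify (the collapse happens at $\mu_3$ and beyond). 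If your goal is to justify the lemma within this paper, the honest course is the one the authors take: cite \cite{Koutis2011} (and \cite{Koutis2010} for \texttt{GreedyElimination}) and do not attempt to reprove the sparsification machinery.
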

These chains furthermore can be used as good preconditioners for conjugate
gradient and lead to near-linear algorithms for approximate inversion
(Lemma 7.2 from \cite{Koutis2010}). This remarkable result has been
significantly strengthened in the previous years, so that SDD systems
can be considered to be solved in (expected) linear time. 
\begin{lemma}
\label{lem:linear-precond-existence}(Theorem 4.6 from \cite{Koutis2011}).
Given $A\in SDD_{n}$ with $m$ non-zero entries, $b\in\mathbb{R}^{n}$
and $\nu>0$, a vector $x$ such that $\left\Vert x-A^{+}b\right\Vert _{A}<\nu\left\Vert A^{+}b\right\Vert _{A}$
can be computed in expected time $\tilde{O}\left(m\log n\log\left(1/\nu\right)\right)$. 
\end{lemma}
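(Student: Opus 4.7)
The plan is to use preconditioned conjugate gradient (PCG) recursively on the chain $\mathcal{C}=\{A_1,B_1,A_2,B_2,\dots,A_d\}$ produced by \texttt{BuildChain} from Lemma~\ref{lem:good-chain}. Standard PCG analysis says that if $A\preceq B\preceq\kappa A$, then PCG applied to $Ax=b$ with preconditioner $B$ reaches relative $A$-norm error $\nu$ in $O(\sqrt{\kappa}\log(1/\nu))$ iterations, each requiring one multiplication by $A$ and one application of $B^{+}$.

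I would solve $A_1 x=b$ by PCG with preconditioner $B_1$. To apply $B_1^{+}$ in each iteration I exploit the partial Cholesky decomposition $B_1 = P L C L^T P^T$ produced by \texttt{GreedyElimination}, which reduces solving $B_1 y = z$ to solving a system on the smaller Laplacian $A_2 = L_{A_1}$ (the lower-right block of $C$) plus $O(\mu_1)$ work for the forward/backward substitutions. The latter is itself solved recursively by PCG with preconditioner $B_2$, and so on down the chain of depth $d=O(\log n)$; at the bottom level, $\mu_d$ is a constant and the system is solved in $O(1)$ time.

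The total cost then satisfies a recursion of the form
\[
T_i(\nu) \;=\; O\!\bigl(\sqrt{\kappa_i}\log(1/\nu)\bigr)\cdot\bigl(\mu_i + T_{i+1}(\nu')\bigr),
\]
with $T_d=O(1)$. Using the chain properties $\mu_i/\mu_{i+1}\geq c_r\lceil\sqrt{\kappa_i}\rceil$ and $\kappa_i=O(1)$ for $i\geq 2$, one outer iteration at level $i$ triggers $O(\sqrt{\kappa_i})$ iterations at level $i{+}1$, each of which costs at most $O(\mu_i/\sqrt{\kappa_i})$ (up to the constant $c_r$). The per-level work therefore contracts geometrically by a factor $1/c_r<1$, so the work per top-level iteration telescopes to $O(\mu_1)=O(m)$. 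Multiplying by the $O(\sqrt{\kappa_1}\log(1/\nu))=\tilde{O}(\log n\,\log(1/\nu))$ top-level iterations from $\sqrt{\kappa_1}=\tilde{O}(\log n)$ yields the advertised $\tilde{O}(m\log n\,\log(1/\nu))$ total running time.

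The main obstacle is that the classical PCG convergence bound assumes the preconditioner is applied exactly, whereas here each inner application of $B_i^{+}$ is produced by a recursive approximate solve. I would handle this by either invoking a flexible/recursive variant of PCG that tolerates a non-stationary preconditioner, or by setting the inner tolerance $\nu'$ to a small polynomial in $\nu/\kappa_i$ so that the approximate operator is spectrally $O(\nu')$-close to $B_i^{+}$, which in turn inflates the PCG iteration count only by a constant. The fact that $\kappa_i$ is bounded by a constant for $i\geq 2$ is crucial: it prevents the nested $\log(1/\nu_i)$ factors from compounding across the $O(\log n)$ levels, and keeps the cost of the inner tolerance tightening within the stated $\tilde{O}$.
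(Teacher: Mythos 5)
You should first note that the paper does not actually prove this lemma: it is imported verbatim as Theorem 4.6 of Koutis, Miller and Peng \cite{Koutis2011}, so there is no internal proof to compare against. Your sketch is in effect an attempt to reconstruct the KMP solver analysis, and its architecture is the right one (solve along the chain of Lemma \ref{lem:good-chain}, reduce an application of $B_i^{+}$ to a solve in $A_{i+1}$ via the partial Cholesky factorization, and use $\mu_i/\mu_{i+1}\geq c_r\lceil\sqrt{\kappa_i}\rceil$ to make the per-level work decay geometrically so that the total is $\tilde{O}(\sqrt{\kappa_1}\,m\log(1/\nu))=\tilde{O}(m\log n\log(1/\nu))$).

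The genuine gap is exactly the point you flag and then dispose of too quickly: the inexactness of the recursive preconditioner. The $O(\sqrt{\kappa}\log(1/\nu))$ bound for PCG assumes the preconditioner is a fixed symmetric positive (semi)definite operator applied exactly; a recursive CG solve is neither exact nor even a fixed linear operator, so the bound does not apply as stated, and neither of your two fixes rescues the claimed running time. ``Flexible'' PCG has no $O(\sqrt{\kappa})$-type guarantee of the strength you need, and tightening the inner tolerance to a polynomial in $\nu/\kappa_i$ makes every level of the recursion carry a factor $\log(1/\nu')$; with $d=\Theta(\log n)$ levels this compounds to $\bigl(\log(\kappa(A)/\nu)\bigr)^{\Theta(\log n)}$, which is not hidden by the $\tilde{O}$ (that only absorbs $\mathrm{poly}(\log\log n)$) and so does not yield $\tilde{O}(m\log n\log(1/\nu))$. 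The resolution in \cite{Koutis2011} (building on \cite{Spielman2009a}) is different: the inner systems are solved only to a \emph{fixed constant} relative accuracy by a linear iterative method (preconditioned Chebyshev rather than CG), so that the recursively defined approximate solver at level $i$ is itself a fixed linear operator spectrally equivalent to $B_i^{+}$ up to constant factors; this inflates the effective condition number at each level only by a constant, the iteration counts stay $O(\sqrt{\kappa_i})$, and the $\log(1/\nu)$ dependence enters only once, at the outermost level (equivalently, by iterative refinement wrapped around a constant-accuracy solver). Without this spectral-equivalence argument for the recursive operator, your cost recursion with a $\log(1/\nu)$ factor at every level does not establish the lemma.
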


It should now become clear how we can combine a good chain with the
Algorithm \texttt{PreconditionedLogDetMonteCarlo}. We start by building
a chain. The partial Cholesky factorizations at each step of the chain
provide an upper bound on $\mbox{ld}\left(A\right)$. We then refine
this upper bound by running \texttt{Preconditioned\-LogDetMonteCarlo}
at each state of the chain to approximate $\mbox{ld}\left(B_{i}^{+}A_{i}\right)$
with high probability. The complete algorithm is presented in Algorithm
\ref{alg:The-main-algorithm}. We now have all the tools required
to prove Theorem \ref{thm:ultra_main}.

\textbf{Proof of Theorem \ref{thm:ultra_main}.} First, recall that
we can consider either an SDD or its grounded Laplacian thanks to
the relation $\log |A|=\ld L_{A}$. Call $A_{1}=L_{A}$ the first element
of the chain. In this proof, all the matrices will be Laplacian from
now on. Using Lemma \ref{lem:good-chain}, consider $\mathcal{C}=\left\{ A_{1}=A,B_{1},A_{2},\dots A_{d}\right\} $
a good chain for $A$, with $d=\mathcal{O}\left(\log n\right)$. More
precisely, since $A_{i+1}$= \texttt{Greedy\-Elimination}$\left(B_{i}\right)$,
the Laplacian $B_{i}$ can be factored as: 
\[
B_{i}=P_{i}L_{i}\left(\begin{array}{cc}
D^{\left(i\right)} & 0\\
0 & A_{i+1}
\end{array}\right)L_{i}^{T}P_{i}^{T}
\]
with $P_{i}$ a permutation matrix, $L_{i}$ a lower triangular matrix
with $1$ one the diagonal and $D^{\left(i\right)}$a positive definite
diagonal matrix. The matrix $D^{\left(i\right)}$ is an immediate
by-product of running the algorithm \texttt{GreedyElimination} and
can be obtained when forming the chain $\mathcal{C}$ at no additional
cost.

From the discussion at the start of the section, it is clear that
$\ld B_{i}=\sum_{k}\log D_{k}^{\left(i\right)}+\ld A_{i+1}$. From
the discussion in Section \ref{sec:Preconditioned-log-determinants},
the log-determinant of $A$ is: 
\begin{eqnarray*}
\log |A| & = & \ld A_{1}\\
 & = & \ld B_{1}+\ld\left(B_{1}^{+}A_{1}\right)\\
 & = & \sum_{k}\log D_{k}^{\left(1\right)}+\ld A_{2}+\ld\left(B_{1}^{+}A_{1}\right)\\
 & ... & \\
 & = & \ld A_{d}+\sum_{i=1}^{d}\left(\sum_{k}\log D_{k}^{\left(i\right)}\right)+\sum_{i=1}^{d}\ld\left(B_{i}^{+}A_{i}\right)
\end{eqnarray*}
The term $\ld A_{d}$ can be estimated by dense Cholesky factorization
at cost $\mathcal{O}\left(1\right)$, and the diagonal Cholesky terms
$\sum_{k}\log D_{k}^{\left(i\right)}$ are already computed from the
chain. We are left with estimating the $d$ remainders $\text{ld}\left(B_{i}^{+}A_{i}\right)$.
By construction, $A_{i}\preceq B_{i}\preceq\kappa_{i}A_{i}$ and by
Lemma \ref{lem:linear-precond-existence}, there exists an operator
$C_{i}$ so that $\left\Vert C_{i}\left(b\right)-B_{i}^{+}b\right\Vert _{B_{i}}<\nu\left\Vert B_{i}^{+}b\right\Vert _{B_{i}}$
for all $b$ with a choice of relative precision $\nu=\frac{\epsilon}{16\kappa_{i}^{3}\kappa\left(B_{i}\right)}$.

This relative precision depends on the condition number $\kappa\left(B_{i}\right)$
of $B_{i}$. We can coarsely relate this condition number to the condition
number of $A_{1}$by noting the following: 
\begin{itemize}
\item Since $A_{i}\preceq B_{i}\preceq\kappa_{i}A_{i}$ by construction,
$\kappa\left(B_{i}\right)\leq\kappa_{i}\kappa\left(A_{i}\right)$ 
\item For diagonally dominant matrices or Laplacian matrices, the condition
number of the partial Cholesky factor is bounded by the condition
number of the original matrix. This can be seen by analyzing one update
in the Cholesky factorization. Given a partially factorized matrix
$\tilde{A}=\left(\begin{array}{ccc}
I_{p} & 0 & 0\\
0 & a & b\\
0 & b^{T} & S
\end{array}\right)$, after factorization, the next matrix is $\left(\begin{array}{cc}
I_{p+1} & 0\\
0 & S-a^{-1}bb^{T}
\end{array}\right)$. The spectrum of the Schur complement $S-a^{-1}bb^{T}$is bounded
by the spectrum of $\left(\begin{array}{cc}
a & b\\
b^{T} & S
\end{array}\right)$ (see Corollary 2.3 in \cite{Zhang2005}) and thus its condition number
is upper bounded by that of $\tilde{A}$. 
\end{itemize}
As a consequence, we have for all $i$: $\kappa\left(A_{i+1}\right)\leq\kappa\left(B_{i}\right)\leq\kappa_{i}\kappa\left(A_{i}\right)\leq\prod_{j=1}^{i}\kappa_{j}\kappa\left(A_{1}\right) \\*=\tilde{O}\left(\kappa_{1}\kappa_{c}^{i-1}\kappa\left(A\right)\right)$
with $\kappa_{c}$ the constant introduced in Lemma \ref{lem:good-chain}.
This coarse analysis gives us the bound: 
\[
\kappa\left(B_{i}\right)\leq\tilde{O}\left(\kappa_{c}^{\log n}\log^{2}n~\kappa\left(A\right)\right)=\tilde{O}\left(n^{\log\kappa_{c}}\log^{2}n\ \kappa\left(A\right)\right).
\]
Consider the relative precision $\tilde{\nu}=\tilde{\mathcal{O}}\left(n^{-\log\kappa_{c}}\log^{-8}n\frac{\epsilon}{\kappa\left(A\right)}\right)$
so that $\tilde{\nu}\leq\nu_{i}$ for all $i$. Constructing the operator
$C_{i}$ is a byproduct of forming the chain \emph{$\mathcal{C}$.
}By Theorem \ref{thm:det-sampling-theorem}, each remainder $\text{ld}\left(B_{i}^{+}A_{i}\right)$
can be approximated to precision $\epsilon$ with probability at least
$1-\eta$ using Algorithm \ref{alg:SampleLogDet}. Furthermore, this
algorithm works in expected time 
\begin{eqnarray*}
 &  & \tilde{O}\left(m\log n\log\left(1/\tilde{\nu}\right)\kappa_{1}\left(\frac{1}{\epsilon}+\frac{1}{n\epsilon^{2}}\right)\log\left(\frac{n\kappa_{1}}{\tilde{\nu}}\right)\log^{2}\left(\kappa_{1}\right)\log\left(\eta^{-1}\right)\right)\\
 &  & =\tilde{O}\left(m\log^{3}n\left(\frac{1}{\epsilon}+\frac{1}{n\epsilon^{2}}\right)\log^{2}\left(\frac{n\kappa\left(A\right)}{\epsilon}\right)\log\left(\eta^{-1}\right)\right)
\end{eqnarray*}
By a union bound, the result also holds on the sum of all the $\log n$
approximations of the remainders. We can simplify this bound a little
by assuming that $\epsilon\geq n^{-1}$, which then becomes $\tilde{O}\left(m\epsilon^{-1}\log^{3}n\log^{2}\left(\frac{n\kappa\left(A\right)}{\epsilon}\right)\log\left(\eta^{-1}\right)\right)$. 


\subsection{Stretch bounds on preconditioners\label{sub:Stretch-bounds}}

How good is the estimate provided by the preconditioner? Intuitively,
this depends on how well the preconditioner $L_{H}$ approximates
the graph $L_{G}$. This notion of quality of approximation can be
formalized by the notion of \emph{stretch}. This section presents
a deterministic bound on the PLD of $L_{G}$ based on the PLD of $L_{H}$
and the stretch of $G$ relative to $H$. This may be useful in practice
as it gives a (tight) interval for the PLD before performing any Monte-Carlo
estimation of the residual.

The stretch of a graph is usually defined with respect to a (spanning)
tree. In our analysis, it is convenient and straightforward to generalize
this definition to arbitrary graphs. To our knowledge, this straightforward
extension is not considered in the literature, so we feel compelled
to properly introduce it. 
\begin{definition}
\emph{Generalized stretch}.\label{Generalized-stretch} Consider $\mathcal{V}$
a set of vertices, $G=\left(\mathcal{V},\,\mathcal{E}_{G}\right),\, H=\left(\mathcal{V},\,\mathcal{E}_{H}\right)$
connected graphs over the same set of vertices, and $L_{G}$, $L_{H}$
their respective Laplacians. The stretch of $G$ with respect to $H$
is the sum of the effective resistances of each edge of graph $G$
with respect to graph $H$, 
\[
\text{st}_{H}\left(G\right)=\sum_{\left(u,v\right)\in\mathcal{E}_{G}}L_{G}\left(u,v\right)\left(\mathcal{X}_{u}-\mathcal{X}_{v}\right)^{T}L_{H}^{+}\left(\mathcal{X}_{u}-\mathcal{X}_{v}\right)
\]
with $\mathcal{X}_{u}\in\mathbb{R}^{n}$ the unit vector that is $1$
at position $u$, and zero otherwise. 
\end{definition}

If the graph $H$ is a tree, this is a standard definition of stretch,
because the effective resistance $\left(\mathcal{X}_{u}-\mathcal{X}_{v}\right)^{T}L_{H}^{+}\left(\mathcal{X}_{u}-\mathcal{X}_{v}\right)$
between vertices $u$ and $v$ is the sum of all resistances over
the unique path between $u$ and $v$ (see Lemma 2.4 in \cite{Spielman2009b}).
Furthermore, the arguments to prove Theorem 2.1 in \cite{Spielman2009b}
carry over to our definition of stretch. For the sake of completeness,
we include this result: 
\begin{lemma}
\label{lem:stretch-trace}(Straightforward generalization of Theorem
2.1 in \cite{Spielman2009b}) Let $G=\left(\mathcal{V},\,\mathcal{E}_{G}\right),\, H=\left(\mathcal{V},\,\mathcal{E}_{H}\right)$
be connected graphs over the same set of vertices, and $L_{G}$, $L_{H}$
their respective Laplacians. Then: 
\[
\text{st}_{H}\left(G\right)=\text{Tr}\left(L_{H}^{+}L_{G}\right)
\]
with $L_{H}^{+}$the pseudo-inverse of $L_{H}$.\end{lemma}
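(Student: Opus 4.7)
The plan is to expand the Laplacian $L_G$ as a weighted sum of rank-one outer products over its edges, and then reduce $\text{Tr}(L_H^{+} L_G)$ to a sum of scalar quadratic forms using the cyclic property of the trace. Concretely, a weighted graph Laplacian admits the standard edge decomposition
\[
L_G \;=\; \sum_{(u,v)\in\mathcal{E}_G} w_{uv}\,(\mathcal{X}_u-\mathcal{X}_v)(\mathcal{X}_u-\mathcal{X}_v)^T,
\]
where $w_{uv}$ is the weight associated with edge $(u,v)$ (and in the sign convention used in Definition \ref{Generalized-stretch}, $w_{uv}$ is exactly the quantity written as $L_G(u,v)$). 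This identity is immediate from checking both diagonal and off-diagonal entries, and it is the key algebraic fact I would invoke without further comment.

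Next, I would left-multiply by $L_H^{+}$ and take traces, pulling the finite sum outside by linearity:
\[
\Tr\!\left(L_H^{+} L_G\right) \;=\; \sum_{(u,v)\in\mathcal{E}_G} w_{uv}\,\Tr\!\left(L_H^{+}(\mathcal{X}_u-\mathcal{X}_v)(\mathcal{X}_u-\mathcal{X}_v)^T\right).
\]
Then, using the cyclic property $\Tr(M vv^T)=v^T M v$ applied to the vector $v=\mathcal{X}_u-\mathcal{X}_v$ and the symmetric matrix $M=L_H^{+}$, each summand collapses to a scalar quadratic form
\[
w_{uv}\,(\mathcal{X}_u-\mathcal{X}_v)^T L_H^{+}(\mathcal{X}_u-\mathcal{X}_v),
\]
which is precisely the per-edge contribution in Definition \ref{Generalized-stretch}. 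Summing over $(u,v)\in\mathcal{E}_G$ yields $\text{st}_H(G)$, completing the identity.

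There is essentially no hard step: the proof is purely a bookkeeping manipulation once the edge decomposition of $L_G$ is in hand. The only point deserving a remark is that although $L_H$ is singular (its null space is spanned by $\mathbf{1}$ since $H$ is connected), each vector $\mathcal{X}_u-\mathcal{X}_v$ lies in $\mathbf{1}^{\perp}$, so applying the pseudo-inverse $L_H^{+}$ is unambiguous and the quadratic forms $(\mathcal{X}_u-\mathcal{X}_v)^T L_H^{+}(\mathcal{X}_u-\mathcal{X}_v)$ coincide with genuine effective resistances in $H$. This also confirms that the resulting expression recovers the classical tree-stretch definition when $H$ is a spanning tree, as already noted just before the lemma.
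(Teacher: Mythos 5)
Your proof is correct and follows essentially the same route as the paper's: decompose $L_G$ into weighted edge outer products, use linearity of the trace, and collapse each term via $\Tr\left(L_H^{+}vv^{T}\right)=v^{T}L_H^{+}v$ to recover the per-edge effective resistances. The extra remark about $\mathcal{X}_u-\mathcal{X}_v$ lying in $\mathbf{1}^{\perp}$ is a nice clarification but not needed beyond what the paper already does.
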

\begin{proof}
We denote $E\left(u,v\right)$ the Laplacian unit matrix that is $1$
in position $u,v$: $E\left(u,v\right)=\left(\mathcal{X}_{u}-\mathcal{X}_{v}\right)\left(\mathcal{X}_{u}-\mathcal{X}_{v}\right)^{T}$.
This is the same arguments as the original proof: 
\begin{eqnarray*}
\text{Tr}\left(L_{H}^{+}L_{G}\right) & = & \sum_{\left(u,v\right)\in\mathcal{E}_{G}}L_{G}\left(u,v\right)\text{Tr}\left(E\left(u,v\right)L_{H}^{+}\right)\\
 & = & \sum_{\left(u,v\right)\in\mathcal{E}_{G}}L_{G}\left(u,v\right)\text{Tr}\left(\left(\mathcal{X}_{u}-\mathcal{X}_{v}\right)\left(\mathcal{X}_{u}-\mathcal{X}_{v}\right)^{T}L_{H}^{+}\right)\\
 & = & \sum_{\left(u,v\right)\in\mathcal{E}_{G}}L_{G}\left(u,v\right)\left(\mathcal{X}_{u}-\mathcal{X}_{v}\right)^{T}L_{H}^{+}\left(\mathcal{X}_{u}-\mathcal{X}_{v}\right)\\
 & = & \text{st}_{H}\left(G\right)
\end{eqnarray*}

\end{proof}
A consequence is $\text{st}_{H}\left(G\right)\geq\text{Card}\left(\mathcal{E}_{G}\right)\geq n-1$
for connected $G$ and $H$ with $L_{G}\succeq L_{H}$, and that for
any connected graph $G$, $\text{st}_{G}\left(G\right)=n-1$. Scaling
and matrix inequalities carry over with the stretch as well. Given
$A,B,C$ connected graphs, and $\alpha,\beta>0$: 
\begin{align*}
\text{st}_{\alpha A}\left(\beta B\right) & =\alpha^{-1}\beta\text{st}_{A}\left(B\right)\\
L_{A}\preceq L_{B} & \Rightarrow\text{st}_{A}\left(C\right)\geq\text{st}_{B}\left(C\right)\\
L_{A}\preceq L_{B} & \Rightarrow\text{st}_{C}\left(A\right)\leq\text{st}_{C}\left(B\right)
\end{align*}

\begin{lemma}
For any connected graph $G$, $\text{st}_{G}\left(G\right)=n-1$.\end{lemma}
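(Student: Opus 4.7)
The plan is to reduce to the trace identity of Lemma \ref{lem:stretch-trace} and then exploit the projector structure of $L_G^{+}L_G$. Concretely, first I would specialize Lemma \ref{lem:stretch-trace} to the case $H=G$, which instantly yields
\[
\text{st}_{G}(G) \;=\; \Tr\!\bigl(L_{G}^{+}L_{G}\bigr).
\]
So the problem is reduced to computing the trace of $L_G^{+}L_G$.

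Next I would invoke the standard identification of $L_G^{+}L_G$ as the orthogonal projector onto $\text{range}(L_G)$. This is a general property of the Moore--Penrose pseudo-inverse of a symmetric matrix: if $L_G = U\Lambda U^{T}$ is an eigendecomposition with the columns of $U$ orthonormal, then $L_G^{+} = U\Lambda^{+}U^{T}$, where $\Lambda^{+}$ inverts the non-zero eigenvalues and leaves zeros in place; hence $L_G^{+}L_G = U \mathbf{1}_{\{\lambda_i \neq 0\}} U^{T}$, the projector onto the span of the eigenvectors with non-zero eigenvalues, i.e.\ onto $\text{range}(L_G) = \ker(L_G)^{\perp}$.

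Then I would use the assumption that $G$ is connected: the kernel of a connected graph's Laplacian is exactly the one-dimensional span of the all-ones vector $\mathbf{1}$, which is a classical fact (the nullspace dimension of $L_G$ equals the number of connected components of $G$). Consequently $\text{range}(L_G)$ has dimension $n-1$, and the trace of the orthogonal projector onto an $(n-1)$-dimensional subspace is $n-1$. Combining,
\[
\text{st}_{G}(G) \;=\; \Tr\!\bigl(L_G^{+}L_G\bigr) \;=\; \dim\text{range}(L_G) \;=\; n-1.
\]

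There is essentially no obstacle here; the only mild subtlety is making sure the pseudo-inverse identity $L_G^{+}L_G = \Pi_{\text{range}(L_G)}$ is invoked correctly for a symmetric positive semi-definite matrix, and that connectedness is used precisely to pin down $\dim\ker(L_G) = 1$. Both are standard and require no further machinery beyond what the paper already cites.
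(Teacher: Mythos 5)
Your proof is correct and follows essentially the same route as the paper: both reduce $\text{st}_{G}(G)$ to $\Tr\left(L_{G}^{+}L_{G}\right)$ via Lemma \ref{lem:stretch-trace} and then use the spectral structure of $L_G$ together with connectedness (exactly $n-1$ positive eigenvalues) to conclude the trace is $n-1$. Your phrasing via the orthogonal projector onto $\mathrm{range}(L_G)$ is just a coordinate-free restatement of the paper's computation with the thin diagonalization $L_{G}=P\Delta P^{T}$.
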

\begin{proof}
Consider the diagonalization of $L_{G}$: $L_{G}=P\Delta P^{T}$ with
$P\in\mathbb{R}^{n\times n-1}$ and $\Delta=\text{diag}\left(\lambda_{1},\cdots,\lambda_{n-1}\right)$.
Then 
\[
\text{st}_{G}\left(G\right)=\text{Tr}\left(P\Delta P^{T}P\Delta^{-1}P^{T}\right)=\text{Tr}\left(I_{n-1}\right)=n-1
\]

\end{proof}
A number of properties of the stretch extend to general graphs using
the generalized stretch. In particular, the stretch inequality (Lemma
8.2 in \cite{Spielman2009a}) can be generalized to arbitrary graphs
(instead of spanning trees). 
\begin{lemma}
\label{lem:stretch-inequality}Let $G=\left(\mathcal{V},\,\mathcal{E}_{G}\right),\, H=\left(\mathcal{V},\,\mathcal{E}_{H}\right)$
be connected graphs over the same set of vertices, and $L_{G}$, $L_{H}$
their respective Laplacians. Then: 
\[
L_{G}\preceq\text{st}_{H}\left(G\right)L_{H}
\]
\end{lemma}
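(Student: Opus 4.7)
The plan is to bound $L_G$ edge-by-edge and then sum. First I would decompose the Laplacian of $G$ as a positively weighted sum of its edge Laplacians: $L_G = \sum_{(u,v)\in\mathcal{E}_G} w_{uv}\, E(u,v)$, where $w_{uv}>0$ is the weight of the edge $(u,v)$ (so that $w_{uv} = L_G(u,v)$ in the notation of Definition~\ref{Generalized-stretch}) and $E(u,v) = (\mathcal{X}_u-\mathcal{X}_v)(\mathcal{X}_u-\mathcal{X}_v)^T$. The point of this step is that every summand is rank-one and easy to compare to $L_H$ individually.

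The key lemma I would establish is the single-edge bound
\[
E(u,v) \;\preceq\; R_{uv}^H \cdot L_H, \qquad R_{uv}^H := (\mathcal{X}_u-\mathcal{X}_v)^T L_H^+ (\mathcal{X}_u-\mathcal{X}_v),
\]
i.e.\ that the effective resistance in $H$ between $u$ and $v$ is exactly the right scalar. To prove it, fix $x\in\mathbb{R}^n$ and write $x = x_0 + c\mathbf{1}$ with $x_0\in\mathrm{range}(L_H) = \mathbf{1}^\perp$ (using that $H$ is connected). Then $x^T E(u,v) x = (x_u-x_v)^2 = \bigl((\mathcal{X}_u-\mathcal{X}_v)^T x_0\bigr)^2$, and since $x_0 = L_H^+ L_H x_0$ we can rewrite this as $\bigl((\mathcal{X}_u-\mathcal{X}_v)^T L_H^+ (L_H x_0)\bigr)^2$. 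Applying Cauchy--Schwarz in the inner product $\langle a,b\rangle_{L_H^+} := a^T L_H^+ b$ (which is positive semidefinite on $\mathbf{1}^\perp$) to the pair $(\mathcal{X}_u-\mathcal{X}_v,\, L_H x_0)$ gives
\[
\bigl((\mathcal{X}_u-\mathcal{X}_v)^T L_H^+ (L_H x_0)\bigr)^2 \leq R_{uv}^H \cdot (L_H x_0)^T L_H^+ (L_H x_0) = R_{uv}^H \cdot x_0^T L_H x_0 = R_{uv}^H \cdot x^T L_H x,
\]
which is the required pointwise inequality.

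Finally, I would combine everything: multiplying the edge bound by $w_{uv}\geq 0$ preserves the ordering, so summing over $(u,v)\in\mathcal{E}_G$ gives
\[
L_G \;=\; \sum_{(u,v)\in\mathcal{E}_G} w_{uv}\, E(u,v) \;\preceq\; \Bigl(\sum_{(u,v)\in\mathcal{E}_G} w_{uv}\, R_{uv}^H\Bigr) L_H \;=\; \mathrm{st}_H(G)\cdot L_H,
\]
where the last equality is just Definition~\ref{Generalized-stretch}.

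The only delicate point is the single-edge Cauchy--Schwarz step, because $L_H$ is singular and one must argue on $\mathbf{1}^\perp$: the decomposition $x = x_0 + c\mathbf{1}$ handles this cleanly since both sides of the inequality are invariant under shifts of $x$ by constants (on the left, $x_u-x_v$ is unchanged; on the right, $L_H\mathbf{1}=0$). Once the single-edge inequality is proved, the rest is just nonnegative summation, so no further obstacle arises.
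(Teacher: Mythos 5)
Your proof is correct and follows essentially the same route as the paper: decompose $L_G$ into weighted edge Laplacians, bound each rank-one term by the effective resistance times $L_H$, and sum. The only difference is that you establish the single-edge inequality directly via Cauchy--Schwarz in the $L_H^+$ inner product (with an explicit reduction to $\mathbf{1}^\perp$), whereas the paper invokes its Lemma~\ref{lem:simple-inequality}; your variant is, if anything, slightly more careful about the nullspace of $L_H$.
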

\begin{proof}
The proof is very similar to that of Lemma 8.2 in \cite{Spielman2009b},
except that the invocation of Lemma 8.1 is replaced by invoking Lemma
\ref{lem:simple-inequality} in Appendix B. The Laplacian $G$ can
be written as a linear combination of edge Laplacian matrices: 
\[
L_{G}=\sum_{e\in\mathcal{E}_{G}}\omega_{e}L\left(e\right)=\sum_{\left(u,v\right)\in\mathcal{E}_{G}}\omega_{\left(u,v\right)}\left(\mathcal{X}_{u}-\mathcal{X}_{v}\right)\left(\mathcal{X}_{u}-\mathcal{X}_{v}\right)^{T}
\]
and a positivity result on the Schur complement gives
\[
\left(\mathcal{X}_{u}-\mathcal{X}_{v}\right)\left(\mathcal{X}_{u}-\mathcal{X}_{v}\right)^{T}\preceq\Big(\left(\mathcal{X}_{u}-\mathcal{X}_{v}\right)^{T}L_{H}^{+}\left(\mathcal{X}_{u}-\mathcal{X}_{v}\right)\Big)L_{H}
\]
By summing all the edge inequalities, we get: 
\begin{eqnarray*}
L_{G} & \preceq & \sum_{\left(u,v\right)\in\mathcal{E}_{G}}\omega_{\left(u,v\right)}\left(\mathcal{X}_{u}-\mathcal{X}_{v}\right)^{T}L_{H}^{+}\left(\mathcal{X}_{u}-\mathcal{X}_{v}\right)L_{H}\\
 & \preceq & \text{st}_{H}\left(G\right)L_{H}
\end{eqnarray*}

\end{proof}
This bound is remarkable as it relates any pair of (connected) graphs,
as opposed to spanning trees or subgraphs. An approximation of the
generalized stretch can be quickly computed using a construct detailed
in \cite{Spielman2009}, as we will see below. We now introduce the
main result of this section: a bound on the PLD of $L_{G}$ using
the PLD of $L_{H}$ and the stretch. 
\begin{theorem}
\label{thm:stretch-pld-bounds}Let $G=\left(\mathcal{V},\,\mathcal{E}_{G}\right),\, H=\left(\mathcal{V},\,\mathcal{E}_{H}\right)$
be connected graphs over the same set of vertices, and $L_{G}$, $L_{H}$
their respective Laplacians. Assuming $L_{H}\preceq L_{G}$, then:
\begin{equation}
\text{ld}\left(L_{H}\right)+\log\left(\text{st}_{H}\left(G\right)-n+2\right)\leq\text{ld}\left(L_{G}\right)\leq\text{ld}\left(L_{H}\right)+\left(n-1\right)\log\left(\frac{\text{st}_{H}\left(G\right)}{n-1}\right)\label{eq:encadrement-1}
\end{equation}

This bound is tight.\end{theorem}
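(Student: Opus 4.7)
The plan is to reduce the inequality to a constrained extremal problem for the sum of logarithms of the generalized eigenvalues of the pair $(L_G, L_H)$, and then use Jensen's inequality (for the upper bound) and a boundary/extreme-point argument (for the lower bound).

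First, I would work on the common range $\mathbf{1}^\perp$ of $L_H$ and $L_G$. Since both graphs are connected, $L_H$ and $L_G$ have the same one-dimensional kernel, so the matrix $M = L_H^{+/2} L_G L_H^{+/2}$ (interpreted on $\mathbf{1}^\perp$) is well-defined, symmetric positive definite, and its $n-1$ eigenvalues $\mu_1,\dots,\mu_{n-1}$ are exactly the nontrivial generalized eigenvalues of $(L_G, L_H)$. Three immediate observations follow:
\begin{itemize}
\item The assumption $L_H \preceq L_G$ gives $M \succeq I$, so $\mu_i \geq 1$ for every $i$.
\item By Lemma \ref{lem:stretch-trace}, $\sum_i \mu_i = \mathrm{Tr}(L_H^+ L_G) = \mathrm{st}_H(G)$.
\item Since $L_G$ and $L_H$ share the same kernel and $M$ captures all positive eigenvalues of $L_H^{+/2} L_G L_H^{+/2}$, we obtain $\mathrm{ld}(L_G) - \mathrm{ld}(L_H) = \sum_{i=1}^{n-1} \log \mu_i$.
\end{itemize}
After these reductions the theorem becomes: among real numbers $\mu_1,\dots,\mu_{n-1} \geq 1$ with $\sum_i \mu_i = S$ (where $S = \mathrm{st}_H(G)$), bound $\sum_i \log \mu_i$.

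The upper bound is immediate from Jensen's inequality applied to the concave function $\log$: $\sum_i \log \mu_i \leq (n-1) \log(S/(n-1))$, which is exactly the right-hand side of \eqref{eq:encadrement-1}. The lower bound is the more delicate half and is where I expect to spend most effort. The function $(\mu_1,\dots,\mu_{n-1}) \mapsto \sum_i \log \mu_i$ is strictly concave, and we are minimizing it over the compact convex polytope $\{\mu_i \geq 1,\ \sum_i \mu_i = S\}$. By concavity, the minimum is attained at an extreme point of this polytope, and every extreme point has all but one coordinate saturated at $\mu_i = 1$; the remaining coordinate equals $S - (n-2)$. Plugging in gives the minimum value $\log(S - n + 2)$, which is the left-hand side of \eqref{eq:encadrement-1}.

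For tightness, I would exhibit matching examples. For the upper bound, scaling $G = \alpha H$ with $\alpha \geq 1$ makes every $\mu_i$ equal to $\alpha$, and both sides of the upper inequality equal $(n-1)\log \alpha$. For the lower bound, one should build $G$ from $H$ by adding (or heavily weighting) a single edge whose effective resistance in $H$ is large, producing a rank-one perturbation of $M - I$: only one generalized eigenvalue moves away from $1$, and it absorbs essentially all of the added stretch, driving $\sum_i \log \mu_i$ to $\log(S - n + 2)$. The main obstacle in the whole argument is the rigorous justification that the minimization over the polytope is attained at a vertex with the stated structure; this is where one must invoke strict concavity of $\log$ and the standard characterization of extreme points of $\{\mu_i \geq 1,\ \sum_i \mu_i = S\}$.
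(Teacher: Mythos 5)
Your proposal is correct and follows essentially the same path as the paper: reduce to the eigenvalues of $\sqrt{L_H}^{+}L_G\sqrt{L_H}^{+}$, use $\Tr(L_H^+L_G)=\text{st}_H(G)$ and Jensen for the upper bound, and minimize $\sum_i\log\mu_i$ over $\{\mu_i\ge 1,\ \sum_i\mu_i=S\}$ for the lower bound. The only difference is cosmetic: you solve the minimization by a concavity/extreme-point argument, while the paper does it directly via $\prod_i(1+u_i)\ge 1+\sum_i u_i$; both yield $\log(S-n+2)$.
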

\begin{proof}
This is an application of Jensen's inequality on $\text{ld}\left(L_{H}^{+}L_{G}\right)$.
We have $\text{ld}(L_{G})=\text{ld}(L_{H})+\text{ld}\left(L_{H}^{+}G\right)$
and $\text{ld}\left(L_{H}^{+}G\right)=\text{ld}\left(\sqrt{L_{H}}^{+}L_{G}\sqrt{L_{H}}^{+}\right)$
with $\sqrt{T}$ the matrix square root of $T$. From Lemma \ref{lem:Jensen-inequality-matrix-logarithm-1},
we have the following inequality: 
\begin{eqnarray*}
\text{ld}\left(\sqrt{L_{H}}^{+}L_{G}\sqrt{L_{H}}^{+}\right) & \leq & \left(n-1\right)\log\left(\frac{\text{Tr}\left(\sqrt{L_{H}}^{+}L_{G}\sqrt{L_{H}}^{+}\right)}{n-1}\right)\\
 & = & \left(n-1\right)\log\left(\frac{\text{Tr}\left(L_{H}^{+}L_{G}\right)}{n-1}\right)\\
 & = & \left(n-1\right)\log\left(\frac{\text{st}_{H}\left(G\right)}{n-1}\right)
\end{eqnarray*}

The latter equality is an application of Lemma \ref{lem:stretch-trace}.

The lower bound is slightly more involved. Call $\lambda_{i}$ the
positive eigenvalues of $\sqrt{L_{H}}^{+}L_{G}\sqrt{L_{H}}^{+}$ and
$\sigma=\text{st}_{H}\left(G\right)$. We have $1\leq\lambda_{i}$
from the assumption $L_{H}\preceq L_{G}$. By definition: $\text{ld}\left(L_{H}^{+}L_{G}\right)=\sum_{i}\log\lambda_{i}$.
Furthermore, we know from Lemma \ref{lem:stretch-trace} that $\sum_{i}\lambda_{i}=\sigma$.
The upper and lower bounds on $\lambda_{i}$ give: 
\begin{align*}
\text{ld}\left(L_{H}^{+}L_{G}\right)\geq & \min\,\sum_{i}\log\lambda_{i}\\
 & \text{s.t.}\,\lambda_{i}\geq1,\,\sum_{i}\lambda_{i}=\sigma
\end{align*}
Since there are precisely $n-1$ positive eigenvalues $\lambda_{i}$,
one can show that the minimization problem above has a unique minimum
which is $\log\left(\sigma-n+2\right)$.

To see that, consider the equivalent problem of minimizing $\sum_{i}\log\left(1+u_{i}\right)$
under the constraints $\sum_{i}u_{i}=\sigma-\left(n-1\right)$ and
$u_{i}\geq0$. Note that: 
\[
\sum_{i}\log\left(1+u_{i}\right)=\log\left(\prod_{i}\left[1+u_{i}\right]\right)=\log\left(1+\sum_{i}u_{i}+\text{Poly}\left(u\right)\right)
\]
with $\text{Poly}\left(u\right)\geq0$ for all $u_{i}\geq0$, so we
get: $\sum_{i}\log\left(1+u_{i}\right)\geq\log\left(1+\sum_{i}u_{i}\right)$
and this inequality is tight for $u_{1}=\sigma-\left(n-1\right)$
and $u_{i\geq2}=0$. Thus the vector $\lambda^{*}=\left(\sigma-n+2,\,1\cdots1\right)^{T}$
is (a) a solution to the minimization problem above, and (b) the objective
value of any feasible vector $\lambda$ is higher or equal. Thus,
this is the solution (unique up to a permutation). Hence we have $\text{ld}\left(L_{H}^{+}L_{G}\right)\geq\sum_{i}\log\lambda_{i}^{*}=\log\left(\sigma-n+2\right)$.

Finally, note that if $H=G$, then $\text{st}_{H}\left(G\right)=n-1$,
which gives an equality. 
\end{proof}
Note that Lemma \ref{lem:stretch-inequality} gives us $L_{H}\preceq L_{G}\preceq\text{st}_{H}\left(G\right)L_{H}$
which implies $\text{ld}\left(L_{H}\right)\leq\text{ld}\left(L_{G}\right)\leq\text{ld}\left(L_{H}\right)+n\log\text{st}_{H}\left(G\right)$.
The inequalities in Theorem \ref{thm:stretch-pld-bounds} are stronger.
Interestingly, it does not make assumption on the topology of the
graphs (such as $L_{H}$ being a subset of $L_{G}$). Research on
conditioners has focused so far on low-stretch approximations that
are subgraphs of the original graph. It remains to be seen if some
better preconditioners can be found with stretches in $\mathcal{O}\left(n\right)$
by considering more general graphs. In this case, the machinery developed
in Section 3 would not be necessary.

From a practical perspective, the stretch can be calculated also in near-linear time with
respect to the number of non-zero entries. 
\begin{lemma}
\label{lem:stretch-approx}Let $G=\left(\mathcal{V},\,\mathcal{E}_{G}\right),\, H=\left(\mathcal{V},\,\mathcal{E}_{H}\right)$
be connected graphs over the same set of vertices, and $L_{G}$, $L_{H}$
their respective Laplacians. Call $r=\max_{e}L_{H}\left(e\right)/\min_{e}L_{H}\left(e\right)$.
Given $\epsilon>0$, there exists an algorithm that returns a scalar
$y$ so that: 
\[
\left(1-\epsilon\right)\text{st}_{H}\left(G\right)\leq y\leq\left(1+\epsilon\right)\text{st}_{H}\left(G\right)
\]
with high probability and in expected time $\tilde{\mathcal{O}}\left(m\epsilon^{-2}\log\left(rn\right)\right)$.\end{lemma}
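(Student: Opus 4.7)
The plan is to adapt the Spielman--Srivastava algorithm for approximating effective resistances to this weighted-sum setting. Write $L_H = B_H^T W_H B_H$, where $B_H \in \mathbb{R}^{m_H \times n}$ is the signed edge--vertex incidence matrix of $H$ and $W_H$ is the diagonal matrix of edge weights. Because $L_H^+ L_H L_H^+ = L_H^+$, the effective resistance between $u$ and $v$ in $H$ can be written as
$$\left(\mathcal{X}_u - \mathcal{X}_v\right)^T L_H^+ \left(\mathcal{X}_u - \mathcal{X}_v\right) = \bigl\|\Pi (\mathcal{X}_u - \mathcal{X}_v)\bigr\|_2^2, \qquad \Pi := W_H^{1/2} B_H L_H^+ \in \mathbb{R}^{m_H \times n}.$$
By the definition of generalized stretch (or equivalently Lemma~\ref{lem:stretch-trace}), we then have $\text{st}_H(G) = \sum_{(u,v)\in\mathcal{E}_G} L_G(u,v) \,\bigl\|\Pi(\mathcal{X}_u - \mathcal{X}_v)\bigr\|_2^2$, a nonnegative weighted sum of squared norms of $n$ fixed vectors.

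Next, I would apply a Johnson--Lindenstrauss projection: let $Q \in \mathbb{R}^{k \times m_H}$ be a random $\pm 1/\sqrt{k}$ sign matrix with $k = O(\epsilon^{-2} \log n)$. Standard JL guarantees that, with probability at least $1 - n^{-c}$, simultaneously for all $n$ vectors $\Pi \mathcal{X}_u$,
$$(1 - \epsilon)\bigl\|\Pi(\mathcal{X}_u - \mathcal{X}_v)\bigr\|_2^2 \leq \bigl\|Q \Pi(\mathcal{X}_u - \mathcal{X}_v)\bigr\|_2^2 \leq (1 + \epsilon)\bigl\|\Pi(\mathcal{X}_u - \mathcal{X}_v)\bigr\|_2^2$$
for every pair $(u,v)$, and summing over edges of $G$ with nonnegative weights $L_G(u,v)$ preserves the relative error. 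To compute $Z := Q\Pi$ in near-linear time, first form $M = Q W_H^{1/2} B_H \in \mathbb{R}^{k \times n}$ in $O(k\, m_H)$ time, then solve the $k$ SDD systems $L_H z_i = m_i^T$ using the near-linear solver of Lemma~\ref{lem:linear-precond-existence}; this produces rows $z_i^T$ of $Z$. Finally, for each edge $(u,v) \in \mathcal{E}_G$ compute $\|Z_u - Z_v\|_2^2$ in $O(k)$ time and sum with weights $L_G(u,v)$.

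The main obstacle is making the approximate solves compatible with the multiplicative stretch guarantee. Lemma~\ref{lem:linear-precond-existence} returns $\tilde{z}_i$ with $\|\tilde{z}_i - L_H^+ m_i\|_{L_H} \leq \nu \|L_H^+ m_i\|_{L_H}$, and we must propagate this into a $(1 \pm \epsilon)$ bound on each individual $\|\Pi(\mathcal{X}_u - \mathcal{X}_v)\|_2^2$. Individual effective resistances lie in the range $[1/(nw_{\max}), \, n/w_{\min}]$, so their ratio is bounded by $O(rn^2)$; requiring $\nu = \mathrm{poly}(\epsilon/(rn))$ is therefore sufficient to translate the $L_H$-norm error into a multiplicative error bounded by $\epsilon/2$ on every $\|Z(\mathcal{X}_u - \mathcal{X}_v)\|_2^2$ (shown by a Cauchy--Schwarz / triangle-inequality calculation comparing $Z$ to the exact $Q\Pi$). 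This choice of $\nu$ only inflates each solve by a $\log(1/\nu) = O(\log(rn/\epsilon))$ factor.

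Combining the costs, forming $M$ takes $O(k m_H)$ time, the $k$ SDD solves take $\tilde{O}(k\, m_H \log(rn))$ time, and the edge-by-edge summation over $G$ takes $O(k m_G)$. With $k = O(\epsilon^{-2}\log n)$ and $m = m_G + m_H$, the total expected runtime is $\tilde{O}(m \epsilon^{-2}\log(rn))$. The success probability of the JL step can be boosted to any desired constant by a union bound, yielding the claimed approximation with high probability.
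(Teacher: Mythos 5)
Your proof is correct and follows essentially the same route as the paper: the paper simply invokes Theorem 2 of Spielman--Srivastava (the effective-resistance sketch) as a black box and sums the per-edge resistances weighted by the conductances of $G$, which is exactly the construction you re-derive (JL projection of $W_H^{1/2}B_H L_H^+$ computed via $O(\epsilon^{-2}\log n)$ approximate SDD solves). Your additional care about propagating the solver error with $\nu=\mathrm{poly}(\epsilon/(rn))$ is the same accounting that underlies the cited theorem, so there is no substantive difference in approach.
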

\begin{proof}
This is a straightforward consequence of Theorem $2$ in \cite{Spielman2009}.
Once the effective resistance of an edge can be approximated in time
$\mathcal{O}\left(\log n/\epsilon^{2}\right)$, we can sum it and
weight it by the conductance in $G$ for each edge. 
\end{proof}

\subsection{Fast inexact estimates}

The bound presented in Equation \ref{eq:encadrement-1} has some interesting
consequences if one is interested only in a rough estimate of the
log-determinant: if $\epsilon=\mathcal{O}\left(1\right)$, it is possible
to approximate the log-determinant in expected time $\tilde{O}\left(m+n\log^{3}n\right)$.
We will make use of this sparsification result from Spielman and Srivastava
\cite{Spielman2009}: 
\begin{lemma}
\label{lem:sriva-sparsification}(Theorem 12 in \cite{Spielman2009}).
Given a Laplacian $L_{G}$ with $m$ edges, there is an expected $\tilde{O}\left(m/\epsilon^{2}\right)$
algorithm that produces a graph $L_{H}$ with $\mathcal{O}\left(n\log n/\epsilon^{2}\right)$
edges that satisfies $\left(1-\epsilon\right)L_{G}\preceq L_{H}\preceq\left(1+\epsilon\right)L_{G}$. 
\end{lemma}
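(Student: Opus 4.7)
The plan is to realize $L_G$ as a sum of rank-one edge contributions and then build $L_H$ by random sampling with probabilities proportional to effective resistances. Writing $b_e = \mathcal{X}_u - \mathcal{X}_v$ for each edge $e = (u,v)$, we have $L_G = \sum_{e \in \mathcal{E}_G} w_e\, b_e b_e^T$. Conjugating by $L_G^{+/2}$ turns this into $\Pi = \sum_e (w_e^{1/2} L_G^{+/2} b_e)(w_e^{1/2} L_G^{+/2} b_e)^T$, which equals the orthogonal projector onto the range of $L_G$. The squared norm of the $e$-th vector is precisely $w_e \cdot (b_e^T L_G^+ b_e) = w_e R_e$, the leverage score of edge $e$, and by Lemma \ref{lem:stretch-trace} these scores sum to $\mathrm{Tr}(\Pi) = n-1$. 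Hence they form (after normalization) a probability distribution over edges that concentrates mass on spectrally important edges.

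Next, I would sample $q = \Theta(n \log n / \epsilon^2)$ edges i.i.d.\ from this distribution and, for each sampled edge $e$, include a copy in $H$ reweighted by $w_e / (q \cdot p_e)$ so that $\mathbb{E}[L_H] = L_G$. Each draw contributes a rank-one matrix $Y_i$ with $\mathbb{E}[Y_i] = \Pi/q$ and with operator norm controlled uniformly by the sampling weights. A matrix concentration inequality of Rudelson/Ahlswede--Winter type (as used by Spielman and Srivastava) then yields $\|\sum_i Y_i - \Pi\| \le \epsilon$ with high probability, which by sandwiching is equivalent to $(1-\epsilon) L_G \preceq L_H \preceq (1+\epsilon) L_G$ on the range of $L_G$. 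Choosing the constants so the failure probability is $o(1)$ requires exactly the $\log n$ factor appearing in the edge count.

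The remaining obstacle, and the genuinely non-trivial part of the theorem, is to extract the sampling distribution in near-linear time: computing effective resistances exactly would require inverting $L_G$. The Spielman--Srivastava trick is a Johnson--Lindenstrauss projection: note $R_e = \|L_G^{+/2} b_e\|^2 = \|W^{1/2} B L_G^+ b_e\|^2$, where $B$ is the signed edge-vertex incidence matrix and $W$ the diagonal weight matrix, so $R_e$ is the squared distance between two rows of the $m \times n$ matrix $Z := W^{1/2} B L_G^+$. Pre-multiplying $Z$ by a random $k \times m$ JL matrix with $k = O(\log n / \epsilon^2)$ preserves all pairwise distances up to $1 \pm \epsilon$, and each of the $k$ rows of the projected matrix can be obtained by solving $k$ Laplacian systems in $L_G$ against explicit right-hand sides. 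Using a near-linear SDD solver (Lemma \ref{lem:linear-precond-existence}) to do these $k$ solves to sufficient precision gives all $m$ effective resistances to relative accuracy $\epsilon$ in total expected time $\tilde{O}(m / \epsilon^2)$, and one final pass over the edges performs the sampling.

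The main obstacle is coupling the two approximations cleanly: the sampling analysis assumes we use the true $R_e$'s, but only $1 \pm \epsilon$ approximations are available, and one must argue that the induced distortion in both the sampling probabilities and the reweighting scales only affects the spectral bound by constants (after adjusting $\epsilon$ by a constant factor). Once this is checked, the edge count $O(n \log n / \epsilon^2)$ and runtime $\tilde{O}(m/\epsilon^2)$ follow immediately from the sample size and the JL-plus-solver construction, completing the proof.
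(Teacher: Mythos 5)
This lemma is not proved in the paper at all: it is imported verbatim as Theorem 12 of Spielman and Srivastava \cite{Spielman2009}, so there is no internal argument to compare against. Your sketch is a faithful reconstruction of the original Spielman--Srivastava proof, and its main steps are correct: writing $L_G=\sum_e w_e b_e b_e^T$, conjugating to the projector $\Pi$ so that the leverage scores $w_e R_e$ sum to $n-1$ (the same fact the paper records as $\text{st}_G(G)=n-1$), sampling $\Theta(n\log n/\epsilon^2)$ edges i.i.d.\ with probabilities proportional to $w_e R_e$ and reweighting so the estimator is unbiased, invoking a Rudelson/Ahlswede--Winter-type matrix concentration bound to get $\bigl\Vert \sum_i Y_i-\Pi\bigr\Vert\leq\epsilon$, and recovering the two-sided Loewner bound on the range of $L_G$. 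The algorithmic half --- expressing $R_e$ as squared row distances of $W^{1/2}BL_G^{+}$, compressing with a Johnson--Lindenstrauss projection of dimension $O(\log n/\epsilon^2)$, and realizing each projected row via a near-linear SDD solve --- is also the right mechanism for the $\tilde{O}(m/\epsilon^2)$ running time. The two points you flag as remaining work are indeed the only delicate ones, and both are handled in the original paper by constant-factor slack: constant-factor approximations of the $R_e$'s (and of the solver outputs) only rescale the sampling probabilities and the variance bound by constants, which is absorbed by adjusting $\epsilon$ and the sample-size constant. So your proposal is correct as a proof sketch of the cited result, going beyond what the paper itself supplies.
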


An immediate consequence is that given any graph, we can find a graph
with a near-optimal stretch (up to an $\epsilon$ factor) and $\mathcal{O}\left(n\log n/\epsilon^{2}\right)$
edges. 
\begin{lemma}
\label{lem:low-stretch-bounding}Given a Laplacian $L_{G}$ with $m$
edges, there is an expected $\tilde{O}\left(m/\epsilon^{2}\right)$
algorithm that produces a graph $L_{H}$ with $\mathcal{O}\left(n\log n/\epsilon^{2}\right)$
edges that satisfies $\left(n-1\right)\leq\text{st}_{H}\left(G\right)\preceq\frac{1+\epsilon}{1-\epsilon}\left(n-1\right)$.\end{lemma}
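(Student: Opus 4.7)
The plan is to reduce the claim to the Spielman--Srivastava sparsification result (Lemma \ref{lem:sriva-sparsification}) together with a single rescaling trick. First I would run that sparsification algorithm on $L_G$ with precision parameter $\epsilon$ to produce, in expected time $\tilde{O}(m/\epsilon^2)$, a Laplacian $L_{\tilde{H}}$ with $\mathcal{O}(n\log n/\epsilon^2)$ edges satisfying $(1-\epsilon)L_G \preceq L_{\tilde{H}} \preceq (1+\epsilon)L_G$. Since the lower bound $\text{st}_H(G) \geq n-1$ quoted after Definition \ref{Generalized-stretch} requires $L_H \preceq L_G$, I would then define $L_H := \frac{1}{1+\epsilon} L_{\tilde{H}}$. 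Rescaling does not alter the number of edges, and a direct computation gives
\[
\frac{1-\epsilon}{1+\epsilon}\, L_G \;\preceq\; L_H \;\preceq\; L_G.
\]

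With $L_H \preceq L_G$ in hand, the lower bound $\text{st}_H(G) \geq n-1$ follows by writing $\text{st}_H(G)=\text{Tr}(L_H^+ L_G)$ (Lemma \ref{lem:stretch-trace}) and using the standard fact that for PSD matrices $A \preceq B$ and PSD $C$, $\text{Tr}(CA) \leq \text{Tr}(CB)$: taking $C = L_H^+$ and $A = L_H, B = L_G$ yields $\text{Tr}(L_H^+ L_G) \geq \text{Tr}(L_H^+ L_H) = n-1$, where the final equality uses that $L_H^+ L_H$ is the orthogonal projector onto the range of $L_H$, which coincides with the space orthogonal to $\mathbf{1}$ (assuming $G$, hence $H$, connected). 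For the upper bound I would work on this $(n-1)$-dimensional subspace, where both operators are invertible, and use the order-reversing property of the inverse: $L_H \succeq \tfrac{1-\epsilon}{1+\epsilon} L_G$ implies $L_H^+ \preceq \tfrac{1+\epsilon}{1-\epsilon} L_G^+$. Then
\[
\text{st}_H(G) \;=\; \text{Tr}\bigl(L_G^{1/2} L_H^+ L_G^{1/2}\bigr) \;\leq\; \frac{1+\epsilon}{1-\epsilon}\,\text{Tr}\bigl(L_G^{1/2} L_G^+ L_G^{1/2}\bigr) \;=\; \frac{1+\epsilon}{1-\epsilon}(n-1),
\]
where $L_G^{1/2}$ denotes the PSD square root restricted to the range of $L_G$.

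The main obstacle, to the extent there is one, is the bookkeeping around the pseudo-inverses, since $L_G$ and $L_H$ are both singular with nullspace $\text{span}(\mathbf{1})$; everything has to be done consistently on the common range. Once that is set up, the running time and edge-count bounds are inherited directly from Lemma \ref{lem:sriva-sparsification} at no extra asymptotic cost, since the rescaling and trace-inequality manipulations are purely algebraic.
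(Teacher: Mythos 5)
Your proof is correct and follows essentially the same route as the paper: sparsify $L_G$ via Lemma \ref{lem:sriva-sparsification}, rescale by $(1+\epsilon)^{-1}$ so that $L_H\preceq L_G$, and then sandwich $\text{st}_H(G)$ using $\text{st}_G(G)=n-1$ together with monotonicity of the stretch under the Loewner order. The only difference is cosmetic: where the paper simply invokes the previously stated scaling and monotonicity properties of the generalized stretch, you re-derive them inline via the trace identity of Lemma \ref{lem:stretch-trace} and anti-monotonicity of the (pseudo-)inverse on the common range.
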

\begin{proof}
Consider a graph $H$ produced by Lemma \ref{lem:sriva-sparsification},
which verifies $\left(1-\epsilon\right)L_{G}\preceq L_{H}\preceq\left(1+\epsilon\right)L_{G}$.
Using the stretch over this matrix inequality, this implies: 
\[
\text{st}_{\left(1+\epsilon\right)G}\left(G\right)\leq\text{st}_{H}\left(G\right)\leq\text{st}_{\left(1-\epsilon\right)G}\left(G\right)
\]
which is equivalent to: 
\[
\left(1+\epsilon\right)^{-1}\text{st}_{G}\left(G\right)\leq\text{st}_{H}\left(G\right)\leq\left(1-\epsilon\right)^{-1}\text{st}_{G}\left(G\right)
\]
and the stretch of a connected graph with respect to itself is $n-1$.
By rescaling $H$ to $\left(1+\epsilon\right)^{-1}H$, we get: 
\[
n-1\leq\text{st}_{H}\left(G\right)\leq\frac{1+\epsilon}{1-\epsilon}\left(n-1\right)
\]

\end{proof}

Here is the main result of this section: 
\begin{proposition}
There exists an algorithm that on input $A\in SDD_{n}$, returns an
approximation $n^{-1}\log\left|A\right|$ with precision $1/2$ in
expected time \linebreak $\tilde{O}\left(m+n\log^{3}n\log^{2}\kappa(A)\right)$
with $\kappa(A)$ the condition number of $A$.\end{proposition}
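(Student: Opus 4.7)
The plan is to combine the Spielman--Srivastava spectral sparsifier (Lemma~\ref{lem:sriva-sparsification}) with a single coarse-precision call to \texttt{UltraLogDet} (Theorem~\ref{thm:ultra_main}). First, I would apply the Kelner reduction from Section~\ref{sec:Making-the-problem} to rewrite $\log|A| = \ld(\tilde{A}) - \ld(\hat{A})$, where $\tilde{A}$ and $\hat{A}$ are Laplacians on $O(n)$ vertices and $O(m)$ edges. It then suffices to approximate $n^{-1}\ld(L)$ for each such Laplacian up to a small constant, since the two errors combine linearly.

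Next, for each such $L$, invoke Lemma~\ref{lem:sriva-sparsification} with a fixed small constant $\epsilon_0$ (say $\epsilon_0 = 1/16$), producing in expected time $\tilde{O}(m)$ a sparser Laplacian $L_H$ with $m' = O(n\log n)$ edges satisfying $(1-\epsilon_0)L \preceq L_H \preceq (1+\epsilon_0)L$. The generalized eigenvalues of $L_H$ relative to $L$ then lie in $[1-\epsilon_0,\, 1+\epsilon_0]$, so summing their logarithms yields
\[
|\ld(L) - \ld(L_H)| \le (n-1)\log\frac{1}{1-\epsilon_0},
\]
and the resulting deterministic error on $n^{-1}\ld(L)$ is at most $\log(1/(1-\epsilon_0))$, which can be made smaller than any chosen threshold by picking $\epsilon_0$ appropriately.

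Finally, call \texttt{UltraLogDet} on $L_H$ with a constant target precision $\epsilon_2$ and constant failure probability. Here $m' = O(n\log n)$, $\epsilon_2 = \Omega(n^{-1})$, and $\kappa(L_H) = O(\kappa(A))$ because the sparsifier and the Kelner reduction each distort the spectrum only by constant factors. Theorem~\ref{thm:ultra_main}, in its improved form for $\epsilon_2 = \Omega(n^{-1})$, then gives expected running time $\tilde{O}(m'\log^3 n\log^2\kappa(A)) = \tilde{O}(n\log^3 n\log^2\kappa(A))$ for this step, and combined with the $\tilde{O}(m)$ sparsification cost (applied to the two Kelner pieces) this yields the claimed total. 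The main obstacle is the bookkeeping of constants: the sparsification errors coming from both pieces of the Kelner reduction together with the two Monte Carlo sampling errors must sum to at most $1/2$, which pins down $\epsilon_0$ and $\epsilon_2$ to specific small constants before the cost estimate above applies cleanly; verifying that $\kappa(L_H)$ inherits from $\kappa(A)$ only through a constant factor under the Kelner construction is a secondary but routine check.
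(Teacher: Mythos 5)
Your proposal is correct and follows the same skeleton as the paper's proof: sparsify to $O(n\log n)$ edges at a fixed constant $\epsilon_0$ via Lemma \ref{lem:sriva-sparsification}, pay an $\tilde{O}(m)$ one-time cost, argue the deterministic error on $n^{-1}\ld$ is a small constant, and then run \texttt{UltraLogDet} (Theorem \ref{thm:ultra_main}) at constant precision on the sparsifier, whose condition number differs from $\kappa(A)$ only by a constant factor. The one genuine difference is how the deterministic sparsification error is controlled: the paper rescales so that $L_H\preceq L_G$ with $\mathrm{st}_H(G)\leq(1+1/8)(n-1)$ (Lemma \ref{lem:low-stretch-bounding}) and invokes the stretch bound of Theorem \ref{thm:stretch-pld-bounds} to get $\ld(H)\leq\ld(G)\leq\ld(H)+(n-1)/4$, whereas you bound $\left|\ld(L)-\ld(L_H)\right|\leq(n-1)\log\bigl(1/(1-\epsilon_0)\bigr)$ directly from the two-sided inequality $(1-\epsilon_0)L\preceq L_H\preceq(1+\epsilon_0)L$. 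Your route is more elementary and bypasses the generalized-stretch machinery entirely; the paper's route is essentially an advertisement for the stretch bounds it has just developed and yields an explicit one-sided bracket. You are also more explicit than the paper about reducing a general SDD matrix to Laplacians via the Kelner decomposition, which is a fair point since the paper silently works with $L_A$; be warned, however, that your closing claim that $\kappa$ passes through the Kelner reduction with only a constant factor is not routine and is false in general: for $A=I+\delta M$ with $M$ an all-positive off-diagonal pattern, $\kappa(A)\to1$ while the auxiliary Laplacian $\hat{A}$ has positive eigenvalues of order $\delta$, so $\kappa(\tilde{A})$ can blow up independently of $\kappa(A)$. Since $\kappa$ enters the complexity only through $\log^2\kappa$, this does not derail the algorithm, but it does mean the stated $\log^2\kappa(A)$ factor needs either a restriction (no positive off-diagonals, as the paper implicitly assumes) or a separate argument; this gap is shared with, not created by, the paper.
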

\begin{proof}
Given $L_{A}$, compute $H$ from Lemma \ref{lem:low-stretch-bounding}
using $\epsilon=1/16$ so that $\left(n-1\right)\leq\text{st}_{H}\left(G\right)\preceq\left(1+1/8\right)\left(n-1\right)$.
Then, using Theorem \ref{thm:stretch-pld-bounds}, this leads to the
bound: 
\[
\text{\text{ld}}\left(H\right)\leq\log\left|A\right|\leq\text{ld}\left(H\right)+\frac{n-1}{4}
\]
since $H$ has $\mathcal{O}\left(n\log n\right)$ edges by construction,
we can use Theorem \ref{thm:ultra_main} to compute a $1/4$- approximation
of $\text{ld}\left(H\right)$ in expected time $\tilde{O}\left(n\log^{3}n\log^{2}\left(\kappa(H)\right)\right)$.
By construction $\kappa(H)\leq\frac{1+1/16}{1-1/16}\kappa(A)$, hence
the result. 
\end{proof}

It would be interesting to see if this technique could be developed
to handle arbitrary precision as well.

\section*{Comments}

Since the bulk of the computations are performed in estimating the
residue PLD, it would be interesting to see if this could be bypassed
using better bounds based on the stretch.

Also, even if this algorithm presents a linear bound, it requires
a fairly advanced machinery (ST solvers) that may limit its practicality.
Some heuristic implementation, for example based on algebraic multi-grid
methods, could be a first step in this direction.

The authors are much indebted to Satish Rao and James Demmel for suggesting
the original idea, and to Benjamin Recht for helpful comments on the
draft of this article.

\section*{Appendix A: Proofs of Section 2}

\subsection{Proof of Theorem \ref{thm:det-sampling-theorem}}

\label{sub:det-sampling-proof} 
\begin{proof}
The proof of this theorem follows the proof of the Main Theorem in
\cite{Barry1999} with some slight modifications. Using triangular
inequality: 
\[
\left|y-\hat{y}_{p,l}\right|\leq\left|\mathbb{E}\left[\hat{y}_{p,l}\right]-\hat{y}_{p,l}\right|+\left|y-\mathbb{E}\left[\hat{y}_{p,l}\right]\right|
\]

Since $S$ is upper-bounded by $\left(1-\delta\right)I$, we have
for all $k\in\mathbb{N}$: 
\[
\left|\mbox{Tr}\left(S^{k}\right)\right|\leq n\left(1-\delta\right)^{k}
\]
We have $\mathbb{E}\left[\hat{y}_{p,l}\right]=-\sum_{i=1}^{l}i^{-1}S^{i}$
and $y=-\sum_{i=1}^{\infty}i^{-1}S^{i}$. Using again triangle inequality,
we can bound the error with respect to the expected value: 
\begin{eqnarray*}
\left|y-\mathbb{E}\left[\hat{y}_{p,l}\right]\right| & = & n^{-1}\left|\sum_{i=l+1}^{\infty}\frac{1}{i}\mbox{Tr}\left(S^{k}\right)\right|\\
 & \leq & n^{-1}\sum_{i=l+1}^{\infty}\frac{1}{i}\left|\mbox{Tr}\left(S^{k}\right)\right|\\
 & \leq & \frac{1}{n\left(l+1\right)}\sum_{i=l+1}^{\infty}\left|\mbox{Tr}\left(S^{k}\right)\right|\\
 & \leq & \frac{1}{l+1}\sum_{i=l+1}^{\infty}\left(1-\delta\right)^{k}\\
 & \leq & \frac{1}{l+1}\frac{\left(1-\delta\right)^{l+1}}{\delta}\\
 & \leq & \frac{\left(1-\delta\right)^{l+1}}{\delta}
\end{eqnarray*}

And since $\delta\le-\log\left(1-\delta\right)$, for a choice of
$l\geq\delta^{-1}\log\left(\frac{2}{\epsilon\delta}\right)$, the
latter part is less than $\epsilon/2$. We now bound the first part
using Lemma \ref{lem:bernstein-trace}. Call $H$ the truncated series:
\[
H=-\sum_{i=1}^{m}\frac{1}{i}S^{i}
\]
This truncated series is upper-bounded by $0$ ($H$ is negative,
semi-definite). The lowest eigenvalue of the truncated series can
be lower-bounded in terms of $\delta$: 
\[
H=-\sum_{i=1}^{m}\frac{1}{i}S^{i}\succeq-\sum_{i=1}^{m}\frac{1}{i}\left(1-\delta\right)^{i}I\succeq-\sum_{i=1}^{+\infty}\frac{1}{i}\left(1-\delta\right)^{i}I=\left(\log\delta\right)I
\]
We can now invoke Lemma \ref{lem:bernstein-trace} to conclude: 
\[
\mathbb{P}\left[\left|\frac{1}{p}\sum_{i=1}^{p}\left(\mathbf{u}_{i}^{T}\mathbf{u}_{i}\right)^{-1}\mathbf{u}_{i}^{T}H\mathbf{u}_{i}-n^{-1}\mbox{Tr}\left(H\right)\right|\geq\frac{\epsilon}{2}\right]\leq2\exp\left(-\frac{p\epsilon^{2}}{16\frac{\left(\log\left(1/\delta\right)\right)^{2}}{n}+4\frac{\log\left(1/\delta\right)\epsilon}{3}}\right)
\]
Thus, any choice of 
\[
p\geq16\left(\frac{1}{\epsilon}+\frac{1}{n\epsilon^{2}}\right)\log\left(2/\eta\right)\log^{2}\left(\delta^{-1}\right)\geq\log\left(2/\eta\right)\epsilon^{-2}\left(16\frac{\left(\log\left(1/\delta\right)\right)^{2}}{n}+\frac{4}{3}\epsilon\log\left(\delta^{-1}\right)\right)
\]
satisfies the inequality: $2\exp\left(-\frac{p\epsilon^{2}}{16n^{-1}\left(\log\left(1/\delta\right)\right)^{2}+4\log\left(1/\delta\right)\epsilon/3}\right)\leq\eta$. 
\end{proof}

\subsection{Proof of Corollary \ref{cor:preconditioning}}
\begin{proof}
We introduce some notations that will prove useful for the rest of
the article: 
\[
H=I-B^{-1}A
\]
\[
S=I-B^{-1/2}AB^{-1/2}
\]
with $B^{-1/2}$ the inverse of the square root%
\footnote{Given a real PSD matrix $X$, which can be diagonalized: $X=Q\Delta Q^{T}$
with $\Delta$ diagonal, and $\Delta_{ii}\geq0$. Call $Y=Q\sqrt{\Delta}Q^{T}$
the square root of $X$, then $Y^{2}=X$.%
} of the positive-definite matrix $B$. The inequality (\ref{eq:A-B-bounds})
is equivalent to $\kappa^{-1}B\preceq A\preceq B$, or also: 
\[
\left(1-\kappa^{-1}\right)I\succeq I-B^{-1/2}AB^{-1/2}\succeq0
\]
\begin{equation}
\left(1-\kappa^{-1}\right)I\succeq S\succeq0\label{eq:s-encadrement}
\end{equation}

The matrix $S$ is a contraction, and its spectral radius is determined
by $\kappa$. Furthermore, computing the determinant of $B^{-1}A$
is equivalent to computing the determinant of $I-S$: 
\begin{eqnarray*}
\log\left|I-S\right| & = & \log\left|B^{-1/2}AB^{-1/2}\right|\\
 & = & \log\left|A\right|-\log\left|B\right|\\
 & = & \log\left|B^{-1}A\right|\\
 & = & \log\left|I-H\right|
\end{eqnarray*}

and invoking Theorem \ref{thm:det-sampling-theorem} gives us bounds
on the number of calls to matrix-vector multiplies with respect to
$S$. It would seem at this point that computing the inverse square
root of $B$ is required, undermining our effort. However, we can
reorganize the terms in the series expansion to yield only full inverses
of $B$. Indeed, given $l\in\mathbb{N}^{*}$, consider the truncated
series: 
\begin{eqnarray*}
y_{l} & = & -\mbox{Tr}\left(\sum_{i=1}^{l}\frac{1}{i}S^{i}\right)\\
 & = & -\sum_{i=1}^{l}\frac{1}{i}\mbox{Tr}\left(S^{i}\right)\\
 & = & -\sum_{i=1}^{l}\frac{1}{i}\mbox{Tr}\left(\sum_{j}\left(\begin{array}{c}
j\\
i-j
\end{array}\right)\left(-1\right)^{j}\left(B^{-1/2}AB^{-1/2}\right)^{j}\right)\\
 & = & -\sum_{i=1}^{l}\frac{1}{i}\sum_{j}\left(\begin{array}{c}
j\\
i-j
\end{array}\right)\left(-1\right)^{j}\mbox{Tr}\left(\left(B^{-1/2}AB^{-1/2}\right)^{j}\right)\\
 & = & -\sum_{i=1}^{l}\frac{1}{i}\sum_{j}\left(\begin{array}{c}
j\\
i-j
\end{array}\right)\left(-1\right)^{j}\mbox{Tr}\left(\left(B^{-1}A\right)^{j}\right)\\
 & = & -\sum_{i=1}^{l}\frac{1}{i}\mbox{Tr}\left(\sum_{j}\left(\begin{array}{c}
j\\
i-j
\end{array}\right)\left(-1\right)^{j}\left(B^{-1}A\right)^{j}\right)\\
 & = & -\sum_{i=1}^{l}\frac{1}{i}\mbox{Tr}\left(H^{i}\right)
\end{eqnarray*}
Hence, the practical computation of the latter sum can be done on
$A^{-1}B$. To conclude, if we compute $p=16\left(\frac{1}{\epsilon}+\frac{1}{n\epsilon^{2}}\right)\log\left(2/\eta\right)\log^{2}\left(\kappa\right)$
truncated chains of length $l=\kappa\log\left(\frac{2\kappa}{\epsilon}\right)$,
we get our result. This requires $lp$ multiplications by $A$ and
inversions by $B$. 
\end{proof}

\subsection{Proof of Theorem \ref{thm:preconditioning-approx}}

We prove here the main result of Section \ref{sec:Preconditioned-log-determinants}.
In the following, $A$ and $B$ are positive-definite matrices in
$\mathcal{S}_{n}$, and $B$ is a $\kappa-$approximation of $A$
($A\preceq B\preceq\kappa A$). The following notations will prove
useful:

\begin{equation}
S=I-B^{-1/2}AB^{-1/2}\label{eq:S-def}
\end{equation}

\begin{equation}
R=I-B^{-1}A\label{eq:R-def}
\end{equation}

\[
\varphi=\kappa^{-1}
\]

Recall the definition of the matrix norm. Given $M\in\mathcal{S}_{n}^{+}$,
$\left\Vert M\right\Vert _{B}=\max_{x\neq0}\sqrt{\frac{x^{T}Mx}{x^{T}Bx}}$ 
\begin{lemma}
\label{lem:S-R-contractions} $S$ and $R$ are contractions for the
Euclidian and $B-$norms: 
\begin{eqnarray*}
\left\Vert S\right\Vert  & \leq & 1-\varphi\\
\left\Vert R\right\Vert  & \leq & 1-\varphi\\
\left\Vert R\right\Vert _{B} & \leq & \left(1-\varphi\right)^{2}
\end{eqnarray*}
\end{lemma}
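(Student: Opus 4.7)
The plan is to reduce all three inequalities to a single spectral bound on the symmetric matrix $S$, then transfer that bound to the non-symmetric $R$ via the similarity $R = B^{-1/2} S B^{1/2}$. First, conjugating $A \preceq B \preceq \kappa A$ by $B^{-1/2}$ yields $\kappa^{-1} I \preceq B^{-1/2} A B^{-1/2} \preceq I$, equivalently $0 \preceq S \preceq (1 - \varphi) I$. Since $S$ is symmetric positive semi-definite, this is exactly $\|S\| \leq 1 - \varphi$. Next, writing $R = I - B^{-1} A = B^{-1/2}(I - B^{-1/2} A B^{-1/2}) B^{1/2} = B^{-1/2} S B^{1/2}$ exhibits $R$ as similar to $S$, so $R$ shares its eigenvalues, all of which lie in $[0, 1 - \varphi]$. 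Moreover $B R = B - A = (B - A)^T = R^T B$, so $R$ is self-adjoint for the $B$-inner product; it is therefore diagonalizable in a $B$-orthonormal basis, and its norm in the appropriate metric coincides with its spectral radius, giving $\|R\| \leq 1 - \varphi$.

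For the third bound the computation is direct. Using $R^T B = B - A$ and then multiplying by $R$,
\[
R^T B R = (B - A) R = (B - A) - A (I - B^{-1} A) = B - 2A + A B^{-1} A.
\]
Sandwiching by $B^{-1/2}$ and substituting the identity $B^{-1/2} A B^{-1/2} = I - S$ reveals a perfect square:
\[
B^{-1/2} (R^T B R) B^{-1/2} = I - 2(I - S) + (I - S)^2 = S^2.
\]
Equivalently, $R^T B R = B^{1/2} S^2 B^{1/2}$. Setting $z = B^{1/2} x$ so that $\|x\|_B = \|z\|_2$, one obtains
\[
\frac{x^T R^T B R x}{x^T B x} = \frac{z^T S^2 z}{z^T z} \leq \|S\|^2 \leq (1 - \varphi)^2,
\]
which is exactly $\|R\|_B \leq (1-\varphi)^2$ under the natural extension of the paper's PSD-only definition $\|M\|_B = \sqrt{\max_x x^T M x / x^T B x}$ to the non-symmetric $R$ by the squared $B$-Rayleigh quotient $\|R\|_B := \sup_x x^T R^T B R x / x^T B x$.

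The main obstacle is the non-symmetry of $R$: one cannot simply read Euclidean-norm bounds off its eigenvalues, so the whole argument hinges on exploiting the hidden symmetry captured by the twin identities $R = B^{-1/2} S B^{1/2}$ and $B R = R^T B$. A secondary subtlety is that $\|R\|_B$ is not literally covered by the paper's earlier definition, so one has to settle on a convention; the calculation above produces the factor $(1-\varphi)^2$ precisely as the squared $B$-Rayleigh quotient of $R$, so the lemma's statement must be read in that sense, with the extra factor of $(1-\varphi)$ relative to the bounds on $\|S\|$ and $\|R\|$ coming from the $B$-weighting on both sides of $R$ in $R^T B R$.
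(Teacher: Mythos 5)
Your proposal follows essentially the same route as the paper's: reduce everything to the spectral bound $0\preceq S\preceq(1-\varphi)I$, and obtain the $B$-norm bound from the identity $B^{-1/2}R^{T}BRB^{-1/2}=S^{2}$ (you reach it via $R^{T}B=B-A$, the paper by conjugating $R^{T}BR$ directly; it is the same identity). Two points deserve comment. On the second line, your observation that $BR=R^{T}B$ makes $R$ self-adjoint for the $B$-inner product shows that the \emph{$B$-induced} operator norm of $R$ equals its spectral radius, hence is at most $1-\varphi$; it does not bound the Euclidean operator norm $\left\Vert R\right\Vert=\left\Vert B^{-1/2}SB^{1/2}\right\Vert$, which for a non-normal matrix can exceed the spectral radius (and can exceed $1-\varphi$ when $B$ is ill-conditioned, e.g. $B=\mathrm{diag}(1,100)$ with $S$ having a large off-diagonal entry). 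You flag exactly this obstacle in your closing paragraph and then step over it anyway. To be fair, the paper's own proof of that line has the identical gap --- it only bounds the eigenvalues of $B^{-1}A$ --- and the Euclidean bound is never used downstream, but as written neither argument establishes it.

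Your second remark, on the convention for $\left\Vert R\right\Vert_{B}$, is a genuine catch of an inconsistency in the paper. With the induced norm $\left\Vert R\right\Vert_{B}=\max_{x\neq0}\left\Vert Rx\right\Vert_{B}/\left\Vert x\right\Vert_{B}$ that the paper itself writes at the start of its computation, the identity $B^{-1/2}R^{T}BRB^{-1/2}=S^{2}$ gives $\left\Vert R\right\Vert_{B}=\sqrt{\lambda_{\max}(S^{2})}=\left\Vert S\right\Vert\leq1-\varphi$, not $(1-\varphi)^{2}$; the paper lands on $(1-\varphi)^{2}$ only by silently identifying $\max_{y^{T}y\leq1}\sqrt{y^{T}S^{2}y}$ with $\left\Vert S^{2}\right\Vert$ rather than with its square root. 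Your resolution --- reading $\left\Vert R\right\Vert_{B}$ as the squared Rayleigh quotient $\sup_{x}x^{T}R^{T}BRx/x^{T}Bx$ --- recovers the stated constant, but note that Lemma \ref{lem:nu-sequence-bound} later uses $\left\Vert H\right\Vert_{D}$ submultiplicatively, via $\left\Vert H^{k}x\right\Vert_{D}\leq\left\Vert H\right\Vert_{D}^{k}\left\Vert x\right\Vert_{D}$, which requires the induced norm; under that reading the honest bound is $1-\varphi$ and the downstream constants change by a bounded factor. In short, your computation is the correct one; the discrepancy you noticed is real and lies in the lemma's statement rather than in your argument.
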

\begin{proof}
Recall the definition of the matrix norm: $\left\Vert S\right\Vert =\max_{x^{T}x\leq1}\sqrt{x^{T}Sx}$.
Since we know from Equation (\ref{eq:s-encadrement}) that $S\preceq\left(1-\varphi\right)I$,
we get the first inequality.

The second inequality is a consequence of Proposition 3.3 from \cite{Spielman2009a}:
$A$ and $B$ have the same nullspace and we have the linear matrix
inequality $A\preceq B\preceq\kappa A$, which implies that the eigenvalues
of $B^{-1}A$ lie between $\kappa^{-1}=\varphi$ and $1$. This implies
that the eigenvalues of $I-B^{-1}A$ are between $0$ and $1-\varphi$.

Recall the definition of the matrix norm induced by the $B$-norm
over $\mathbb{R}^{n}$: 
\begin{eqnarray*}
\left\Vert R\right\Vert _{B} & = & \max_{x\neq0}\frac{\left\Vert Rx\right\Vert _{B}}{\left\Vert x\right\Vert _{B}}\\
 & = & \max_{\left\Vert x\right\Vert _{B}^{2}\leq1}\sqrt{x^{T}R^{T}BRx}\\
 & = & \max_{x^{T}Bx\leq1}\sqrt{x^{T}R^{T}BRx}\\
 & = & \max_{y^{T}y\leq1}\sqrt{y^{T}B^{-1/2}R^{T}BRB^{-1/2}y}
\end{eqnarray*}
and the latter expression simplifies: 
\begin{eqnarray*}
B^{-1/2}R^{T}BRB^{-1/2} & = & B^{-1/2}\left(I-AB^{-1}\right)B\left(I-B^{-1}A\right)B^{-1/2}\\
 & = & \left(I-B^{-1/2}AB^{-1/2}\right)\left(I-B^{-1/2}AB^{-1/2}\right)\\
 & = & S^{2}
\end{eqnarray*}
so we get: 
\[
\left\Vert R\right\Vert _{B}=\left\Vert S^{2}\right\Vert \leq\left\Vert S\right\Vert ^{2}\leq\left(1-\varphi\right)^{2}
\]

\end{proof}
The approximation of the log-determinant is performed by computing
sequences of power series $\left(R^{k}x\right)_{k}$. These chains
are computed approximately by repeated applications of the $R$ operator
on the previous element of the chain, starting from a random variable
$x_{0}$. We formalize the notion of an approximate chain. 
\begin{definition}
\emph{Approximate power sequence. }Given a linear operator $H$, a
start point $x^{\left(0\right)}\in\mathbb{R}^{n}$, and a positive-definite
matrix $D$, we define an $\epsilon-$approximate power sequence as
a sequence that does not deviate too much from the power sequence:
\[
\left\Vert x^{\left(k+1\right)}-Hx^{\left(k\right)}\right\Vert _{D}\leq\epsilon\left\Vert Hx^{\left(k\right)}\right\Vert _{D}
\]

\end{definition}
We now prove the following result that is quite intuitive: if the
operator $H$ is a contraction and if the relative error $\epsilon$
is not too great, the sum of all the errors on the chain is bounded. 
\begin{lemma}
\label{lem:nu-sequence-bound}Let $H$ be a linear operator and $D$
a norm over the space of that linear operator. Assume that the operator
$H$ is a contraction under this norm ($\left\Vert H\right\Vert _{D}<1$)
and consider $\rho\in\left(0,1\right)$ so that $\left\Vert H\right\Vert _{D}\leq\left(1-\rho\right)^{2}$.
Consider $\left(x^{\left(k\right)}\right)_{k}$ a $\nu-$approximate
power sequence for the operator $H$ and the norm $D$. If $\rho\leq1/2$
and $\nu\leq\rho/2$, the total error is bounded: 
\[
\sum_{k=0}^{\infty}\left\Vert x^{\left(k\right)}-H^{k}x^{\left(0\right)}\right\Vert _{D}\leq4\rho^{-2}\nu\left\Vert x^{\left(0\right)}\right\Vert _{D}
\]
\end{lemma}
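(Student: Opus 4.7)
The plan is to peel off the one-step error, close a scalar recurrence for $a_k := \|x^{(k)} - H^k x^{(0)}\|_D$, sum the resulting geometric series, and finally plug in the hypotheses on $\rho$ and $\nu$ to absorb constants. Write $e_k := x^{(k)} - H^k x^{(0)}$ and $\delta_k := x^{(k+1)} - H x^{(k)}$, so that $\|\delta_k\|_D \leq \nu \|H x^{(k)}\|_D \leq \nu q \|x^{(k)}\|_D$ where $q := \|H\|_D \leq (1-\rho)^2$. A direct computation gives the error recurrence $e_{k+1} = H e_k + \delta_k$ with $e_0 = 0$, and hence by submultiplicativity
\[
\|e_{k+1}\|_D \leq q \|e_k\|_D + \nu q \|x^{(k)}\|_D.
\]

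The main obstacle, and the only subtle point, is that the per-step noise $\delta_k$ is measured against the \emph{true} iterate $x^{(k)}$ rather than the ideal one $H^k x^{(0)}$, so the bound on $\|e_k\|_D$ feeds back into itself. I would resolve this by the triangle inequality $\|x^{(k)}\|_D \leq q^k \|x^{(0)}\|_D + \|e_k\|_D$, which yields (after dividing by $\|x^{(0)}\|_D$ and writing $a_k := \|e_k\|_D / \|x^{(0)}\|_D$) the scalar recurrence
\[
a_{k+1} \leq q(1+\nu)\, a_k + \nu q^{k+1}, \qquad a_0 = 0.
\]
Setting $r := q(1+\nu)$, a routine induction (or unrolling as $a_k \leq \nu q \sum_{j=0}^{k-1} r^{k-1-j} q^j$ and using $r - q = q\nu$) gives the clean closed form $a_k \leq r^k - q^k$.

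Summing the two geometric series,
\[
\sum_{k=0}^{\infty} a_k \;\leq\; \frac{1}{1-r} - \frac{1}{1-q} \;=\; \frac{q\nu}{(1-r)(1-q)} \;\leq\; \frac{\nu}{(1-r)(1-q)}.
\]
It remains to quantify the two denominators using the hypotheses. Since $q \leq (1-\rho)^2 \leq 1 - \rho$, we have $1 - q \geq \rho$. For the other factor, $1 - r = (1-q) - q\nu \geq \rho - \nu \geq \rho - \rho/2 = \rho/2$ using $\nu \leq \rho/2$ and $q \leq 1$. Substituting gives $\sum_k \|e_k\|_D \leq (2\nu/\rho^2)\,\|x^{(0)}\|_D$, which is even a factor of two stronger than the claimed $4\rho^{-2}\nu\,\|x^{(0)}\|_D$; the factor $4$ in the statement is just convenient slack. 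The hypothesis $\rho \leq 1/2$ is used only to be sure that $(1-\rho)^2 \leq 1 - \rho$, so that the bound $1 - q \geq \rho$ holds cleanly.
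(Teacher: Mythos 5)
Your proof is correct and takes essentially the same route as the paper's: the same one-step error decomposition $e_{k+1}=He_k+\delta_k$, the same triangle-inequality feedback bound $\left\Vert x^{(k)}\right\Vert _{D}\leq q^{k}\left\Vert x^{(0)}\right\Vert _{D}+\left\Vert e_{k}\right\Vert _{D}$ to close the scalar recurrence, and a geometric summation at the end. The only divergence is cosmetic: you solve the recurrence exactly via $a_{k}\leq r^{k}-q^{k}$ rather than by the paper's induction with rate $\sqrt{1-\rho}$, which gives the slightly sharper constant $2\rho^{-2}$, and as a minor aside your closing remark is off in that $(1-\rho)^{2}\leq1-\rho$ holds for every $\rho\in(0,1)$, so your argument never actually uses the hypothesis $\rho\leq1/2$ (which is harmless, since you prove a stronger statement).
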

\begin{proof}
Call $\omega_{k}=\left\Vert x^{\left(k\right)}-H^{k}x^{\left(0\right)}\right\Vert _{D}$
and $\theta_{k}=\left\Vert Hx^{\left(k\right)}\right\Vert _{D}$.
We are going to bound the rate of convergence of these two series.
We have first using triangular inequality on the $D$ norm and then
the definition of the induced matrix norm. 
\begin{eqnarray*}
\theta_{k} & \leq & \left\Vert Hx^{\left(k\right)}-H^{k}x^{\left(0\right)}\right\Vert _{D}+\left\Vert H^{k}x^{\left(0\right)}\right\Vert _{D}\\
 & = & \omega_{k}+\left\Vert H^{k}x^{\left(0\right)}\right\Vert _{D}\\
 & \leq & \omega_{k}+\left\Vert H\right\Vert _{D}^{k}\left\Vert x^{\left(0\right)}\right\Vert _{D}
\end{eqnarray*}
We now bound the error on the $\omega_{k}$ sequence: 
\begin{eqnarray*}
\omega_{k+1} & = & \left\Vert x^{\left(k+1\right)}-Hx^{\left(k\right)}+Hx^{\left(k\right)}-H^{k+1}x^{\left(0\right)}\right\Vert _{D}\\
 & \leq & \left\Vert Hx^{\left(k\right)}-H^{k+1}x^{\left(0\right)}\right\Vert _{D}+\left\Vert x^{\left(k+1\right)}-Hx^{\left(k\right)}\right\Vert _{D}\\
 & \leq & \left\Vert H\right\Vert _{D}\left\Vert x^{\left(k\right)}-H^{k}x^{\left(0\right)}\right\Vert _{D}+\nu\left\Vert Hx^{\left(k\right)}\right\Vert _{D}\\
 & = & \left\Vert H\right\Vert _{D}\omega_{k}+\nu\theta_{k}\\
 & \leq & \left\Vert H\right\Vert _{D}\omega_{k}+\nu\left(\omega_{k}+\left\Vert H\right\Vert _{D}^{k}\left\Vert x^{\left(0\right)}\right\Vert _{D}\right)\\
 & \leq & \left[\left\Vert H\right\Vert _{D}+\nu\right]\omega_{k}+\nu\left\Vert H\right\Vert _{D}^{k}\left\Vert x^{\left(0\right)}\right\Vert _{D}
\end{eqnarray*}
The assumption $\rho\leq1-\sqrt{\left\Vert H\right\Vert _{D}}$ is
equivalent to $\left\Vert H\right\Vert _{D}\leq\left(1-\rho\right)^{2}$,
so the last inequality implies: 
\[
\omega_{k+1}\leq\left[\left(1-\rho\right)^{2}+\nu\right]\omega_{k}+\nu\left(1-\rho\right)^{2k}\left\Vert x^{\left(0\right)}\right\Vert _{D}
\]

Note that the inequality $\left(1-\rho\right)^{2}+\nu\leq1-\rho$
is equivalent to $\nu\leq\rho-\rho^{2}$. Using the hypothesis, this
implies: 
\begin{equation}
\omega_{k+1}\leq\left(1-\rho\right)\omega_{k}+\nu\left(1-\rho\right)^{2k}\left\Vert x^{\left(0\right)}\right\Vert _{D}\label{eq:anon1}
\end{equation}

We show by induction that: 
\[
\forall k,\omega_{k}\leq\frac{\nu\left\Vert x^{\left(0\right)}\right\Vert _{D}}{1-\sqrt{1-\rho}}\left(\sqrt{1-\rho}\right)^{k-1}
\]

Note first that 
\begin{align*}
\omega_{1} & =\left\Vert x^{\left(1\right)}-Hx^{\left(0\right)}\right\Vert _{D}\\
 & \leq\nu\left\Vert Hx^{\left(0\right)}\right\Vert _{D}\\
 & \leq\nu\left\Vert H\right\Vert _{D}\left\Vert x^{\left(0\right)}\right\Vert _{D}\\
 & \leq\nu\left(1-\rho\right)^{2}\left\Vert x^{\left(0\right)}\right\Vert _{D}\\
 & \leq\nu\left\Vert x^{\left(0\right)}\right\Vert _{D}
\end{align*}
So this relation is verified for $k=1$. Now, assuming it is true
for $k$, we use Equation (\ref{eq:anon1}) to see that: 
\begin{eqnarray*}
\omega_{k} & \leq & \left(1-\rho\right)\omega_{k}+\nu\left(1-\rho\right)^{2k}\left\Vert x^{\left(0\right)}\right\Vert _{D}\\
 & \leq & \left(1-\rho\right)\omega_{k}+\nu\left(\sqrt{1-\rho}\right)^{k}\left\Vert x^{\left(0\right)}\right\Vert _{D}\\
 & \leq & \nu\left\Vert x^{\left(0\right)}\right\Vert _{D}\left[\frac{\left(1-\rho\right)}{1-\sqrt{1-\rho}}\left(\sqrt{1-\rho}\right)^{k-1}+\left(\sqrt{1-\rho}\right)^{k}\right]\\
 & = & \nu\left\Vert x^{\left(0\right)}\right\Vert _{D}\left(\sqrt{1-\rho}\right)^{k}\left[\frac{\sqrt{1-\rho}}{1-\sqrt{1-\rho}}+1\right]\\
 & = & \frac{\nu\left\Vert x^{\left(0\right)}\right\Vert _{D}}{1-\sqrt{1-\rho}}\left(\sqrt{1-\rho}\right)^{k}
\end{eqnarray*}
which is the the property for $k+1$. Using this property, we can
sum all the errors by a geometric series (note that $\omega_{0}=0$).
\[
\sum_{k=1}^{\infty}\omega_{k}\leq\frac{\nu\left\Vert x^{\left(0\right)}\right\Vert _{D}}{1-\sqrt{1-\rho}}\sum_{k=0}^{\infty}\left(\sqrt{1-\rho}\right)^{k}=\frac{\nu\left\Vert x^{\left(0\right)}\right\Vert _{D}}{\left(1-\sqrt{1-\rho}\right)^{2}}
\]

Finally, note that for $\rho\in\left(0,1/2\right)$, the inequality
$\nu\leq\rho/2$ implies $\nu\leq\rho-\rho^{2}$. Furthermore, by
concavity of the square root function, we have $\sqrt{1-\rho}\leq1-\rho/2$
for $\rho\leq1$. Thus, $\left(1-\sqrt{1-\rho}\right)^{2}\geq\rho^{2}/4$
and we get our result. 
\end{proof}
We can use the bound on the norm of $A$ to compute bound the error
with a preconditioner: 
\begin{lemma}
\label{lem:partial-sequence-approximate}Consider $A,B$ with the
same hypothesis as above, $x_{0}\in\mathbb{R}^{n}$, and the additional
hypothesis $\nu\in\left(0,\frac{1}{2\kappa}\right)$ and $\kappa\ge2$,
and $\left(x_{u}\right)_{u}$ an $\nu-$approximate power sequence
for the operator $R$ with start vector $x_{0}$. Then: 
\[
\left|\sum_{i=1}^{l}\frac{1}{i}x_{0}^{T}R^{i}x_{0}-\sum_{i=1}^{l}\frac{1}{i}x_{0}^{T}x_{i}\right|\leq4\nu\kappa^{2}\sqrt{\kappa\left(B\right)}\left\Vert x_{0}\right\Vert ^{2}
\]
where $\kappa\left(B\right)$ is the condition number of $B$.\end{lemma}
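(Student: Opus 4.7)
The plan is to bootstrap the two preceding lemmas: Lemma \ref{lem:S-R-contractions} gives a contraction bound for $R$ in the $B$-norm, and Lemma \ref{lem:nu-sequence-bound} turns that contraction into a bound on the cumulative error of an approximate power sequence. Specifically, I would take $H=R$, $D=B$, and $\rho=\varphi=\kappa^{-1}$ in Lemma \ref{lem:nu-sequence-bound}: the contraction hypothesis $\|R\|_B\leq(1-\rho)^2$ is exactly Lemma \ref{lem:S-R-contractions}, while $\kappa\geq 2$ gives $\rho\leq 1/2$ and the hypothesis $\nu\leq 1/(2\kappa)=\rho/2$ is precisely what is assumed. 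Thus
\[
\sum_{k=0}^{\infty}\|x_k-R^k x_0\|_B \;\leq\; 4\rho^{-2}\nu\,\|x_0\|_B \;=\; 4\kappa^{2}\nu\,\|x_0\|_B.
\]

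Next I would convert this $B$-norm estimate on the residuals into an ordinary inner-product estimate against $x_0$. By Cauchy--Schwarz applied to $B^{1/2}$ and $B^{-1/2}$ factors,
\[
\bigl|x_0^T(R^i x_0 - x_i)\bigr| \;\leq\; \|x_0\|_{B^{-1}}\,\|R^i x_0-x_i\|_B.
\]
Using the spectral bounds $\|x_0\|_{B^{-1}}\leq\lambda_{\min}(B)^{-1/2}\|x_0\|$ and $\|x_0\|_B\leq\lambda_{\max}(B)^{1/2}\|x_0\|$, and noting $1/i\leq 1$, I would sum over $i=1,\dots,l$:
\[
\left|\sum_{i=1}^{l}\frac{1}{i}\,x_0^T\bigl(R^i x_0-x_i\bigr)\right|
\;\leq\; \|x_0\|_{B^{-1}}\sum_{i=1}^{l}\|R^i x_0-x_i\|_B
\;\leq\; \frac{\|x_0\|}{\sqrt{\lambda_{\min}(B)}}\cdot 4\kappa^{2}\nu\,\sqrt{\lambda_{\max}(B)}\,\|x_0\|,
\]
and the product of spectral factors is exactly $\sqrt{\kappa(B)}$, yielding the claimed bound $4\nu\kappa^{2}\sqrt{\kappa(B)}\,\|x_0\|^2$.

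The only subtle step is step two, aligning the hypotheses on $\rho$ and $\nu$ so that Lemma \ref{lem:nu-sequence-bound} applies cleanly; the rest is Cauchy--Schwarz plus passing between the Euclidean norm and the $B$-norm. I do not expect any serious obstacle, since the approximate-power-sequence definition is stated with respect to the $B$-norm and the contraction estimate for $R$ in Lemma \ref{lem:S-R-contractions} is also in the $B$-norm, so the two pieces plug together directly.
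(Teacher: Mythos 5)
Your proposal is correct and follows essentially the same route as the paper's own proof: invoke Lemma \ref{lem:S-R-contractions} to get $\left\Vert R\right\Vert _{B}\leq\left(1-\kappa^{-1}\right)^{2}$, apply Lemma \ref{lem:nu-sequence-bound} with $\rho=\kappa^{-1}$ to bound $\sum_{i}\left\Vert R^{i}x_{0}-x_{i}\right\Vert _{B}$ by $4\kappa^{2}\nu\left\Vert x_{0}\right\Vert _{B}$, then use Cauchy--Schwarz in the $B$-inner product (your $\left\Vert x_{0}\right\Vert _{B^{-1}}$ is the paper's $\left\Vert B^{-1}x_{0}\right\Vert _{B}$) and the extremal eigenvalues of $B$ to pass to the Euclidean norm and pick up the factor $\sqrt{\kappa\left(B\right)}$. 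No gaps.
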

\begin{proof}
Call $\hat{z}$ the truncated sequence: 
\[
\hat{z}=\sum_{i=1}^{l}\frac{1}{i}x_{0}^{T}x_{i}
\]
This sequence is an approximation of the exact sequence $z$: 
\[
z=\sum_{i=1}^{l}\frac{1}{i}x_{0}^{T}R^{i}x_{0}
\]
We now bound the error between the two sequences: 
\begin{equation}
\left|\hat{z}-z\right|\leq\sum_{i=1}^{l}\frac{1}{i}\left|x_{0}^{T}\left(R^{i}x_{0}-x_{i}\right)\right|\leq\sum_{i=1}^{l}\left|x_{0}^{T}\left(R^{i}x_{0}-x_{i}\right)\right|\leq\sum_{i=1}^{l}\left|\left(B^{-1}x_{0}\right)^{T}B\left(R^{i}x_{0}-x_{i}\right)\right|\label{eq:anon}
\end{equation}
Using the Cauchy-Schwartz inequality, we obtain: 
\begin{equation}
\left|\left(B^{-1}x_{0}\right)^{T}B\left(R^{i}x_{0}-x_{i}\right)\right|=\left|\left\langle B^{-1}x_{0},R^{i}x_{0}-x_{i}\right\rangle _{B}\right|\leq\left\Vert B^{-1}x_{0}\right\Vert _{B}\left\Vert R^{i}x_{0}-x_{i}\right\Vert _{B}\label{eq:anon-1}
\end{equation}
From Lemma (\ref{lem:S-R-contractions}), we have $\left\Vert R\right\Vert _{B}\leq\left(1-\varphi\right)^{2}$,
and from the hypothesis, we have $\nu\in\left(0,\varphi/2\right)$
and $\varphi\le1/2$, so we can bound the deviation using the bound
from Lemma \ref{lem:nu-sequence-bound}: 
\begin{equation}
\sum_{i=1}^{l}\left\Vert R^{i}x_{0}-x_{i}\right\Vert _{B}\leq\sum_{i=1}^{\infty}\left\Vert R^{i}x_{0}-x_{i}\right\Vert _{B}\leq4\varphi^{-2}\nu\left\Vert x_{0}\right\Vert _{B}=4\kappa^{2}\nu\left\Vert x_{0}\right\Vert _{B}\label{eq:anon-2}
\end{equation}
Combining Equations (\ref{eq:anon}), (\ref{eq:anon-1}) and (\ref{eq:anon-2}),
we get: 
\[
\left|\hat{z}-z\right|\leq\left\Vert B^{-1}x_{0}\right\Vert _{B}\sum_{i=1}^{l}\left\Vert R^{i}x_{0}-x_{i}\right\Vert _{B}\leq4\nu\kappa^{2}\left\Vert B^{-1}x_{0}\right\Vert _{B}\left\Vert x_{0}\right\Vert _{B}
\]
Finally, it is more convenient to consider the Euclidian norm for
the norm of $x_{0}$. Call $\lambda_{\text{max }}$ and $\lambda_{\text{min }}$
the extremal eigenvalues of the positive semidefinite matrix $B$.
By definition of the matrix norm: $\left\Vert x_{0}\right\Vert _{B}=\sqrt{x_{0}^{T}Bx_{0}}\leq\sqrt{\lambda_{\text{max}}}\left\Vert x_{0}\right\Vert $
and $\left\Vert B^{-1}x_{0}\right\Vert _{B}=\sqrt{x_{0}^{T}B^{-1}x_{0}}\leq\sqrt{\lambda_{\text{min}}^{-1}}\left\Vert x_{0}\right\Vert $
so we get: 
\[
\left|\hat{z}-z\right|\leq4\nu\kappa^{2}\sqrt{\kappa\left(B\right)}\left\Vert x_{0}\right\Vert ^{2}
\]
where $\kappa\left(B\right)$ is the condition number of $B$. 
\end{proof}
We now have all the elements required for the proof of Theorem \ref{thm:preconditioning-approx}. 
\begin{proof}
Consider $\mathbf{u}_{j}\sim\mathcal{N}\left(0,I_{n}\right)$ for
$j=1\cdots p$, and $x_{i,j}=\begin{cases}
\mathbf{u}_{j}/\left\Vert \mathbf{u}_{j}\right\Vert  & \,\, i=0\\
x_{i-1,j}-C\left(Ax_{i-1,j}\right) & \,\, i>0
\end{cases}$

Call 
\[
z_{p,l}=\frac{1}{p}\sum_{j=1}^{p}\sum_{i=1}^{l}\frac{1}{i}\left(x_{0,j}\right)^{T}x_{i,j}
\]
\[
\hat{y}_{p,l}=\frac{1}{p}\sum_{j=1}^{p}\sum_{k=1}^{l}\frac{1}{k}\left(x_{0,j}\right)^{T}S^{k}x_{0,j}
\]

By construction, $\left(x_{i,j}\right)_{i}$ is an $\nu-$approximate
chain for the operator $R$. Applying Lemma \ref{lem:partial-sequence-approximate}
to the operator $R$ under the norm $B$, we get: 
\begin{eqnarray*}
\left|z_{p,l}-\hat{y}_{p,l}\right| & \leq & 4\nu\kappa^{2}\sqrt{\kappa\left(B\right)}\left[\frac{1}{p}\sum_{j=1}^{p}\left\Vert x_{0,j}\right\Vert ^{2}\right]=4\nu\kappa^{2}\sqrt{\kappa\left(B\right)}
\end{eqnarray*}

since $\left\Vert x_{0,j}\right\Vert ^{2}=1$, which gives us a deterministic
bound. Consider $\nu\leq\min\left(\frac{\epsilon}{8\kappa^{2}\sqrt{\kappa\left(B\right)}},\frac{1}{2\kappa}\right)$.
Then $\left|z_{p,l}-\hat{y}_{p,l}\right|\leq\epsilon/2$. Furthermore:
\[
\left|z_{p,l}-y\right|\leq\left|z_{p,l}-\hat{y}_{p,l}\right|+\left|y-\hat{y}_{p,l}\right|
\]
and $\mathbb{P}\left[\left|y-\hat{y}_{p,l}\right|\geq\epsilon/2\right]\leq\eta$
for a choice of $p\geq16\left(\frac{1}{\epsilon}+\frac{1}{n\epsilon^{2}}\right)\log\left(2/\eta\right)\log^{2}\left(\delta^{-1}\right)$
and $l\geq4\kappa\log\left(\frac{n}{\delta\epsilon}\right)$. Hence,
we get our bound result of 
\[
pl=64\kappa\left(\frac{1}{\epsilon}+\frac{1}{n\epsilon^{2}}\right)\log\left(2/\eta\right)\log^{2}\left(\delta^{-1}\right)\log\left(\frac{n}{\delta\epsilon}\right)
\]
\end{proof}

\section*{Appendix B: Proofs of Section \ref{sec:Making-the-problem}}

.\\

We put here the proofs that pertain to Section \ref{sec:Making-the-problem}.

\subsection{Properties of the generalized Laplacian}

Proof of Lemma \ref{lem:floating-properties}. 
\begin{proof}
The first statement is obvious from the construction of the grounded
Laplacian.

Statement (2) is a direct consequence of the fact that $F_{Z}=PZP^{T}$
with $P=\left(I_{n}\,0\right)$.

Then the third statement is a simple consequence of statement 2, as
$\text{ld}\left(Z\right)=\sum_{i}\log\lambda_{i}$ with $\left(\lambda_{i}\right)_{i}$
the $n-1$ positive eigenvalues of $Z$.

Statement (4) is straightforward after observing that the floating
procedure is a linear transform from $\mathcal{S}_{n}$ to $\mathcal{S}_{n-1}$,
so it preserves the matrix inequalities. 
\end{proof}

\subsection{Technical lemmas for Theorem \ref{thm:ultra_main}}

This lemma generalizes Lemma 8.1 in \cite{Spielman2009a}. 
\begin{lemma}
\label{lem:simple-inequality}Consider $A\in\mathcal{S}_{n}$ positive
semi-definite, and $x\in\mathbb{R}^{n}$. Then $xx^{T}\preceq\left(x^{T}A^{+}x\right)A$\end{lemma}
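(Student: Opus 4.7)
The plan is to reduce the claimed matrix inequality to a scalar Cauchy–Schwarz inequality on the inner product induced by $A$. Concretely, $xx^{T}\preceq (x^{T}A^{+}x)\,A$ is equivalent to showing that for every $y\in\mathbb{R}^{n}$,
\[
(y^{T}x)^{2}\;\leq\;(x^{T}A^{+}x)\,(y^{T}Ay).
\]
So the whole argument reduces to proving this single scalar inequality.

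To establish it, I would factor $A=A^{1/2}A^{1/2}$ using the symmetric PSD square root, and write $x=A^{1/2}u$ with $u=(A^{1/2})^{+}x$. (This is where the implicit assumption $x\in\operatorname{range}(A)$ is used; in every application of the lemma in this paper $x$ has the form $\mathcal{X}_{u}-\mathcal{X}_{v}$, which is orthogonal to the all-ones null vector of the connected Laplacian, so the hypothesis is automatically satisfied.) Choosing $u\in\operatorname{range}(A^{1/2})$ we get $x^{T}A^{+}x=u^{T}\bigl(A^{1/2}A^{+}A^{1/2}\bigr)u=u^{T}u=\|u\|^{2}$, since $A^{1/2}A^{+}A^{1/2}$ is the orthogonal projector onto $\operatorname{range}(A)$.

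Then $y^{T}x=y^{T}A^{1/2}u=(A^{1/2}y)^{T}u$, and a direct application of the standard Cauchy–Schwarz inequality in $\mathbb{R}^{n}$ yields
\[
(y^{T}x)^{2}=\bigl((A^{1/2}y)^{T}u\bigr)^{2}\;\leq\;\|A^{1/2}y\|^{2}\,\|u\|^{2}\;=\;(y^{T}Ay)(x^{T}A^{+}x),
\]
which is exactly what is needed. An alternative, equally short route is a Schur-complement argument on the $(n+1)\times(n+1)$ block matrix $\bigl(\begin{smallmatrix}A & x\\ x^{T} & x^{T}A^{+}x\end{smallmatrix}\bigr)$: one checks $AA^{+}x=x$ (using $x\in\operatorname{range}(A)$), so the Schur complement of $A$ vanishes, the block matrix is PSD, and positivity of the quadratic form $y^{T}Ay+2\alpha y^{T}x+\alpha^{2}x^{T}A^{+}x$ in $\alpha$ gives the same discriminant inequality.

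There is essentially no obstacle in the proof beyond being careful with the pseudo-inverse on the nullspace of $A$; the only thing to verify cleanly is that $x^{T}A^{+}x$ correctly captures $\|u\|^{2}$ via the identity $A^{1/2}A^{+}A^{1/2}=P_{\operatorname{range}(A)}$, which follows from the Moore–Penrose definition. The statement when $A$ is invertible is the classical Cauchy–Schwarz for the $A^{-1}$-inner product; the only generalization here is handling the (at most one-dimensional, in the Laplacian case) nullspace via the pseudo-inverse.
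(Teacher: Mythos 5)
Your proof is correct, and it takes a genuinely different route from the paper's. The paper diagonalizes $A=\sum_i\lambda_i u_iu_i^T$ and asserts that it suffices to verify $\left(u_i^Tx\right)^2\leq\left(x^TA^+x\right)u_i^TAu_i$ for each eigenvector $u_i$, which it then checks by expanding $x^TA^+x=\sum_j\lambda_j^{-1}\left(u_j^Tx\right)^2$ (the sum implicitly over positive eigenvalues). You instead test the inequality against an arbitrary vector $y$, reducing it to the scalar bound $\left(y^Tx\right)^2\leq\left(x^TA^+x\right)\left(y^TAy\right)$ and proving that by factoring through $A^{1/2}$ and applying ordinary Cauchy--Schwarz, i.e.\ the generalized Cauchy--Schwarz inequality for the $A$-semi-inner product. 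Your version buys two things. First, it is logically tighter: checking a quadratic-form inequality only on the eigenvectors of $A$ does not in general imply the matrix inequality for all test vectors, so the paper's reduction step, as written, is a gap; your reduction to arbitrary $y$ is exactly equivalent to $xx^T\preceq\left(x^TA^+x\right)A$, and carrying the paper's computation out against a general $y=\sum_i c_iu_i$ rather than a single $u_i$ would amount to precisely the Cauchy--Schwarz step you perform. Second, you make explicit the hypothesis $x\in\operatorname{range}\left(A\right)$, which is genuinely needed (if $x$ has a nullspace component the right-hand side can vanish while $xx^T\neq0$) and which holds in the paper's application, where $x=\mathcal{X}_u-\mathcal{X}_v$ is orthogonal to $\mathbf{1}$ and $A=L_H$ is the Laplacian of a connected graph; the paper leaves this implicit. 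Your alternative Schur-complement argument is equally valid, again relying on $AA^+x=x$.
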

\begin{proof}
Without loss of generality, consider $x^{T}x=1$. Consider the eigenvalue
decomposition of $A$: $A=\sum_{i}\lambda_{i}u_{i}u_{i}^{T}$. Since
$\left(u_{i}\right)_{i}$ is an orthonormal basis of $\mathbb{R}^{n}$,
we only need to establish that $\left(u_{i}^{T}x\right)^{2}\leq\left(x^{T}A^{+}x\right)u_{i}^{T}Au_{i}$
for all $i$. The latter term can be simplified: 
\begin{eqnarray*}
\left(x^{T}A^{+}x\right)u_{i}^{T}Au_{i} & = & \left(x^{T}\left[\sum_{j}\lambda_{j}^{-1}u_{j}u_{j}^{T}\right]x\right)\lambda_{i}\\
 & = & \lambda_{i}\sum_{j}\lambda_{j}^{-1}\left(u_{j}^{T}x\right)^{2}\\
 & \geq & \left(u_{i}^{T}x\right)^{2}
\end{eqnarray*}
which is the inequality we wanted.\end{proof}
\begin{lemma}
\label{lem:Jensen-inequality-matrix-logarithm-1}Jensen inequality
for the matrix logarithm. Let $A\in\mathcal{S}_{n}$ be a positive
semi-definite matrix with $p$ positive eigenvalues. Then 
\[
\text{ld}\left(A\right)\leq p\log\left(\frac{\text{Tr}\left(A\right)}{p}\right)
\]
\end{lemma}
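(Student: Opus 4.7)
The plan is to apply the classical Jensen inequality (for the concave scalar function $\log$) directly to the $p$ positive eigenvalues of $A$, so the proof reduces to a one-line computation once the trace is rewritten in terms of those eigenvalues.

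First I would diagonalize: since $A$ is symmetric positive semi-definite, write its eigenvalues as $\lambda_1,\dots,\lambda_n$ with exactly $p$ of them strictly positive, say $\lambda_{i_1},\dots,\lambda_{i_p}>0$, and the remaining $n-p$ eigenvalues equal to $0$. Then by definition of the pseudo-log-determinant,
\[
\mathrm{ld}(A)=\sum_{k=1}^{p}\log\lambda_{i_k},
\]
while the trace satisfies $\mathrm{Tr}(A)=\sum_{i}\lambda_i=\sum_{k=1}^{p}\lambda_{i_k}$, because the zero eigenvalues contribute nothing.

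Next I would invoke concavity of the logarithm. Jensen's inequality applied to the uniform probability measure on the $p$ positive eigenvalues gives
\[
\frac{1}{p}\sum_{k=1}^{p}\log\lambda_{i_k}\;\le\;\log\!\left(\frac{1}{p}\sum_{k=1}^{p}\lambda_{i_k}\right)=\log\!\left(\frac{\mathrm{Tr}(A)}{p}\right).
\]
Multiplying by $p$ yields exactly $\mathrm{ld}(A)\le p\log(\mathrm{Tr}(A)/p)$, which is the desired bound.

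There is no real obstacle: the only step that requires a moment of care is identifying $\mathrm{Tr}(A)$ with the sum of the positive eigenvalues, which is immediate from positive semi-definiteness. The proof is essentially a direct translation of scalar Jensen to the spectrum of $A$.
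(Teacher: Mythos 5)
Your proof is correct and follows essentially the same route as the paper: apply scalar Jensen (concavity of $\log$) to the $p$ positive eigenvalues and identify their sum with $\mathrm{Tr}(A)$ via positive semi-definiteness. No gaps.
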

\begin{proof}
This is a direct application of Jensen's inequality. Call $\left(\lambda_{i}\right)_{i}$
the positive eigenvalues of $A$. Then $\text{ld}\left(A\right)=\sum_{i}\log\lambda_{i}$.
By concavity of the logarithm: 
\[
\sum_{i}\log\lambda_{i}\leq p\log\left(\frac{\sum\lambda_{i}}{p}\right)=p\log\left(\frac{\text{Tr}\left(A\right)}{p}\right)
\]
\end{proof}

\bibliographystyle{plain}
\bibliography{2012_sparse_info}

\end{document}